\theoremstyle{plain}
	\newtheorem{thm}{Theorem}[section]
	\newtheorem{prp}[thm]{Proposition}
	\newtheorem{lem}[thm]{Lemma}
	\newtheorem{cor}[thm]{Corollary}
\theoremstyle{definition}
	\newtheorem{dfn}[thm]{Definition}
	\newtheorem{ex}[thm]{Example}
\theoremstyle{remark}
	\newtheorem{rem}[thm]{Remark}
\newcommand{ \Z}{\mathbb{Z}}
\newcommand{ \Map}{{\rm Map}\hspace{0.1em}}
\newcommand{ \ad}{\mathrm{ad}\hspace{0.1em}}
\newcommand{ \SO}{\mathrm{SO}}
\newcommand{ \SU}{\mathrm{SU}}
\newcommand{ \Sp}{\mathrm{Sp}}
\begin{document}
\title{Homotopy pullback of $A_n$-spaces and its applications to $A_n$-types of gauge groups}
\author{Mitsunobu Tsutaya}
\address{Department of Mathematics, Kyoto University, Kyoto 606-8502, Japan}
\email{tsutaya@math.kyoto-u.ac.jp}
\keywords{homotopy pullback, $A_n$-space, $A_n$-map, fiberwise $A_n$-space,  gauge group, mapping space}
\subjclass[2010]{54C35 (primary), 18D50, 55P45, 55R10 (secondary)}
\begin{abstract}
We construct the homotopy pullback of $A_n$-spaces and show some universal property of it.
As the first application, we review the Zabrodsky's result which states that for each prime $p$, there is a finite CW complex which admits an $A_{p-1}$-form but no $A_p$-form.
As the second application, we investigate $A_n$-types of gauge groups.
In particular, we give a new result on $A_n$-types of the gauge groups of principal $\SU(2)$-bundles over $S^4$, which is a complete classification when they are localized away from 2.
\end{abstract}
\date{}
\maketitle

\section{Introduction}
The concept of \textit{$A_n$-space} was first introduced by Stasheff in \cite{Sta63a} as an $H$-space with higher homotopy associativity.
In \cite{Sta63b}, he also defined morphism between them, called \textit{$A_n$-homomorphism}, but, as pointed out by himself, it is too restrictive class.
Later, more general morphism between them, called \textit{$A_n$-map}, were formulated by Boardaman and Vogt \cite{BV73} and by Iwase \cite{Iwa83} (Iwase's construction can also be found in \cite{IM89}).

Before their formulation of $A_n$-map, to construct mixing of $A_n$-types, ``homotopy pullback of $A_n$-maps'' was considered by Zabrodsky \cite{Zab70} and by Mimura, Nishida and Toda \cite{MNT71} in certain sense.
Iwase and Mimura \cite{IM89} gave an outline of the proof of that the homotopy pullback of $A_n$-maps becomes an $A_n$-space, using the $A_n$-structures.

The first aim of this paper is to construct the homotopy pullback of $A_n$-spaces by giving an $A_n$-form on the usual homotopy pullback of spaces.
As pointed above, this is not a new result but a new realization.
We will consider the homotopy pullback of $A_n$-spaces by $A_n$-homomorphisms first (Section \ref{pullback1}).
After that, we consider the homotopy pullback by general $A_n$-maps (Section \ref{pullback2}).
As an application, we revisit the result of Zabrodsky \cite{Zab70}.

We will also investigate $A_n$-types of gauge groups.
For a principal $G$-bundle $P$, the \textit{gauge group} $\mathcal{G}(P)$ is the topological group consisting of self-isomorphisms on $P$. 
If principal $G$-bundles $P$ and $P'$ are isomorphic, then the gauge groups $\mathcal{G}(P)$ and $\mathcal{G}(P')$ are isomorphic as topological groups.
Considering the converse of this statement, if we replace `isomorphic as topological groups' by `homotopy equivalent as topological spaces', then it becomes much far from true.
This was first pointed out by Kono's result \cite{Kon91}.
As is well known, the principal $\SU(2)$-bundles over the four dimensional sphere $S^4$ is classified by the homotopy group $\pi _4(B\SU(2))\cong \mathbb{Z}$ of the classifying space.
Kono's result says that there are only 6 homotopy types of the gauge groups of them.

More generally, for a compact connected Lie group $G$ and a finite CW complex $B$, Crabb and Sutherland \cite{CS00} showed that the number of homotopy types of the gauge groups of principal $G$-bundles over $B$ is finite.
This finiteness holds not only for homotopy types but also for $H$-types (i.e. equivalence classes as $H$-spaces).

Kishimoto and Kono \cite{KK10} gave the condition for that the adjoint group bundle $\ad P$ (see Section \ref{gauge}) is trivial as a fiberwise $A_n$-space.
We note that the space of sections $\varGamma (\ad P)$ of $\ad P$ is naturally isomorphic to the gauge group $\mathcal{G}(P)$.

The author \cite{Tsu12a} generalized the result of Crabb and Sutherland for `$A_n$-equivalence types' with $n<\infty$ (the case when $n=2$ had been already known by them).
He \cite{Tsu12a}, \cite{Tsu12b} also considered the classification of $A_n$-types of gauge groups of principal $\SU (2)$-bundles over $S^4$.

Though there are many complete results on classifications of gauge groups, for example, \cite{Kon91}, \cite{HK06}, \cite{HK07}, \cite{KKKT07}, \cite{HKK08}, most of them were shown by observing the order of the Samelson product and using \cite[Lemma 3.2]{HK06}, which states that in some good situation, two gauge groups are homotopy equivalent if their $p$-localizations are homotopy equivalent for each prime $p$.
The second aim of this paper is to generalize this statement for $A_n$-types of gauge groups as follows (Section \ref{gauge}).

\begin{thm}\label{thm}
Let $G$ be a compact connected Lie group, of which the rationalization $G_{(0)}$ is homotopy equivalent to the product $S^{2n_1-1}_{(0)}\times \cdots \times S^{2n_{\ell }-1}_{(0)}$ of rationalized spheres.
Fix a map $\epsilon :S^{r-1}\to G$ with $r\geq 2n_\ell $.
For an integer $k\in \mathbb{Z}$, denote the principal $G$-bundle over $S^r$ with classifying map $k\epsilon :S^{r-1}\to G$ by $P_k$.
Take an integer $N\in \mathbb{Z}$ such that the adjoint group bundle $\ad P_N$ is $A_n$-trivial.
Then, the gauge groups $\mathcal{G}(P_k)$ and $\mathcal{G}(P_{k'})$ are $A_n$-equivalent if $(N,k)=(N,k')$, where $(a,b)$ represents the greatest common divisor of $a$ and $b$.
\end{thm}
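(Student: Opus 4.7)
The plan is to realize each gauge group $\mathcal{G}(P_k)$ as a homotopy pullback of $A_n$-spaces via the clutching construction, and then to analyze how this pullback depends on $k$. Identifying $\mathcal{G}(P_k)$ with the section space $\Gamma(\ad P_k)$ and decomposing $S^r = D^r_+ \cup_{S^{r-1}} D^r_-$, the clutching description of $\ad P_k$ gives
\[
\mathcal{G}(P_k) \simeq \map(D^r_+, G) \times^h_{\map(S^{r-1}, G)} \map(D^r_-, G),
\]
where one restriction is twisted by $\Ad(k\epsilon) \colon S^{r-1} \to \aut G$. By the constructions in Sections \ref{pullback1}--\ref{pullback2}, this is a homotopy pullback of $A_n$-spaces, and its $A_n$-equivalence class depends only on the class $[\Ad(k\epsilon)] \in [S^{r-1}, \aut_{A_n} G]$, where $\aut_{A_n} G$ denotes the monoid of $A_n$-self-equivalences of $G$.

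Setting $\alpha := [\Ad \circ \epsilon] \in \pi_{r-1}(\aut_{A_n} G)$, the $A_n$-triviality of $\ad P_N$ translates to $N\alpha = 0$, so the order $M$ of $\alpha$ divides $N$. A short divisibility argument (using $M \mid N$) then shows that $(N,k) = (N,k')$ forces $(M,k) = (M,k')$, so that $k\alpha$ and $k'\alpha$ generate the same cyclic subgroup of $\pi_{r-1}(\aut_{A_n} G)$; equivalently, $k' \equiv uk \pmod M$ for some $u$ coprime to $M/(M,k)$. Since $k\alpha$ is unchanged by shifts of $k$ by multiples of $M$, the problem reduces to producing an $A_n$-equivalence $\mathcal{G}(P_k) \simeq \mathcal{G}(P_{uk})$ for such a $u$.

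To handle this unit-multiplication step, I would exploit the hypothesis $G_{(0)} \simeq S^{2n_1-1}_{(0)} \times \cdots \times S^{2n_\ell-1}_{(0)}$ and the stable-range bound $r \geq 2n_\ell$ to produce a self-$A_n$-equivalence $\psi_u \colon G \to G$ whose induced action on $\pi_{r-1}(\aut_{A_n} G)$ is multiplication by $u$. Composing the clutching with $\psi_u$ supplies an $A_n$-equivalence between the homotopy pullback diagrams defining $\mathcal{G}(P_k)$ and $\mathcal{G}(P_{uk})$, and hence between the gauge groups themselves.

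The main obstacle I anticipate is the construction of $\psi_u$ as a \emph{genuine} self-$A_n$-equivalence realizing the prescribed multiplication on adjoint classes; this is the $A_n$-refinement of the arithmetic mixing underlying \cite[Lemma~3.2]{HK06}, and the structural hypotheses on $G_{(0)}$ and on $r$ are tailored precisely to make this construction available. The first two steps, by contrast, should follow fairly directly from the $A_n$-pullback machinery developed earlier in the paper once the clutching data is packaged as a diagram of $A_n$-spaces.
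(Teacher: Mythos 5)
Your first two steps match the paper's strategy: the gauge group is realized as the (homotopy) pullback of $\Map(D^r,G)\to\Map(S^{r-1},G;0)\xleftarrow{\alpha_k}G$ coming from the clutching decomposition of $S^r$, the $A_n$-pullback machinery of Sections \ref{pullback1}--\ref{pullback2} controls this, and the arithmetic reduction via $A$ coprime to $N$ with $Ak\equiv k'\bmod N$ (your $u$) is exactly what the paper does, using the framed classification theorem to see that $\ad P_{Ak}$ and $\ad P_{k'}$ are framed fiberwise $A_n$-equivalent.

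The gap is in the step you flag as the ``main obstacle,'' and it is not merely technical: an integral self-$A_n$-equivalence $\psi_u:G\to G$ inducing multiplication by a unit $u\not\equiv\pm1$ on the relevant homotopy group does not exist in general. Already for $G=\SU(2)$ every self-homotopy-equivalence has degree $\pm1$ on $\pi_3$, so no such $\psi_u$ is available, and the hypotheses $G_{(0)}\simeq S^{2n_1-1}_{(0)}\times\cdots\times S^{2n_\ell-1}_{(0)}$ and $r\geq 2n_\ell$ do not rescue this --- they play a different role. The paper's resolution is to localize: it splits the primes into $\mathcal{P}$ (dividing $N$) and $\mathcal{P}'$ (the rest), replaces $G$ by the strict pullback $\hat G$ of topological groups $G_{\mathcal{P}}\to G_{(0)}\leftarrow G_{\mathcal{P}'}$, and realizes ``multiplication by $A$'' not as a self-map of $G$ but as the degree-$A$ self-map of the \emph{source sphere}, i.e.\ $A^*:\Map(S^{r-1},G_{\mathcal{P}};0)\to\Map(S^{r-1},G_{\mathcal{P}};0)$, which is an $A_\infty$-equivalence precisely because $A$ is invertible at the primes of $\mathcal{P}$ (note $\alpha_{Ak}=A^*\circ\alpha_k$). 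At the primes of $\mathcal{P}'$ the adjoint bundles are already $A_n$-trivial, giving an equivalence $F'$ there. The hypotheses on $G_{(0)}$ and $r\geq 2n_\ell$ are then used to make the evaluation $\Map(S^{r-1},G_{(0)};0)\to G_{(0)}$ a homotopy equivalence, which is what allows the two localized equivalences to be mixed (via Theorems \ref{homomorphismpullback} and \ref{homomorphismpullback2}) into a single integral $A_n$-equivalence $\hat F$ of $\Map(S^{r-1},\hat G;0)$ compatible with $\alpha_k$ and $\alpha_{k'}$. Without this localize-and-mix step your argument cannot be completed; with it, the construction of $\psi_u$ is unnecessary.
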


We note that when $n=1$, it is easy to see that $\ad P_N$ is fiberwise homotopy equivalent to the trivial bundle if and only if the Samelson product $\langle \epsilon ,\mathrm{id}_G\rangle \in [S^{r-1}\wedge G,G]_0$ is annihilated by $N$.
To generalize the homotopy version to the above $A_n$-version, in some sense, we need to control the $A_n$-form of the homotopy pullback.

Using the above theorem, we will show the following new result (Section \ref{section_classification}).

\begin{thm}\label{SU(2)}
Denote the principal $\SU(2)$-bundle over $S^4$ with second Chern number $k\in \mathbb{Z}$ by $P_k$.
Then, for each positive integer $n$, the gauge groups $\mathcal{G}(P_k)$ and $\mathcal{G}(P_{k'})$ are $A_n$-equivalent if $\min\{2n,v_2(k)\}=\min\{2n,v_2(k')\}$ and $\min\{[2n/(p-1)],v_p(k)\}=\min\{[2n/(p-1)],v_p(k')\}$ for any odd prime $p$, where $v_p(k)$ denotes the $p$-exponent of $k$ and $[m]$ the maximum integer not greater than $m$.
Moreover, if $v_2(k)\leq1$, the converse is also true.
\end{thm}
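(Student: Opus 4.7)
I apply Theorem~\ref{thm} with $G=\SU(2)$, $\ell=1$, $n_1=2$, $r=4$, and $\epsilon:S^3\to\SU(2)$ a generator of $\pi_3(\SU(2))\cong\mathbb{Z}$. The rationalization $\SU(2)_{(0)}\simeq S^3_{(0)}$ is a sphere and $r=2n_1$, so the hypotheses of Theorem~\ref{thm} hold, and the bundles $P_k$ appearing there agree with those of Theorem~\ref{SU(2)} (second Chern number $k$). Hence any $N$ for which $\ad P_N$ is $A_n$-trivial forces $\mathcal{G}(P_k)\simeq_{A_n}\mathcal{G}(P_{k'})$ whenever $(N,k)=(N,k')$.

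I would then pass to a prime-by-prime analysis. Using the finiteness of the set of $A_n$-types of $\{\mathcal{G}(P_k)\}_{k\in\mathbb{Z}}$ from \cite{Tsu12a} together with an arithmetic fracture argument, $A_n$-equivalence can be checked at each prime $p$ separately. So the goal becomes finding, for each $p$, the minimal exponent $a_n(p)$ for which $(\ad P_{p^{a_n(p)}})_{(p)}$ is fiberwise $A_n$-trivial; then the condition $(p^{a_n(p)},k)=(p^{a_n(p)},k')$ translates into $\min\{a_n(p),v_p(k)\}=\min\{a_n(p),v_p(k')\}$. The target values are $a_n(p)=[2n/(p-1)]$ for every $p$, matching the statement ($a_n(2)=2n$).

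The crux is the computation of $a_n(p)$. The fiberwise $A_n$-triviality of the $\SU(2)$-bundle $\ad P_k$ over $S^4$ is governed by the vanishing of a tower of iterated Samelson-type obstructions in $\pi_\ast(\SU(2))_{(p)}$ associated to the classifying map $k\epsilon$. At odd $p$, $\SU(2)_{(p)}\simeq S^3_{(p)}$ is an $A_{p-1}$-space, and the $p$-exponents of the obstructions are controlled by the $\alpha_1$-multiplication, which raises degree by $2p-2$ and contributes a single factor of $p$ per iterate; this yields $a_n(p)=[2n/(p-1)]$. At $p=2$ the $2$-primary homotopy of $S^3$ must be handled directly, building on the triviality criterion of \cite{KK10}, and gives $a_n(2)=2n$. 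Together with the local-global reduction, this establishes the sufficient direction.

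For the converse under the hypothesis $v_2(k)\leq 1$, $A_n$-equivalence of $\mathcal{G}(P_k)$ and $\mathcal{G}(P_{k'})$ implies homotopy equivalence of the $n$-th projective spaces $P_n\mathcal{G}(P_k)$, and one detects the congruences $k\equiv k'\pmod{p^{a_n(p)}}$ through cohomological or $K$-theoretic invariants of $P_n\mathcal{G}(P_k)$, using Steenrod operations together with characteristic classes of the universal adjoint bundle pulled back from $B\mathcal{G}(P_k)$. The assumption $v_2(k)\leq 1$ restricts the $2$-adic behavior so that these invariants are sharp enough to distinguish $A_n$-types. The main obstacle I foresee is the $p=2$ analysis in both directions: the $2$-primary homotopy of $S^3$ lacks the clean $\alpha_1$-periodicity available at odd primes, so establishing $a_n(2)=2n$ and producing effective $2$-local detecting invariants both require hands-on computation, which is presumably why the converse must be restricted to the case $v_2(k)\leq 1$.
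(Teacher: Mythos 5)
Your overall architecture is the paper's: apply Theorem \ref{thm} with $G=\SU(2)$, $r=4=2n_1$, $N=a_n^{\mathrm{fw}}$ the least integer with $\ad P_N$ fiberwise $A_n$-trivial, and reduce to computing $v_p(a_n^{\mathrm{fw}})$ prime by prime (which is legitimate by Proposition \ref{A_nfiberwiselocalization}, not by a separate ``fracture argument'' for $A_n$-equivalence of the gauge groups themselves --- the gluing is exactly what Theorem \ref{thm} already supplies, and positing a general prime-by-prime check of $A_n$-equivalence would itself need the whole pullback machinery you are trying to invoke). But the proposal leaves the actual content unproved at two places.

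First, the ``if'' direction needs the upper bound $v_p(a_n^{\mathrm{fw}})\leq[2n/(p-1)]$ for odd $p$, i.e.\ that $\ad P_{p^q,p}$ is $A_n$-trivial for $q=[2n/(p-1)]$. Your mechanism (``$\alpha_1$-multiplication contributes a single factor of $p$ per iterate'') is a heuristic, not an argument: the hard step is constructing the extension $S^4_{(p)}\times\mathbb{H}P^{n}_{(p)}\to\mathbb{H}P^\infty_{(p)}$ of $(p^q\epsilon,i)$ demanded by the Kishimoto--Kono criterion. The paper does this in Proposition \ref{oddtriviality} by an induction in which the attaching map of each new cell is identified, modulo $(j_r')_*\pi_*(\mathbb{H}P^\infty_{(p)})$, with a relative Whitehead product $\theta_r'=[(j_r')_*i,\bar\varphi_r']$, and the error term is killed using Selick's theorem that $p$ annihilates the odd $p$-torsion of $\pi_*(S^3)$; this requires the explicit computation of $\pi_*(X_r)$ via the homotopy fiber $R_r$. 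None of this is recoverable from the sketch. Relatedly, your claim $a_n(2)=2n$ is an overclaim --- the paper only determines $a_n^{\mathrm{fw}}$ up to a power of $2$, proving $n\leq v_2(a_n^{\mathrm{fw}})\leq 2n$ (the upper bound, which is all the ``if'' direction needs, comes from James' exponent theorem for $\pi_*(S^3)$ at $2$).

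Second, the converse. The paper's argument is concrete: at odd $p$ it combines \cite[Proposition 10.2]{Tsu12a} with the computation $v_p(\epsilon_{q(p-1)/2})=-q$ of Tsukuda's invariant from \cite{Tsu12b} (which also gives the lower bound on $v_p(a_n^{\mathrm{fw}})$); at $p=2$ it uses only that $A_n$-equivalence implies homotopy equivalence, so Kono's classification gives $(12,k)=(12,k')$, and the hypothesis $v_2(k)\leq1$ forces $v_2(k)=v_2(k')$. That is where $v_2(k)\leq 1$ enters --- not in any ``sharpness of $2$-local detecting invariants.'' Your proposed detection through Steenrod operations and $K$-theory of projective spaces is a plausible direction but is not carried out, and as stated it does not identify which invariant distinguishes the $A_n$-types or why it fails to do so $2$-locally when $v_2(k)\geq 2$.
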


In order to use various exponential laws, \textbf{we will always work in the category of compactly generated spaces and require pointed spaces have the homotopy extension property of the basepoints}.

The author would like to express his gratitude to Professors Norio Iwase, Daisuke Kishimoto and Akira Kono for giving valuable comments on this work.

\section{$A_n$-spaces and $A_n$-maps}\label{anspaces}
Let us denote the \textit{$i$-th associahedron} by $\mathcal{K}_i$ $(i=2,3,\ldots)$ and the \textit{$i$-th multiplihedron} by $\mathcal{J}_i$ $(i=1,2,\ldots)$.
There are boundary maps and degeneracy maps of them:
\begin{align*}
\partial _k(r,s)&:\mathcal{K}_r\times \mathcal{K}_s\to \mathcal{K}_{r+s-1} &&(1\leq k\leq r),\\
s_k&:\mathcal{K}_i\to \mathcal{K}_{i-1} &&(1\leq k\leq i),\\
\delta _k(r,s)&:\mathcal{J}_r\times \mathcal{K}_s\to \mathcal{J}_{r+s-1} &&(1\leq k\leq r),\\
\delta (t,r_1,\ldots ,r_t)&:\mathcal{K}_t\times \mathcal{J}_{r_1}\times \cdots \times \mathcal{J}_{r_t}\to \mathcal{J}_{r_1+\cdots +r_t},\\
d_k&:\mathcal{J}_i\to \mathcal{J}_{i-1} &&(1\leq k\leq i),
\end{align*}
which satisfy some relations.
Define the boundaries $\partial \mathcal{K}_i\subset \mathcal{K}_i$ and $\partial \mathcal{J}_i\subset \mathcal{J}_i$ as the unions of the images of boundary maps.
Then there are homeomorphisms $(\mathcal{K}_i,\partial \mathcal{K}_i)\cong (D^{i-2},S^{i-3})$ and $(\mathcal{J}_i,\partial \mathcal{J}_i)\cong (D^{i-1},S^{i-2})$, where $D^i$ is the $i$-dimensional disk and $S^{i-1}\subset D^i$ is its boundary sphere.
For details, see \cite{Sta63a} and \cite{IM89}.

Now, we recall the definitions of $A_n$-spaces and $A_n$-maps.
\begin{dfn}
Let $X$ be a pointed space.
A family of maps $\{m_i:\mathcal{K}_i\times X^i\to X\}_{i=2}^n$ is said to be an \textit{$A_n$-form} on $X$ if the following conditions are satisfied:
\begin{enumerate}
\item $m_2(x,*)=m_2(*,x)=x$
\item
$m_{r+s-1}(\partial _k(r,s)(\rho,\sigma);x_1,\ldots,x_{r+s-1})$\\
$=m_r(\rho;x_1,\ldots,x_{k-1},m_s(\sigma;x_k,\ldots ,x_{k+s-1}),x_{k+s},\ldots,x_{r+s-1})$
\item
$m_{i-1}(s_k(\rho );x_1,\ldots ,x_{i-1})=m_i(\rho ;x_1,\ldots ,x_{k-1},*,x_k,\ldots ,x_{i-1})$
\end{enumerate}
The pair $(X,\{m_i\})$ of a pointed space and its $A_n$-form is called an \textit{$A_n$-space}.
\end{dfn}
\begin{ex}
Let $G$ be a topological monoid.
Then $G$ is an $A_\infty$-space with $A_\infty$-form $\{m_i\}$ defined by
\[
m_i(\rho ; x_1,\ldots ,x_i)=x_1\cdots x_i.
\]
We call $\{m_i\}$ the \textit{canonical $A_\infty$-form} of $G$.
Unless otherwise stated, we regard a topological monoid as an $A_\infty$-space with the canonical $A_\infty$-form.
\end{ex}
\begin{dfn}
Let $(X,\{m_i\})$ and $(X',\{m_i'\})$ be $A_n$-spaces and $f:X\to X'$ a pointed map.
A family of maps $\{f_i:\mathcal{J}_i\times X^i\to X'\}_{i=1}^n$ is said to be an \textit{$A_n$-form} on $f$ if the following conditions are satisfied:
\begin{enumerate}
\item
$f_1=f$,
\item
$f_{r+s-1}(\delta _k(r,s)(\rho ,\sigma );x_1,\ldots ,x_{r+s-1})$\\
$=f_r(\rho ;x_1,\ldots,x_{k-1},m_s(\sigma;x_k,\ldots ,x_{k+s-1}),x_{k+s},\ldots,x_{r+s-1})$
\item
$f_{r_1+\cdots +r_t}(\delta (t,r_1,\ldots ,r_t)(\tau ,\rho _1,\ldots ,\rho _t);x_1,\ldots ,x_{r_1+\cdots +r_t})$\\
$=m_t'(\tau ;f_{r_1}(\rho _1;x_1,\ldots ,x_{r_1}),\ldots ,f_{r_t}(\rho _t;x_{r_1+\cdots +r_{t-1}+1},\ldots ,x_{r_1+\cdots +r_t}))$
\item
$f_{i-1}(d_k(\rho );x_1,\ldots ,x_{i-1})=f_i(\rho ;x_1,\ldots ,x_{k-1},*,x_k,\ldots ,x_{i-1})$
\end{enumerate}
The pair $(f,\{f_i\})$ of a pointed map and its $A_n$-form is called an \textit{$A_n$-map}.
For $A_n$-maps $(f,\{f_i\}),(f',\{f_i'\}):(X,\{m_i\})\to (X',\{m_i'\})$, a \textit{homotopy} between them is a continuous family of $A_n$-maps $(F:I\times X\to X',\{F_i:I\times \mathcal{J}_i\times X^i\to X'\})$ parametrized by the unit interval $I=[0,1]$ such that $F_i(0,\rho;x_1,\ldots ,x_i)=f_i(\rho ;x_1,\ldots ,x_i)$ and $F_i(1,\rho;x_1,\ldots ,x_i)=f_i'(\rho ;x_1,\ldots ,x_i)$.
\end{dfn}
\begin{rem}
In some literature, an $A_n$-map is defined as a map which admits some $A_n$-form.
But we always consider $A_n$-maps together with $A_n$-forms.
\end{rem}
\begin{rem}
In the author's previous work \cite{Tsu12a}, $A_n$-forms are not required to satisfy the condition concerning degeneracies.
So, we will pay attention to quote a result in that work if necessary.
\end{rem}
If $(f,\{f_i\}):(X,\{m_i\})\to (X',\{m_i'\})$ is an $A_n$-map and a pointed map $f':X\to X'$ is pointed homotopic to $f$, then, by homotopy extension property, $f'$ also admits an $A_n$-form $\{f_i'\}$ such that $(f',\{f_i'\})$ is homotopic to $(f,\{f_i\})$.

The definition of $A_n$-maps is complicated and it is difficult to treat in general.
So we often consider some class of $A_n$-maps defined more easily.
\begin{dfn}
Let $(X,\{m_i\})$ and $(X',\{m_i'\})$ be $A_n$-spaces.
A pointed map $f:X\to X'$ is said to be an \textit{$A_n$-homomorphism} if $f$ satisfies
\[
f(m_i(\rho ;x_1,\ldots ,x_i))=m_i'(\rho ;x_1,\ldots ,x_i).
\]
\end{dfn}
Now we see that an $A_n$-homomorphism admits an $A_n$-form.
Iwase and Mimura \cite{IM89} constructed maps $\pi_i:\mathcal{J}_i\to \mathcal{K}_i$ $(i=2,3,\ldots )$ such that
\begin{enumerate}
\item
$\pi _i\circ \delta _1(1,i)=\pi _i\circ \delta (i,1,\ldots ,1)=\mathrm{id}_{\mathcal{K}_i},$
\item
$\pi _{r+s-1}\circ \delta _k(r,s)=\partial _k(r,s)\circ (\pi _r\times\mathrm{id}_{\mathcal{K}_s}),$
\item
$\pi _{r_1+\cdots +r_t}\circ \delta (t,r_1,\ldots ,r_t)$\\
$\! =\underline{\! \partial _{r_1+\cdots +r_{t-1}+1}(r_1\! +\! \cdots \! +\! r_{t-1}\! +\! 1,r_t)\! \circ \cdots \circ \! (\partial _{r_1+1}(r_1\! +\! t\! -\! 1,r_2)\! \times \! \mathrm{id}_{\mathcal{K}_{r_3}\times \cdots \times \mathcal{K}_{r_t}})\! \circ \! (\partial _{1}(t,r_1)\! \times \! {\rm id}_{\mathcal{K}_{r_2}\times \cdots \times \mathcal{K}_{r_t}})}\! \circ \! ({\rm id}_{\mathcal{K}_t}\! \times \! \pi _{r_1}\! \times \! \cdots \! \times \! \pi _{r_t})$,
\item
$\pi _i\circ d_k=s_k\circ \pi _i$.
\end{enumerate}
Denote the underlined composite in the third equation by $D$.
Then $D$ has the property that
\[
m_{r_1+\cdots +r_t}(D(\tau ,\rho _1,\cdots ,\rho _t);x_1,\cdots ,x_{r_1+\cdots +r_t})=m_t(\tau ;m_{r_1}(\rho _1;x_1,\cdots ,x_{r_1}),\cdots ,m_{r_t}(\rho _t;\cdots ,x_{r_1+\cdots +r_t}))
\]
for an $A_n$-space $(X,\{m_i\})$.

Thus, if $f:X\to X'$ is an $A_n$-homomorphism between $A_n$-spaces $(X,\{m_i\})$ and $(X',\{m_i'\})$, then the sequence of maps $\{f_i:\mathcal{J}_i\times X^i\to X'\}_{i=1}^n$ defined by
\[
f_i(\rho ;x_1,\ldots ,x_i)=f(m_i(\pi _i(\rho );x_1,\ldots ,x_i))
\]
is an $A_n$-form of $f$.
We call $\{f_i\}$ the \textit{canonical $A_n$-form} of $f$.
Unless otherwise stated, we regard an $A_n$-homomorphism as an $A_n$-map with the canonical $A_n$-form.
\begin{ex}
A homomorphism $f:G\to G'$ between topological monoids is an $A_\infty$-homomorphism.
The canonical $A_\infty$-form $\{f_i\}$ of $f$ is given as
\[
f_i(\rho ;x_1,\ldots ,x_i)=f(x_1\cdots x_i).
\]
\end{ex}
\par
Next, we consider the composition of $A_n$-maps.
The composition of general $A_n$-maps is complicated.
But the composite of an $A_n$-map with an $A_n$-homomorphism is easily considered as follows.
\begin{dfn}
Let $(X,\{m_i\}),(X',\{m_i'\}),(X'',\{m_i''\})$ be $A_n$-spaces.
If $(f,\{f_i\}):(X,\{m_i\})\to (X',\{m_i'\})$ is an $A_n$-map and $g:X'\to X''$ is an $A_n$-homomorphism, then we call the $A_n$-map $(g\circ f,\{g\circ f_i\})$ the \textit{canonical composite} of $(f,\{f_i\})$ and $g$.
Similarly, if $f:X\to X'$ is an $A_n$-homomorphism and $(g,\{g_i\}):(X',\{m_i'\})\to (X'',\{m_i''\})$ is an $A_n$-map, then we call the $A_n$-map $(g\circ f,\{g_i\circ ({\rm id}_{\mathcal{J}_i}\times f^{\times i})\})$ the \textit{canonical composite} of $f$ and $(g,\{g_i\})$.
\end{dfn}
We also consider an equivalence relation of $A_n$-spaces.
\begin{dfn}
Let $(X,\{m_i\}),(X',\{m_i'\})$ be $A_n$-spaces.
Then an $A_n$-map $(f,\{f_i\}):(X,\{m_i\})\to (X',\{m_i'\})$ is said to be an \textit{$A_n$-equivalence} if the underlying map $f$ is a pointed homotopy equivalence.
We say $(X,\{m_i\})$ and $(X',\{m_i'\})$ are \textit{$A_n$-equivalent} if there exists an $A_n$-equivalence $(X,\{m_i\})\to (X',\{m, _i'\})$.
\end{dfn}
\begin{prp}\label{inverseofanequiv}
The relation $A_n$-equivalence is an equivalence relation between $A_n$-spaces.
\end{prp}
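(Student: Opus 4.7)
The plan is to verify reflexivity, transitivity, and symmetry separately. Reflexivity is immediate: for any $A_n$-space $(X,\{m_i\})$, the identity $\mathrm{id}_X$ is an $A_n$-homomorphism, and hence an $A_n$-map with its canonical $A_n$-form, which is trivially a pointed homotopy equivalence.

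For transitivity, given $A_n$-equivalences $(f,\{f_i\})\colon(X,\{m_i\})\to(X',\{m_i'\})$ and $(g,\{g_i\})\colon(X',\{m_i'\})\to(X'',\{m_i''\})$, the underlying map $g\circ f$ is already a pointed homotopy equivalence, so the task is to equip it with an $A_n$-form. The composition of two general $A_n$-maps is not spelled out in the excerpt, but such a composition exists in the Iwase--Mimura framework: one assembles the composed $A_n$-form by plugging $(f_{r_1},\ldots,f_{r_t})$ into $g_t$ through the operadic maps $\delta(t,r_1,\ldots,r_t)\colon\mathcal{K}_t\times\mathcal{J}_{r_1}\times\cdots\times\mathcal{J}_{r_t}\to\mathcal{J}_{r_1+\cdots+r_t}$, in a way compatible with axioms (2)--(4) in the definition of an $A_n$-form.

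The heart of the proof is symmetry. Given an $A_n$-equivalence $(f,\{f_i\})\colon(X,\{m_i\})\to(X',\{m_i'\})$, I would choose a pointed homotopy inverse $g\colon X'\to X$ together with pointed homotopies $g\circ f\simeq \mathrm{id}_X$ and $f\circ g\simeq \mathrm{id}_{X'}$, and then construct an $A_n$-form $\{g_i\}$ on $g$ by induction on $i$. If $g_1=g, g_2,\ldots,g_{i-1}$ have already been defined, conditions (2)--(4) in the definition of an $A_n$-form prescribe a map on $\partial \mathcal{J}_i\times(X')^i$. Using that $\mathcal{J}_i\cong D^{i-1}$ is a disk, that the inclusion of the basepoint into $X$ is a cofibration, and that $f$ is a homotopy equivalence, one reduces the extension problem to constructing a null-homotopy of a comparison map after pushing through $f$; this null-homotopy is then produced from the data $\{f_j\}_{j\leq i}$ together with the homotopy $f\circ g\simeq \mathrm{id}_{X'}$.

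The main obstacle is the combinatorial bookkeeping needed to verify that the boundary prescription on $\partial \mathcal{J}_i\times(X')^i$ is well defined: the multiplihedral relations among $\delta_k$, $\delta$, and $d_k$ must translate the coherence of $\{f_i\}$, $\{m_i\}$, and $\{m_i'\}$ into coherence of the candidate $\{g_i\}$ on the pairwise intersections of faces. Once this self-consistency check is made, the extension step is routine by the homotopy extension property and the contractibility of $\mathcal{J}_i$, and the resulting $(g,\{g_i\})$ provides the required $A_n$-equivalence in the opposite direction.
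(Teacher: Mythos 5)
Your reflexivity step is fine, but the symmetry argument, which is the entire content of the proposition, has a genuine gap, and the transitivity step leans on an unproved construction. For symmetry: once the boundary prescription on $\partial\mathcal{J}_i\times (X')^i\cup\mathcal{J}_i\times (X')^{[i]}$ has been checked to be consistent, the remaining extension over $\mathcal{J}_i\times (X')^i$ is \emph{not} ``routine by the homotopy extension property and the contractibility of $\mathcal{J}_i$''. Since $(\mathcal{J}_i,\partial\mathcal{J}_i)\cong(D^{i-1},S^{i-2})$ and \emph{every} face of $\partial\mathcal{J}_i$ is prescribed by conditions (2)--(4) in terms of already-known data (the faces $\delta_k(r,s)$ and $\delta(t,r_1,\ldots,r_t)$ with a nontrivial partition use lower $g_j$, while $\delta_1(1,i)$ and $\delta(i,1,\ldots,1)$ use only $m_i'$, $m_i$ and $g_1=g$), no face is left free; the deformation-retract trick used in the proof of Proposition \ref{localizationofA_n}, where the face $\delta(i,1,\ldots,1)$ is deliberately omitted because $m_i^{\mathcal{P}}$ is not yet constructed, is therefore unavailable. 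You must produce a family of null-homotopies of the boundary map, parametrized by $(X')^i$ rel the fat wedge; that is exactly the obstruction and exactly where the hypothesis that $f$ is an equivalence must enter. Your sentence about pushing through $f$ and invoking $f\circ g\simeq\mathrm{id}_{X'}$ does not close it: at stage $i$ the vanishing of the obstruction requires coherent \emph{higher} homotopies built at the earlier stages, so the induction hypothesis must be strengthened to carry the $g_j$ together with a compatible system of homotopies; moreover ``$f\circ g$ as an $A_n$-map'' is itself a composite of two general $A_n$-maps, which is precisely the construction you deferred under transitivity. On that point, the images of $\delta(t,r_1,\ldots,r_t)$ lie in $\partial\mathcal{J}_i$ and do not cover $\mathcal{J}_i$, so ``plugging the $f_{r_j}$ into $g_t$'' defines a candidate composite only on part of the boundary; making the composition of general $A_n$-maps precise requires a subdivision of the multiplihedra or the Boardman--Vogt $W$-construction, which is why the paper defines only canonical composites with $A_n$-homomorphisms.

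The paper's route (Remark \ref{inverseofanequiv_rem} via Proposition \ref{replacement}) sidesteps both problems: replace $f$ by the mapping path space $\tilde f:\tilde X\to X'$, which Proposition \ref{replacement} upgrades to an $A_n$-homomorphism, with $q:\tilde X\to X$ an $A_n$-homomorphism and homotopy equivalence. Because $\tilde f$ is simultaneously a Hurewicz fibration and a homotopy equivalence, lifting the identity of $X'$ to an $A_n$-map section $s$ with $\tilde f\circ s=\mathrm{id}_{X'}$ is an unobstructed relative lifting problem at every stage, and the desired homotopy inverse of $f$ is the canonical composite $q\circ s$ --- no composition of two general $A_n$-maps is ever needed. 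If you wish to keep your direct inductive approach, you would essentially be reproving the Boardman--Vogt homotopy invariance theorem; the fibration replacement is what makes the argument short.
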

\begin{rem}
Iwase and Mimura gave an outline of the proof in \cite{IM89}.
We will give a proof of this proposition in Remark \ref{inverseofanequiv_rem}.
\end{rem}

For a pointed space $X$, let us denote the subspace of the product $X^n$ consisting of the points $(x_1,\ldots ,x_n)\in X^n$ with $x_k=*$ for some $k$ by $X^{[n]}$ (the \textit{fat wedge}).
The localization of an $A_n$-space is also an $A_n$-space as follows.
\begin{prp}\label{localizationofA_n}
Let $\mathcal{P}$ be a set of primes and $(X,\{m_i\})$ be a path-connected $A_n$-space.
For a $\mathcal{P}$-localization map $\ell :X\to X_{\mathcal{P}}$, there exists an $A_n$-form $\{m_i^{\mathcal{P}}\}$ on $X_{\mathcal{P}}$ such that $\ell$ admits an $A_n$-form $\{\ell _i\}$ such that $(\ell,\{\ell_i\})$ is an $A_n$-map between $(X,\{m_i\})$ and $(X_{\mathcal{P}},\{m_i^{\mathcal{P}}\})$.
\end{prp}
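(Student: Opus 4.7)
The plan is to construct the $A_n$-form $\{m_i^{\mathcal{P}}\}$ on $X_{\mathcal{P}}$ together with the $A_n$-form $\{\ell_i\}$ on $\ell$ simultaneously by induction on $i$, invoking at each step the universal property of the $\mathcal{P}$-localization. As a preliminary, note that $X$, being a path-connected $A_2$-space, is in particular a path-connected $H$-space, hence nilpotent; so $X_{\mathcal{P}}$ is $\mathcal{P}$-local in the classical sense. Because $\mathcal{P}$-localization commutes with finite products of nilpotent CW complexes, each $\ell^{\times i}\colon X^i\to X_{\mathcal{P}}^i$ is a $\mathcal{P}$-localization, and, since the associahedra $\mathcal{K}_i$ and multiplihedra $\mathcal{J}_i$ are contractible, so are the induced maps $\mathrm{id}\times\ell^{\times i}\colon \mathcal{K}_i\times X^i\to \mathcal{K}_i\times X_{\mathcal{P}}^i$ and $\mathrm{id}\times\ell^{\times i}\colon \mathcal{J}_i\times X^i\to \mathcal{J}_i\times X_{\mathcal{P}}^i$, each landing in a $\mathcal{P}$-local target.

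For the base case $i\leq 2$, I set $\ell_1=\ell$ and use the universal property applied to $\ell\circ m_2\colon X\times X\to X_{\mathcal{P}}$ to obtain $m_2^{\mathcal{P}}\colon X_{\mathcal{P}}\times X_{\mathcal{P}}\to X_{\mathcal{P}}$; the attendant homotopy realizing $m_2^{\mathcal{P}}\circ (\ell\times\ell)\simeq \ell\circ m_2$ serves as $\ell_2$, after minor adjustment to enforce the unit condition. For the inductive step, assume $\{m_j^{\mathcal{P}}\}_{j\leq i}$ and $\{\ell_j\}_{j\leq i}$ have been built. Using the boundary and degeneracy relations, the inductive $A_n$-form data assembles into a pointed map $\alpha\colon \partial\mathcal{K}_{i+1}\times X_{\mathcal{P}}^{i+1}\cup \mathcal{K}_{i+1}\times X_{\mathcal{P}}^{[i+1]}\to X_{\mathcal{P}}$, while on the $X$-side $m_{i+1}\colon \mathcal{K}_{i+1}\times X^{i+1}\to X$ provides an extension over the whole of $\mathcal{K}_{i+1}\times X^{i+1}$. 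The universal property of the $\mathcal{P}$-localization $\mathcal{K}_{i+1}\times X^{i+1}\to \mathcal{K}_{i+1}\times X_{\mathcal{P}}^{i+1}$ into the $\mathcal{P}$-local target $X_{\mathcal{P}}$ then yields an extension $m_{i+1}^{\mathcal{P}}$ of $\alpha$, together with a homotopy between $\ell\circ m_{i+1}$ and $m_{i+1}^{\mathcal{P}}\circ (\mathrm{id}\times\ell^{\times(i+1)})$, which after rigidification against the inductive $\ell_j$ yields $\ell_{i+1}$.

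The main obstacle is to ensure that at each inductive stage both $m_{i+1}^{\mathcal{P}}$ and $\ell_{i+1}$ satisfy the required boundary and degeneracy identities strictly, not merely up to homotopy. This is handled by systematically applying the homotopy extension property for the cofibrations $\partial\mathcal{K}_{i+1}\times X_{\mathcal{P}}^{i+1}\cup \mathcal{K}_{i+1}\times X_{\mathcal{P}}^{[i+1]}\hookrightarrow \mathcal{K}_{i+1}\times X_{\mathcal{P}}^{i+1}$ and $\partial\mathcal{J}_{i+1}\times X^{i+1}\cup \mathcal{J}_{i+1}\times X^{[i+1]}\hookrightarrow \mathcal{J}_{i+1}\times X^{i+1}$, whose cofibrancy is guaranteed by the standing assumption that basepoints have the homotopy extension property. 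In this way, the homotopies produced at each application of the universal property (which only yields factorizations up to homotopy) are absorbed into the next-higher $A_n$-map component, converting the up-to-homotopy compatibility into strict compatibility at each inductive step.
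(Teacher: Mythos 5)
Your proposal follows essentially the same route as the paper: an induction on $i$ that uses the weak homotopy equivalence $\Map(X_{\mathcal{P}}^i,X_{\mathcal{P}})\to\Map(X^i,X_{\mathcal{P}})$ (coming from $\ell^{\times i}$ being a localization of the nilpotent space $X^i$) to produce $m_i^{\mathcal{P}}$, together with homotopy extension arguments that rigidify the resulting homotopies into the strict components $\ell_i$ over $\mathcal{J}_i$. The only difference is the order of the two sub-steps within each stage --- the paper first extends $\ell_i$ freely over $\mathcal{J}_i\times X^i$ (possible because $\partial\mathcal{J}_i$ minus the interior of the face $\delta(i,1,\ldots,1)$ is a deformation retract of $\mathcal{J}_i$) and then solves for $m_i^{\mathcal{P}}$ on that remaining face, whereas you solve for $m_i^{\mathcal{P}}$ first and then fill in $\mathcal{J}_i$ with the homotopy --- which does not change the substance of the argument.
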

\begin{proof}
We construct the $A_n$-forms $\{m_i^{\mathcal{P}}\}$ and $\{\ell _i\}$ inductively.
Put $\ell _1=\ell$.
Assume we have obtained $A_{i-1}$-forms $\{m_j^{\mathcal{P}}\}_{j=2}^{i-1}$ and $\{\ell _j\}_{j=1}^{i-1}$ as above.
We define a map $\ell _i$ on $(\partial \mathcal{J}_i-\mathrm{Int}\delta (i,1,\ldots ,1))\times X^i\cup \mathcal{J}_i\times X^{[n]}$ by
\begin{align*}
	&\ell _i(\rho ;x_1,\ldots ,x_i) \\
	=&\left\{
	\begin{array}{ll}
		\ell _s(\sigma ;x_1,\ldots ,x_{k-1},m_t(\tau ;x_k,\ldots ,x_{k+s-1}),x_{k+s},\ldots ,x_i) & (\rho =\delta _k(s,t)(\sigma ,\tau )) \\
		m_t^{\mathcal{P}}(\tau ;\ell _{s_1}(\sigma _1;x_1,\ldots ,x_{s_1}),\ldots ,\ell _{s_t}(\sigma _t;x_{s_1+\cdots +s_{i-1}+1},\ldots ,x_i)) & (\rho =\delta (t,s_1,\ldots ,s_t)(\tau ,\sigma _1,\ldots ,\sigma _t),t<i) \\
		\ell _{i-1}(d_k(\rho );x_1,\ldots ,x_{k-1},x_{k+1},\ldots ,x_i) & (x_k=*)
	\end{array}
	\right. .
\end{align*}
Since $X^{[i]}\subset X^i$ has homotopy extension property and $\partial \mathcal{J}_i-\mathrm{Int}\delta (i,1,\ldots ,1)$ is a deformation retract of $\mathcal{J}_i$, this map extends to $\ell _i:\mathcal{J}_i\times X^i\to X_{\mathcal{P}}$.
Similarly, we define $m_i^{\mathcal{P}}$ on $\partial \mathcal{K}_i\times X^i_{\mathcal{P}}\cup \mathcal{K}_i\times X^{[n]}_{\mathcal{P}}$ by
\[
	m_i^{\mathcal{P}}(\rho ;x_1,\ldots ,x_i)=\left\{
	\begin{array}{ll}
	m_s^{\mathcal{P}}(\sigma ;x_1,\ldots ,x_{k-1},m_t(\tau ;x_k,\ldots ,x_{k+s-1}),x_{k+s},\ldots ,x_i) & (\rho =\partial _k(s,t)(\sigma ,\tau )) \\
	m_{i-1}^{\mathcal{P}}(s_k(\rho );x_1,\ldots ,x_{k-1},x_{k+1},\ldots ,x_i) & (x_k=*).
	\end{array}
	\right.
\]
By the weak homotopy equivalence $(\ell ^i)^*:\Map (X^i_{\mathcal{P}},X_{\mathcal{P}})\to \Map (X^i,X_{\mathcal{P}})$, $m_i^{\mathcal{P}}$ extends to $m_i^{\mathcal{P}}:\mathcal{K}_i\times X^i_{\mathcal{P}}\to X_{\mathcal{P}}$ such that
\[
	\ell _i(\delta (i,1,\ldots ,1)(\rho ,*,\ldots ,*);x_1,\ldots ,x_i)=m_i^{\mathcal{P}}(\rho ;\ell (x_1),\ldots ,\ell (x_i)).
\]
Therefore, $(\ell,\{\ell_j\}_{j=1}^i)$ is an $A_i$-map between $(X,\{m_j\}_{j=2}^i)$ and $(X_{\mathcal{P}},\{m_j^{\mathcal{P}}\}_{j=2}^i)$.
\end{proof}
\begin{rem}
In the above proof, we only used the property of the $\mathcal{P}$-localization that $\ell:X\to X_{\mathcal{P}}$ induces the weak homotopy equivalence $\Map(X^i_{\mathcal{P}},Y)\to \Map(X^i,Y)$ for any $\mathcal{P}$-local space $Y$.
\end{rem}
\begin{rem}
Every path-connected $A_n$-space $(n\geq 2)$ is nilpotent and so it has its $\mathcal{P}$-localization in the sense of \cite{HMR75}.
\end{rem}

\section{Pullback of $A_n$-homomorphism fibrations}\label{pullback1}
From now on, we often abbreviate $A_n$-forms of $A_n$-spaces and $A_n$-maps by underlying spaces and maps, respectively.
For example, an $A_n$-space $(X,\{m_i\})$ is abbreviated by $X$, an $A_n$-map $(f,\{f_i\}):(X,\{m_i\})\to (X',\{m_i'\})$ by $f:X\to X'$, a homotopy $(F,\{F_i\})$ between $A_n$-maps $(f,\{f_i\}),(f',\{f_i'\}):(X,\{m_i\})\to (X',\{m_i'\})$ by $F:I\times X\to X'$, and so on.

As easily checked, for $A_n$-homomorphisms $f_1:X_1\to X_3$ and $f_2:X_2\to X_3$, the pullback (in the topological sense) $X$ of the diagram $X_1\xrightarrow{f_1}X_3\xleftarrow{f_2}X_2$ naturally inherits an $A_n$-form and the natural projections $p_1:X\to X_1$ and $p_2:X\to X_2$ are $A_n$-homomorphisms.
If the map $f_1:X_1\to X_3$ is a Hurewicz fibration, $X$ has the following universal property.
\begin{thm}\label{homomorphismpullback}
Let $X_1$, $X_2$ and $X_3$ be $A_n$-spaces and $f_1:X_1\to X_3$ and $f_2:X_2\to X_3$ be $A_n$-homomorphisms.
In addition, suppose $f_1$ is a Hurewicz fibration.
Denote the topological pullback of $X_1\xrightarrow{f_1}X_3\xleftarrow{f_2}X_2$ by $X$ and the projections by $p_1:X\to X_1$ and $p_2:X\to X_2$.
Then $X$ has the following homotopy lifting property:
for an $A_n$-space $Y$ and $A_n$-maps $g_1:Y\to X_1$ and $g_2:Y\to X_2$, if the canonical composites $f_1\circ g_1$ and $f_2 \circ g_2$ are homotopic as $A_n$-maps through a homotopy $G_3:I\times Y\to X_3$, then there exist an $A_n$-map $g:Y\to X$ and a homotopy $G_1:I\times Y\to X_1$ of $A_n$-maps from $g_1$ to $p_1\circ g$, where the canonical composites $p_2\circ g$ and $f_1\circ G_1$ are equal to $g_2$ and $G_3$, respectively.
Here, $g$ and $G_1$ are unique up to homotopy.
More precisely, if $g':Y\to X$ and $G_1':I\times Y\to X_1$ satisfy the same condition as $g$ and $G_1$, then there exist a homotopy $\gamma :I\times Y\to X$ of $A_n$-maps from $g$ to $g'$ and a homotopy $\varGamma _1:I\times (I\times Y)\to X_1$ of homotopies of $A_n$-maps from $G_1$ to $G_1'$ such that the canonical composites $p_2\circ \gamma$ and $f_1\circ \varGamma _1$ are the stationary homotopies of $g_2$ and $G_3$, respectively, and the canonical composite $p_1\circ \gamma$ is equal to $\varGamma |_{I\times (\{1\}\times Y)}$.
\[
\xymatrix{
Y \ar@/_/[ddr]_{g_2} \ar@/^/[drr]^{g_1}
\ar@{.>}[dr]|-{g} \\
& X \ar[d]^{p_2} \ar[r]_{p_1}
& X_1 \ar[d]_{f_1} \\
& X_2 \ar[r]^{f_2} & X_3
}
\]
\end{thm}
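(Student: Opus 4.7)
The plan is to construct the required lift $(g, G_1)$ by induction on the degree of the $A_n$-form, exploiting the universal property of the topological pullback to separate the lifting problem into a one-parameter lift through the Hurewicz fibration $f_1$ at each stage. For the underlying maps I would apply the homotopy lifting property of $f_1$ to $G_3:I\times Y\to X_3$ with initial value $g_1$ to obtain a homotopy $G_1:I\times Y\to X_1$ with $f_1\circ G_1=G_3$ and $G_1(0,-)=g_1$, and then set $g:=(G_1(1,-),g_2):Y\to X$, which lands in the pullback because $f_1G_1(1,-)=G_3(1,-)=f_2g_2$.

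Next I would build up the $A_n$-forms inductively. Suppose $A_i$-forms $\{g_j\}_{j=1}^i$ on $g$ and $\{G_{1,j}\}_{j=1}^i$ on $G_1$ have been constructed compatibly with the prescribed data. Since $X$ is the topological pullback, a map $\mathcal{J}_{i+1}\times Y^{i+1}\to X$ is precisely a pair of maps into $X_1$ and $X_2$ that agree in $X_3$; the $X_2$-component is forced to be $g_{2,i+1}$ and the $X_1$-component will be the restriction of $G_{1,i+1}$ to $\{1\}\times\mathcal{J}_{i+1}\times Y^{i+1}$. Thus the whole inductive step reduces to constructing
\[
G_{1,i+1}:I\times\mathcal{J}_{i+1}\times Y^{i+1}\to X_1
\]
as a lift of $G_{3,i+1}$ through $f_1$, subject to prescribed values on
\[
A:=\bigl(\{0\}\times\mathcal{J}_{i+1}\times Y^{i+1}\bigr)\cup\bigl(I\times\partial\mathcal{J}_{i+1}\times Y^{i+1}\bigr)\cup\bigl(I\times\mathcal{J}_{i+1}\times Y^{[i+1]}\bigr)
\]
dictated by the induction hypothesis and the defining identities involving $\delta_k$, $\delta(t,r_1,\ldots,r_t)$, and $d_k$. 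The inclusion $A\hookrightarrow I\times\mathcal{J}_{i+1}\times Y^{i+1}$ is a cofibration by the standing homotopy-extension hypotheses and a homotopy equivalence (collapse the $I$-coordinate to $0$, keeping $A$ within $A$), hence a trivial cofibration; since $f_1$ is a Hurewicz fibration, the partial lift extends compatibly with $G_{3,i+1}$, completing the step.

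The uniqueness assertion I would handle by the same scheme one parameter up: two lifts $(g,G_1)$ and $(g',G_1')$ set up a relative lifting problem on $I\times Y$ for $\gamma$ and on $I\times I\times Y$ for $\varGamma_1$, which is again solved inductively via the Hurewicz fibration $f_1$ together with the pullback property of $X$, the prescribed data now living on the appropriate trivial-cofibration subspace of $I\times I\times\mathcal{J}_{i+1}\times Y^{i+1}$. The hard part will be verifying, at each inductive step, that the prescriptions on the intersections of the several boundary and degeneracy faces of $\mathcal{J}_{i+1}$ are mutually consistent; I expect this bookkeeping to be controlled purely by the combinatorial identities for $\partial_k, s_k, \delta_k, \delta(t,\cdot,\ldots,\cdot), d_k$ already recorded in Section \ref{anspaces}, with no new geometric input needed beyond the Hurewicz fibration property of $f_1$.
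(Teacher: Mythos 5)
Your proposal is correct and follows essentially the same route as the paper: the paper likewise reduces everything to a relative lifting lemma for the Hurewicz fibration $f_1$, constructs each $(G_1)_i$ by prescribing it on $(\{0\}\times \mathcal{J}_i\cup I\times\partial\mathcal{J}_i)\times Y^i\cup I\times\mathcal{J}_i\times Y^{[i]}$ via the boundary/degeneracy identities and extending by the covering homotopy extension property, and then obtains $g$ from $(G_1|_{\{1\}\times Y},g_2)$ by the set-theoretic pullback property, with uniqueness handled one parameter up.
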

This theorem immediately follows from the property of the topological pullback and the following lemma.
\begin{lem}
Let $X_1$, $X_3$ and $Y$ be $A_n$-spaces and $f_1:X_1\to X_3$ be an $A_n$-homomorphism and a Hurewicz fibration.
If $g_1:Y\to X_1$ and $g_3:Y\to X_3$ are $A_n$-maps such that there is a homotopy $G_3:I\times Y\to X_3$ of $A_n$-maps from $f_1\circ g_1$ to $g_3$, then there exists a homotopy $G_1:I\times Y\to X_1$ of $A_n$-maps such that the canonical composite $f_1\circ G_1$ is equal to $G_3$.
\par
Moreover, if $G_1':I\times Y\to X_1$ satisfies the same condition as $G_1$, then there exists a homotopy $\varGamma _1:I\times (I\times Y)\to X$ of homotopies of $A_n$-maps from $G_1$ to $G_1'$ such that the restriction $\varGamma_1|_{I\times(\{0\}\times Y)}$ is the stationary homotopy of $g_1$ and the canonical composite $f_1\circ \varGamma _1$ is the stationary homotopy of $G_3$.
\[
	\xymatrix{
		Y \ar[r]^-{g_1} \ar[dr]^-{g_3} & X_1 \ar[d]^-{f_1} \\
		& X_3 \\
	}
\]
\end{lem}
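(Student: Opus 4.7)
The plan is to construct the components $G_{1,i} : I \times \mathcal{J}_i \times Y^i \to X_1$ of the homotopy of $A_n$-maps inductively on $i$, solving at each stage a lifting problem along the Hurewicz fibration $f_1$. For $i = 1$, we obtain $G_{1,1}$ by the covering homotopy theorem applied to $f_1$, the base homotopy $G_{3,1} = G_3$, and the initial lift $g_1$.

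For the inductive step, suppose $G_{1,j}$ has been built for all $j < i$. The axioms of a homotopy of $A_n$-maps together with the initial condition at $t=0$ force $G_{1,i}$ to take prescribed values on
$$A_i = \{0\} \times \mathcal{J}_i \times Y^i \,\cup\, I \times \bigl(\partial\mathcal{J}_i \setminus \mathrm{Int}\,\delta(i,1,\ldots,1)\bigr) \times Y^i \,\cup\, I \times \mathcal{J}_i \times Y^{[i]},$$
namely: $g_{1,i}$ on the first piece; values coming from lower $G_{1,r}$ and $G_{1,r_j}$ via relations (2)-(3) of the $A_n$-map definition on the second piece (every boundary parameter lying outside $\mathrm{Int}\,\delta(i,1,\ldots,1)$ is in the image of some $\delta_k(r,s)$ or of some $\delta(t,r_1,\ldots,r_t)$ with $t < i$); and values from $G_{1,i-1}$ via the degeneracy relation (4) on the third piece.

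As in the proof of Proposition \ref{localizationofA_n}, the three prescriptions are compatible on their mutual overlaps---this is the principal combinatorial point, handled by the coherence identities among the maps $\delta_k,\delta,d_k$ together with the inductive hypothesis that the $G_{1,j}$ for $j < i$ satisfy the full list of $A_n$-map identities. Granted this, $A_i \hookrightarrow I \times \mathcal{J}_i \times Y^i$ is a cofibration (assembled from the CW pair $(\mathcal{J}_i, \partial\mathcal{J}_i \setminus \mathrm{Int}\,\delta(i,1,\ldots,1))$ and the HEP cofibration $Y^{[i]} \hookrightarrow Y^i$) and a strong deformation retract of the ambient space, since $\partial\mathcal{J}_i \setminus \mathrm{Int}\,\delta(i,1,\ldots,1)$ is a deformation retract of $\mathcal{J}_i$. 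Pairing this with the Hurewicz fibration $f_1$ yields, by Str\o m's lifting axiom, an extension $G_{1,i} : I \times \mathcal{J}_i \times Y^i \to X_1$ covering $G_{3,i}$, which completes the induction.

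For the uniqueness assertion I would run the same induction on the square $I \times (I \times Y)$, inductively constructing $\Gamma_{1,i} : I \times I \times \mathcal{J}_i \times Y^i \to X_1$ that lifts the stationary homotopy of $G_{3,i}$ and takes prescribed values: $G_{1,i}$ and $G_{1,i}'$ on the two ends of the outer $I$-factor; the stationary homotopy of $g_{1,i}$ on $I \times \{0\} \times \mathcal{J}_i \times Y^i$; and on the boundary stratum together with the fat wedge, the values coming from the inductively built $\Gamma_{1,j}$ for $j < i$ via the structural axioms. The identical deformation-retract/Hurewicz-lift argument then produces $\Gamma_{1,i}$. The hardest part throughout is the overlap-compatibility check for the partial data defining $G_{1,i}$ (resp.\ $\Gamma_{1,i}$); once that is in hand, both existence and uniqueness are routine applications of the Hurewicz lifting property.
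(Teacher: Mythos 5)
Your overall strategy --- induct on $i$, prescribe $(G_1)_i$ on a subspace where the axioms force its values, and extend using the lifting property of the Hurewicz fibration $f_1$ against a cofibration that is a deformation retract --- is exactly the paper's. But there is one concrete error in the setup of the inductive step: you excise $\mathrm{Int}\,\delta(i,1,\ldots,1)$ from the prescribed region, and this breaks the argument. In the present lemma the target $X_1$ already carries its $A_n$-form $\{(m_1)_j\}$, so axiom (3) of the definition of an $A_n$-map forces the value of $(G_1)_i$ on the face $\delta(i,1,\ldots,1)$ as well: it must equal $(m_1)_i(\sigma;(G_1)_1(u,\tau_1;y_1),\ldots,(G_1)_1(u,\tau_i;y_i))$, which is already determined because $(G_1)_1=G_1$ is the base case of the induction. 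If you instead extend freely over $I\times\mathrm{Int}\,\delta(i,1,\ldots,1)\times Y^i$, the resulting family has no reason to satisfy this identity there, so $\{(G_1)_j\}$ need not be a homotopy of $A_n$-maps. The paper accordingly prescribes $(G_1)_i$ on all of $\bigl((\{0\}\times\mathcal{J}_i)\cup(I\times\partial\mathcal{J}_i)\bigr)\times Y^i\cup\bigl(I\times\mathcal{J}_i\times Y^{[i]}\bigr)$ and extends from there; the relevant pair is then of the form $(Z\times I,\,Z\times\{0\})$ relative to the fat-wedge cofibration, and the covering homotopy extension property of $f_1$ applies just as in your version. Excising $\mathrm{Int}\,\delta(i,1,\ldots,1)$ is the right move only when the $i$-ary multiplication of the target is being constructed simultaneously with the map (as in the proofs of Proposition \ref{localizationofA_n} and Proposition \ref{replacement}), which is not the situation here.

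The rest is fine: the overlap-compatibility of the prescriptions is the usual coherence of $\delta_k$, $\delta$ and $d_k$ combined with the inductive hypothesis, and the uniqueness half is indeed the same induction run over $I\times(I\times Y)$ --- subject to the same correction, i.e., the face $\delta(i,1,\ldots,1)$ must again be included in the prescribed data.
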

\begin{proof}
Denote the $A_n$-forms of $X_1$, $Y$, $g_1$, $G_3$ by $\{(m_1)_i\}$, $\{(m_Y)_i\}$ $\{(g_1)_i\}$ and $\{(G_3)_i\}$, respectively.
We prove this lemma by induction.
As a homotopy of pointed maps, $G_3$ lifts to $G_1$ by the covering homotopy property of $f_1$.
Assume $G_3$ lifts to a homotopy of $A_{i-1}$-maps $\{(G_1)_j\}_{j=1}^{i-1}$.
Then define a map $(G_1)_i:(((\{0\}\times \mathcal{J}_i)\cup (I\times \partial \mathcal{J}_i))\times Y^i)\cup (I\times \mathcal{J}_i\times Y^{[i]})\to X_1$ by
\begin{align*}
&(G_1)_i(u,\rho ;y_1,\ldots ,y_i) \\
=&\left\{
\begin{array}{ll}
(g_1)_i(\rho ;y_1,\ldots ,y_i) & (u=0) \\
(G_1)_s(\sigma ;y_1,\ldots ,y_{k-1},(m_Y)_t(\tau ;y_k,\ldots ,y_{k+s-1}),y_{k+s},\ldots ,y_i) & (\rho =\delta _k(s,t)(\sigma, \tau )) \\
(m_1)_s(\sigma ;(G_1)_{t_1}(\tau _1;y_1,\ldots ,y_{t_1}),\ldots ,(G_1)_{t_s}(\tau _s;y_{t_1+\cdots +t_{s-1}+1},\ldots ,y_i)) & (\rho =\delta (s,t_1,\ldots ,t_s)(\sigma ,\tau _1,\ldots ,\tau _s)) \\
(G_1)_{i-1}(u,d_k(\rho );y_1,\ldots ,y_{k-1},y_{k+1},\ldots ,y_i) & (y_k=*) \\
\end{array}
\right. .
\end{align*}
Then the composite $f_1\circ (G_1)_i$ is equal to $(G_3)_i$ at every point where $(G_1)_i$ is defined.
By the covering homotopy extension property, $(G_1)_i$ extends to $(G_1)_i:I\times \mathcal{J}_i\times Y^i\to X_1$ with $f_1\circ (G_1)_i=(G_3)_i$.
Therefore, $G_3$ lifts to a homotopy of $A_{i}$-maps $\{(G_1)_j\}_{j=1}^{i}$ satisfying the desired properties.

The latter half follows from a quite analogous argument.
\end{proof}
One can prove the universal property for homotopies as follows analogously.
For homotopies $H,H':I\times Y\to X$ of $A_n$-maps between $f$ and $f'$ and between $f'$ and $f''$, respectively, let us denote the composite of homotopies $H$ and $H'$ by $H*H'$.
More precisely, the underlying homotopy $H*H':I\times (I\times Y)\to X$ is defined as
\[
	(H*H')(v,u;y)=\left\{
	\begin{array}{ll}
		H(2v,u;y) & (0\leq v\leq 1/2) \\
		H'(2v-1,u;y) & (1/2\leq v\leq 1) \\
	\end{array}
	\right.
\]
and its $A_n$-form is similarly defined.
\begin{thm}\label{homomorphismpullback2}
Set the following diagram of pullback of $A_n$-homomorphisms as in Theorem \ref{homomorphismpullback}.
\[
	\xymatrix{
		X \ar[d]^{p_1} \ar[r]_{p_2} & X_2 \ar[d]_{f_2} \\
		X_1 \ar[r]^{f_1} & X_3 \\
	}
\]
For an $A_n$-space $Y$, $A_n$-maps $g_1,g_1':Y\to X_1$, $g_2,g_2':Y\to X_2$, and homotopies of $A_n$-maps $h_1:I\times Y\to X_1$ from $g_1$ to $g_1'$, $h_2:I\times Y\to X_2$ from $g_2$ to $g_2'$, $G_3:I\times Y\to X_3$ from $f_1\circ g_1$ to $f_2\circ g_2$ and $G_3':I\times Y\to X_3$ from $f_1\circ g_1'$ to $f_2\circ g_2'$, if the composite of homotopies $(f_1\circ h_1)*G_3'$ and $G_3*(f_2\circ h_2)$ are homotopic as homotopies of $A_n$-maps, then there exist a homotopy of $A_n$-maps $\gamma :I\times Y\to X$ and a homotopy of homotopies of $A_n$-maps $\varGamma _1:I\times (I\times Y)\to X_1$ such that the following equalities as homotopies of $A_n$-maps hold:
\begin{align*}
	\varGamma _1|_{\{ 0\} \times (I\times Y)}&=h_1, \\
	\varGamma _1|_{\{ 1\} \times (I\times Y)}&=p_1\circ \gamma , \\
	f_1\circ \varGamma _1|_{I\times (\{ 0\} \times Y)}&=G_3, \\
	f_1\circ \varGamma _1|_{I\times (\{ 1\} \times Y)}&=G_3'.
\end{align*}
\end{thm}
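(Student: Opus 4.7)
The plan is to reduce this to a two-parameter analogue of the lemma following Theorem \ref{homomorphismpullback}: I would produce $\Gamma_1$ as a lift through the Hurewicz fibration $f_1$ and then obtain $\gamma$ from the universal property of the pullback. Write $(t,s)\in I\times I$ for the two homotopy parameters, with $t$ the outer $I$ (the first factor in the domain of $\Gamma_1$) and $s$ the parameter running along $h_1,h_2,G_3,G_3'$.

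First, I would repackage the hypothesis as a filled square. By the standard correspondence between a homotopy of two concatenations and a filling of the corresponding square, carried out simultaneously at each level of the $A_n$-form data, the given homotopy from $(f_1\circ h_1)*G_3'$ to $G_3*(f_2\circ h_2)$ yields a map $\widetilde K\colon I\times I\times Y\to X_3$ endowed with an $A_n$-form on every $(t,s)$-slice and satisfying
\begin{align*}
\widetilde K|_{\{0\}\times I\times Y}&=f_1\circ h_1, & \widetilde K|_{\{1\}\times I\times Y}&=f_2\circ h_2, \\
\widetilde K|_{I\times \{0\}\times Y}&=G_3, & \widetilde K|_{I\times \{1\}\times Y}&=G_3'.
\end{align*}

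Second, the technical heart of the argument: I would lift $\widetilde K$ through $f_1$ to a map $\Gamma_1\colon I\times I\times Y\to X_1$ with $f_1\circ \Gamma_1=\widetilde K$ and $\Gamma_1|_{\{0\}\times I\times Y}=h_1$, by induction on $i$ over the multiplihedra in strict parallel with the proof of the lemma following Theorem \ref{homomorphismpullback}. At stage $i$, the $A_{i-1}$-form data already constructed, together with the prescribed values on $\{0\}\times I\times \mathcal{J}_i\times Y^i$ (from $h_1$) and on $I\times I\times \mathcal{J}_i\times Y^{[i]}$ (from degeneracies), assemble into a partial lift of $(\widetilde K)_i$, which extends over all of $I\times I\times \mathcal{J}_i\times Y^i$ by the covering homotopy extension property of $f_1$ applied in the $t$-direction.

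Finally, define $\gamma\colon I\times Y\to X$ by $\gamma(s,y):=\bigl(\Gamma_1(1,s,y),h_2(s,y)\bigr)$; this lands in the pullback since $f_1\circ\Gamma_1|_{\{1\}\times I\times Y}=\widetilde K|_{\{1\}\times I\times Y}=f_2\circ h_2$, and the $A_n$-form on $\gamma$ is inherited componentwise, the required compatibility under $f_1,f_2$ having been built into the construction of $\widetilde K$ and $\Gamma_1$. The four asserted equalities then hold by construction: the first from the chosen initial condition of the lift, the second from the definition of $\gamma$, and the last two from $f_1\circ\Gamma_1=\widetilde K$. The principal obstacle is the $A_n$-induction in the lifting step, which demands careful bookkeeping of the boundary data on $\partial\mathcal{J}_i$ and on the fat wedge $Y^{[i]}$; once organized in the manner of the preceding lemma, the remainder of the argument is formal.
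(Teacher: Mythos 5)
Your proposal is correct and follows essentially the route the paper intends: the paper only sketches this theorem as "analogous" to Theorem \ref{homomorphismpullback} and its lemma, and your argument is exactly that analogue one dimension up — repackage the hypothesis as a filled square in $X_3$, lift it through the Hurewicz fibration $f_1$ by induction over the multiplihedra using the covering homotopy extension property with initial condition $h_1$, and then assemble $\gamma$ from the strict pullback using the face $\{1\}\times I\times Y$ together with $h_2$.
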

Homotopies in this theorem are described as in the following diagram which commutes up to homotopy between homotopies of $A_n$-maps.
\begin{align*}
	\xymatrix{
		f_1\circ g_1 \ar@{=>}[r]^-{G_3} \ar@{=>}[d]_-{f_1\circ h_1} & f_2\circ g_2 \ar@{=>}[d]^-{f_2\circ h_2} \\
		f_1\circ g_1' \ar@{=>}[r]^-{G_3'} & f_2\circ g_2'
	}
\end{align*}
The arrows of the shape $\Rightarrow$ represent homotopies.

\section{Homotopy pullback of general $A_n$-maps}\label{pullback2}
In the previous section, we observed the pullback of $A_n$-homomorphisms, one of which is a Hurewicz fibration.
To generalize this construction, we show the fact that every $A_n$-map can be ``replaced'' by an $A_n$-homomorphism which is a Hurewicz fibration.
\par
Let $f:X\to X'$ be a pointed map between pointed spaces.
As is well-known, there is a Hurewicz fibration $\tilde f:\tilde X\to X'$ and a pointed homotopy equivalence $q:\tilde X\to X$ such that the following diagram commutes up to homotopy,
\[
	\xymatrix{
		\tilde X \ar[dr]^-{\tilde f} \ar[d]_-q & \\
		X \ar[r]_-{f} & X' \\
	}
\]
where
\begin{align*}
&\tilde X=\{\, (x,\ell )\in X\times X'^I\,|\, \ell (0)=f(x) \,\},\\
&q(x,\ell )=x,\hspace{1em}\tilde f(x,\ell )=\ell (1).
\end{align*}
We remark that $q$ is also a Hurewicz fibration.
\begin{prp}\label{replacement}
Let $(X,\{m_i\})$ and $(X',\{m_i'\})$ be $A_n$-spaces, $(f,\{f_i\}):(X,\{m_i\})\to (X',\{m_i'\})$ an $A_n$-map and $\tilde X$, $\tilde f$, $q$ as above.
Then $\tilde X$ admits an $A_n$-form such that $\tilde f$ and $q$ are $A_n$-homomorphisms and the canonical composite $(f\circ q,\{f_i\circ(\mathrm{id}_{\mathcal{J}_i}\times q^{\times i})\})$ is homotopic to $\tilde f$ as an $A_n$-map.
\end{prp}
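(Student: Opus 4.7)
The plan is to use the inclusion $\tilde X\subset X\times X'^I$: specifying a map into $\tilde X$ is equivalent to specifying a map into $X$ and a map into $X'^I$ satisfying $\ell(0)=f(x)$. Requiring $q$ and $\tilde f$ to be $A_n$-homomorphisms on the nose forces the form
\[
\tilde m_i(\rho;(x_1,\ell_1),\ldots,(x_i,\ell_i))=\bigl(m_i(\rho;x_1,\ldots,x_i),\,L_i(\rho;(x_1,\ell_1),\ldots,(x_i,\ell_i))\bigr),
\]
where $L_i$ is a path in $X'$ from $f(m_i(\rho;x_1,\ldots,x_i))$ to $m_i'(\rho;\ell_1(1),\ldots,\ell_i(1))$. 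Such a path is naturally supplied by the $A_n$-form of $f$: I would choose inductively in $i$ a continuous family $\alpha_i\colon\mathcal{K}_i\times[0,1/2]\to\mathcal{J}_i$ with $\alpha_i(\rho,0)=\delta_1(1,i)(*,\rho)$ and $\alpha_i(\rho,1/2)=\delta(i,1,\ldots,1)(\rho,*,\ldots,*)$, compatible with the boundary operators $\delta_k(r,s)$, $\delta(t,r_1,\ldots,r_t)$ and the degeneracies $d_k$; since $\mathcal{J}_i$ is a disk, the inductive extension of $\alpha_i$ from its forced values on $\partial\mathcal{K}_i\times[0,1/2]\cup\mathcal{K}_i\times\{0,1/2\}$ is obstruction-free. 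Then I set
\[
L_i(\rho;\tilde x)(t)=\begin{cases} f_i(\alpha_i(\rho,t);x_1,\ldots,x_i), & t\in[0,1/2], \\ m_i'(\rho;\ell_1(2t-1),\ldots,\ell_i(2t-1)), & t\in[1/2,1], \end{cases}
\]
which is well defined at $t=1/2$ by the identity $f_i(\delta(i,1,\ldots,1)(\rho,*,\ldots,*);x_1,\ldots,x_i)=m_i'(\rho;f(x_1),\ldots,f(x_i))$.

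The $A_n$-form axioms for $\{\tilde m_i\}$ are then verified coordinatewise: the $X$-component reduces to the corresponding axiom for $\{m_i\}$, while the $X'^I$-component reduces on $[0,1/2]$ to the axiom for $\{f_i\}$, combined with the compatibility of $\alpha_i$ with faces and degeneracies, and on $[1/2,1]$ to the axiom for $\{m_i'\}$. For the homotopy claim, I would take the underlying evaluation homotopy $H(u;(x,\ell))=\ell(u)$, which connects $f\circ q$ at $u=0$ to $\tilde f$ at $u=1$, and construct the higher data $H_i\colon I\times\mathcal{J}_i\times\tilde X^i\to X'$ by an analogous two-stage recipe: interpolate in $\mathcal{J}_i$ from $\rho$ to $\delta_1(1,i)(*,\pi_i(\rho))$ (again possible with the required coherence because $\mathcal{J}_i$ is contractible), feed the result into $f_i$ on the first half of the $u$-interval, and follow the relevant portion of $L_i(\pi_i(\rho);\tilde x)$ on the second half.

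The main technical obstacle is the strict pointed identity axiom $\tilde m_2(x_1,*)=\tilde m_2(*,x_1)=x_1$, since the concatenation above only yields $L_2$ equal to a reparametrization of $\ell_1$ when $\tilde x_2=*$. I would handle this exactly along the lines of the proof of Proposition \ref{localizationofA_n}: build $\tilde m_i$ first on $\partial\mathcal{K}_i\times\tilde X^i\cup\mathcal{K}_i\times\tilde X^{[i]}$, with identity and degeneracy enforced by the inductive definition, and then extend across $\mathcal{K}_i\times\tilde X^i$ using the homotopy extension property of $\tilde X^{[i]}\subset\tilde X^i$, keeping the constraints $q\circ\tilde m_i=m_i\circ(\mathrm{id}\times q^{\times i})$ and $\tilde f\circ\tilde m_i=m_i'\circ(\mathrm{id}\times\tilde f^{\times i})$ in force throughout; interior modifications of $L_i$ preserve both endpoints. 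A parallel inductive extension yields the coherent higher data $H_i$ needed for the homotopy claim.
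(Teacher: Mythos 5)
Your identification of the forced shape of $\tilde m_i$ is correct, but the explicit two-stage concatenation defining $L_i$ cannot satisfy the strict associativity axiom, and your claimed coordinatewise verification fails already at the switch-over time. Compare $\tilde m_{r+s-1}(\partial_k(r,s)(\rho,\sigma);\tilde x)$ with $\tilde m_r(\rho;\ldots,\tilde m_s(\sigma;\tilde x_k,\ldots),\ldots)$ at $t=1/2$: the first path sits at $f_{r+s-1}(\delta(r+s-1,1,\ldots,1)(\partial_k(r,s)(\rho,\sigma),*,\ldots,*);x)=m_r'(\rho;\ldots,m_s'(\sigma;f(x_k),\ldots),\ldots)$, while the second sits at $m_r'(\rho;\ldots,f(m_s(\sigma;x_k,\ldots)),\ldots)$, and $f\circ m_s\neq m_s'\circ f^{\times s}$ in general. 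Equivalently, the compatibility you need, $\alpha_{r+s-1}(\partial_k(r,s)(\rho,\sigma),t)=\delta_k(r,s)(\alpha_r(\rho,t),\sigma)$, is impossible at $t=1/2$ because $\delta(r+s-1,1,\ldots,1)\circ(\partial_k(r,s)\times\cdots)$ and $\delta_k(r,s)\circ(\delta(r,1,\ldots,1)\times\cdots)$ are \emph{different} faces of $\mathcal{J}_{r+s-1}$. The mismatch persists on $[1/2,1]$, where the outer formula evaluates the inner path $L_s$ over its entire domain, including its $f_s$-regime. The moral is that the time at which one passes from the $f_\bullet$-data to the $m_\bullet'$-data cannot be a constant $1/2$; it must vary over the cell.

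Your fallback of prescribing $\tilde m_i$ on $\partial\mathcal{K}_i\times\tilde X^i\cup\mathcal{K}_i\times\tilde X^{[i]}$ and extending is also not automatic: insisting on $q\circ\tilde m_i=m_i\circ(\mathrm{id}\times q^{\times i})$ and $\tilde f\circ\tilde m_i=m_i'\circ(\mathrm{id}\times\tilde f^{\times i})$ pins \emph{both} endpoints of the path, so the extension is a lifting problem against the fibration $(q,\tilde f):\tilde X\to X\times X'$, whose fiber is a two-endpoint path space homotopy equivalent to $\Omega X'$; since $(\mathcal{K}_i,\partial\mathcal{K}_i)\cong(D^{i-2},S^{i-3})$ and $\partial\mathcal{K}_i$ is not a deformation retract of $\mathcal{K}_i$, this is obstructed a priori, unlike the single-missing-face situation exploited in Proposition \ref{localizationofA_n}. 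The paper reverses the logic to avoid both problems: it constructs the homotopy $F_i:I\times\mathcal{J}_i\times\tilde X^i\to X'$ first, prescribing it on every face of $I\times\mathcal{J}_i$ \emph{except} $I\times\delta_1(1,i)$ (whose complement in the boundary is a deformation retract of $I\times\mathcal{J}_i$, so the extension is unobstructed), and only then reads off the path component of $\tilde m_i$ as $u\mapsto F_i(u,\delta_1(1,i)(*,\rho);\cdot)$. Associativity then follows from the multiplihedron relation $\delta_1(1,r+s-1)\circ(\mathrm{id}\times\partial_k(r,s))=\delta_k(r,s)\circ(\delta_1(1,r)\times\mathrm{id})$ rather than from any explicit concatenation; the freely extended face is precisely the coherent family of ``switch-over data'' that your constant $1/2$ cannot supply.
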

\begin{proof}
We construct maps $F_i:I\times \mathcal{J}_i\times \tilde X^i\to X'$ and $\tilde m_i:\mathcal{K}_i\times \tilde X^i\to \tilde X$ inductively as follows.
Define a map $F_1:I\times \tilde X\to X'$ by $F_1(u,(x,\ell))=\ell(u)$.
Suppose that we have defined the maps $\{F_j\}_{j=1}^{i-1}$ and $\{\tilde m_j\}_{j=2}^{i-1}$ such that
\begin{align*}
&F_j(u,\rho ;(x_1,\ell_1 ),\ldots ,(x_j,\ell _j))\\
=&\left\{
\begin{array}{ll}
f_j(\rho;x_1,\ldots,x_j) & (u=0) \\
M_j'(\rho;\ell_1(1),\ldots ,\ell_j(1)) & (u=1) \\
F_s(u,\sigma ;(x_1,\ell _1),\ldots ,\tilde m_t(\tau ;(x_k,\ell _k),\ldots ),\ldots ,(x_j,\ell _j)) & (\rho =\delta _k(s,t)(\sigma ,\tau ),s>1) \\
m_s'(\sigma ;F_{t_1}(u,\tau _1;(x_1,\ell _1),\ldots ),\ldots ,F_{t_s}(u,\tau _s;\ldots ,(x_j,\ell _j))) & (\rho =\delta (s,t_1,\ldots ,t_s)(\sigma ,\tau _1,\ldots ,\tau_s))\\
F_{j-1}(u,s_k(\rho );(x_1,\ell _1),\ldots ,(x_{k-1},\ell _{k-1}),(x_{k+1},\ell _{k+1}),\ldots ,(x_j,\ell _j)) & ((x_k,\ell _k)=*)\\
\end{array}
\right.
\end{align*}
and
\[
\tilde m_j(\rho ;(x_1,\ell _1),\ldots ,(x_j,\ell _j))=(m_j(\rho ;x_1,\ldots ,x_j),\, (u\mapsto F_j(u,\delta _1(1,j)(*,\rho );(x_1,\ell_1),\ldots ,(x_j,\ell _j)))\,),
\]
where $\{M_j'\}$ is the canonical $A_n$-form of the identity map $X'\to X'$.
Let us denote $\partial (I\times \mathcal{J}_i)=(\partial I\times \mathcal{J}_i)\cup (I\times \partial \mathcal{J}_i)$.
We also define $F_i$ on $(\partial(I\times \mathcal{J}_i)-{\rm Int}\,(I\times \delta_1(1,i)))\times \tilde X^i\cup I\times \mathcal{J}_i\times \tilde X^{[i]}$ by the above formula.
Since $\partial(I\times \mathcal{J}_i)-{\rm Int}\,(I\times \delta_1(1,i))$ is a deformation retract of $I\times \mathcal{J}_i$ and $\tilde X$ is well-pointed, $F_i$ extends over $I\times \mathcal{J}_i\times \tilde X^i$ and then $\tilde m_i$ is obtained by the above formula.
\par
Thus $\{\tilde m_i\}_{i=2}^n$ is an $A_n$-form of $\tilde X$ and $\tilde f$ is an $A_n$-homomorphism.
The family of maps $\{F_i\}$ is a homotopy from $\{f_i\circ ({\rm id}_{\mathcal{J}_i}\times q^{\times i})\}$ to $\{M_i'\circ ({\rm id}_{\mathcal{J}_i}\times \tilde f^{\times i})\}$ through $A_n$-forms, where $\{M_i'\circ ({\rm id}_{\mathcal{J}_i}\times \tilde f^{\times i})\}$ is the canonical $A_n$-form of $\tilde f$.
\end{proof}
\begin{rem}\label{replace}
	If $X$ and $X'$ are topological monoids and $f$ is a homomorphism, then $\tilde X$ is naturally a topological monoid and $\tilde f$ is homotopic to $f\circ q$ as a homomorphism.
\end{rem}

\begin{rem}\label{inverseofanequiv_rem}
In fact, using Proposition \ref{replacement}, we can show Proposition \ref{inverseofanequiv} as follows.
Let $f:X\to X'$ be an $A_n$-equivalence between $A_n$-spaces.
Take $\tilde f:\tilde X\to X'$ and $q:\tilde X\to X$ as above.
Since $\tilde f:\tilde X\to X'$ is a homotopy equivalence fibration and an $A_n$-homomorphism, we can find an $A_n$-map $s:X'\to \tilde X$ such that the canonical composite $\tilde f\circ s$ is the identity on $X'$ with the canonical $A_n$-form.
A homotopy inverse $q\circ s$ of $f$ thus admits an $A_n$-form.
\end{rem}

Now we recall homotopy pullback.
For a diagram $X_1\xrightarrow{f_1}X_3\xleftarrow{f_2}X_2$, the \textit{homotopy pullback} $X$ of this diagram is the topological pullback of the diagram $\tilde X_1\xrightarrow{\tilde f_1}X_3\xleftarrow{\tilde f_2}\tilde X_2$.
There are natural projections $q_i:X\to X_i$ $(i=1,2)$, which is defined as the composite of the canonical projections $p_i:X\to \tilde X_i$ and $\tilde q_i:\tilde X_i\to X_i$.
If there exists the following homotopy commutative diagram:
\[
	\xymatrix{
		X_1 \ar[r] \ar[d] & X_3 \ar[d] & \ar[l] \ar[d] X_2 \\
		Y_1 \ar[r] & Y_3 & Y_2 \ar[l]
	}
\]
then there exists a lift $X\to Y$.
Moreover, if all the vertical arrows are homotopy equivalence, then the lift $X\to Y$ is also a homotopy equivalence.

By Proposition \ref{replacement}, we immediately obtain the following theorem.

\begin{thm}\label{lifting1}
Let $f_1:X_1\to X_3$ and $f_2:X_2\to X_3$ be $A_n$-maps between $A_n$-spaces.
Then the homotopy pullback $X$ of $X_1\xrightarrow{f_1}X_3\xleftarrow{f_2}X_2$ admits an $A_n$-form such that the natural projections $q_1:X\to X_1$ and $q_2:X\to X_2$ are $A_n$-homomorphisms and the canonical composites $f_1\circ q_1:X\to X_3$ and $f_2\circ q_2:X\to X_3$ are homotopic as $A_n$-maps.
\end{thm}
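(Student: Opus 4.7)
The plan is to reduce the general case to Theorem \ref{homomorphismpullback} by using Proposition \ref{replacement} to replace both $A_n$-maps $f_1$ and $f_2$ by $A_n$-homomorphisms that are Hurewicz fibrations, then combine the outputs. First, I would apply Proposition \ref{replacement} to $f_i$ $(i=1,2)$ to obtain an $A_n$-space $\tilde X_i$ together with an $A_n$-homomorphism $\tilde f_i:\tilde X_i\to X_3$ which is a Hurewicz fibration, and an $A_n$-homomorphism $\tilde q_i:\tilde X_i\to X_i$ which is a pointed homotopy equivalence (and also a Hurewicz fibration, as noted before Proposition \ref{replacement}). Moreover, the canonical composite $f_i\circ \tilde q_i$ is homotopic to $\tilde f_i$ as $A_n$-maps via some homotopy $F^{(i)}:I\times \tilde X_i\to X_3$ of $A_n$-maps. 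By definition, the homotopy pullback $X$ is the topological pullback of $\tilde X_1\xrightarrow{\tilde f_1}X_3\xleftarrow{\tilde f_2}\tilde X_2$, with natural projections $p_i:X\to \tilde X_i$ and $q_i=\tilde q_i\circ p_i:X\to X_i$.

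Since $\tilde f_1$ and $\tilde f_2$ are $A_n$-homomorphisms between $A_n$-spaces, the discussion just before Theorem \ref{homomorphismpullback} endows $X$ with a canonical $A_n$-form for which the projections $p_i:X\to \tilde X_i$ are $A_n$-homomorphisms, and for which the equality
\[
\tilde f_1\circ p_1=\tilde f_2\circ p_2
\]
holds as an equality of $A_n$-homomorphisms (hence as an equality of $A_n$-maps with their canonical $A_n$-forms). Because the $A_n$-homomorphisms form a subcategory (compositions of $A_n$-homomorphisms are $A_n$-homomorphisms with the canonical $A_n$-form), each $q_i=\tilde q_i\circ p_i$ is an $A_n$-homomorphism, establishing the first part of the statement.

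For the homotopy assertion, pre-compose the homotopies $F^{(i)}$ of $A_n$-maps with the $A_n$-homomorphism $p_i$. The result $F^{(i)}\circ(\mathrm{id}_I\times p_i):I\times X\to X_3$ is again a homotopy of $A_n$-maps (in the canonical-composite sense of the definition just before Theorem \ref{homomorphismpullback2}), from the canonical composite $f_i\circ q_i=f_i\circ \tilde q_i\circ p_i$ to $\tilde f_i\circ p_i$. Concatenating these with the equality $\tilde f_1\circ p_1=\tilde f_2\circ p_2$ yields a chain
\[
f_1\circ q_1\simeq \tilde f_1\circ p_1=\tilde f_2\circ p_2\simeq f_2\circ q_2
\]
of homotopies of $A_n$-maps, whose composition (as defined before Theorem \ref{homomorphismpullback2}) provides the desired homotopy of $A_n$-maps between $f_1\circ q_1$ and $f_2\circ q_2$.

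The only real obstacle is bookkeeping: one must verify that pre-composing a homotopy of $A_n$-maps with an $A_n$-homomorphism preserves the coherence conditions of Section \ref{anspaces}, and that the equality $\tilde f_1\circ p_1=\tilde f_2\circ p_2$ truly holds at the level of $A_n$-forms rather than merely underlying maps. Both facts are immediate from the formula for the canonical composite of an $A_n$-map with an $A_n$-homomorphism and from the explicit inheritance of the $A_n$-form on a topological pullback of $A_n$-homomorphisms, so no further constructions beyond Proposition \ref{replacement} and Theorem \ref{homomorphismpullback} are required.
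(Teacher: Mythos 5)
Your proposal is correct and is essentially the paper's own argument: the paper defines the homotopy pullback precisely as the topological pullback of the fibration replacements $\tilde f_1,\tilde f_2$ from Proposition \ref{replacement} and states that Theorem \ref{lifting1} follows immediately, which is exactly the chain $f_1\circ q_1\simeq \tilde f_1\circ p_1=\tilde f_2\circ p_2\simeq f_2\circ q_2$ you spell out. The bookkeeping points you flag (associativity of canonical composites with $A_n$-homomorphisms, and the equality of $A_n$-forms on the topological pullback) are indeed the only things to check, and they hold as you say.
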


\begin{rem}
We have shown the universal property of homotopy pullback of $A_n$-homomorphisms in Theorem \ref{homomorphismpullback}.
In contrast, Theorem \ref{lifting1} does not claim any universal property of homotopy pullback of general $A_n$-maps.
To state the universal property precisely, we need the ``higher category of $A_n$-spaces and $A_n$-maps with higher homotopy''.
For example, Lurie \cite{Lur} formulated operads in a higher categorical language and showed some commutativity of general homotopy limits with forgetting operad actions.
But to apply his result to our situation, it seems to take too much effort to translate it to our language.
So we choose to work in a very naive formulation.
\end{rem}

\begin{ex}
Let $f:X\to Y$ be an $A_n$-map.
The \textit{homotopy fiber} $F$ of $f$ is defined by the following homotopy pullback square
\begin{align*}
	\xymatrix{
		F \ar[r] \ar[d] & X \ar[d]^-{f} \\
		\ast \ar[r] & Y.
	}
\end{align*}
Then, since the point $*$ and the inclusion $*\to Y$ have the unique $A_n$-forms, $F$ and the inclusion $F\to X$ are naturally an $A_n$-space and an $A_n$-map, respectively.
\end{ex}

As an application, we revisit the result of Zabrodsky \cite{Zab70}.
\begin{thm}[\cite{Zab70}]
For any prime $p$, there is a finite CW complex $X$ which admits an $A_{p-1}$-form but no $A_p$-form.
\end{thm}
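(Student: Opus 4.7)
The plan is to realize Zabrodsky's mixing construction as a homotopy pullback of $A_{p-1}$-maps via Theorem \ref{lifting1}. For $p = 2$ the statement is immediate: I take any finite CW complex which is not an $H$-space, for instance $S^2$, which vacuously admits an $A_1$-form but no $A_2$-form. Hence from now on I assume $p \geq 3$.

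The key classical input is Stasheff's theorem: the $p$-localized sphere $S^{2p-1}_{(p)}$ admits an $A_{p-1}$-form but no $A_p$-form. In parallel, I would choose a compact simply-connected Lie group $G$ whose $p$-localization splits as a space as $G_{(p)} \simeq S^{2p-1}_{(p)} \times W$, with $W$ an $A_\infty$-space inheriting its structure from $G_{(p)}$; such a $G$ exists for each odd $p$ via the classical theory of $p$-regular compact Lie groups. I then let $X_1 := S^{2p-1}_{(p)} \times W$, equipped with the product $A_{p-1}$-form assembled from Stasheff's form on $S^{2p-1}_{(p)}$ and the $A_\infty$-form on $W$, and $X_2 := G_{\mathcal{P}}$ with its canonical $A_\infty$-form, where $\mathcal{P}$ denotes the set of primes distinct from $p$.

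Next, Proposition \ref{localizationofA_n} applied to both $X_1$ and $X_2$ produces $A_{p-1}$-maps into $G_{(0)}$ via rationalization. Theorem \ref{lifting1} then equips the homotopy pullback $X$ of the cospan $X_1 \to G_{(0)} \leftarrow X_2$ with an $A_{p-1}$-form. Because $X_{(p)} \simeq X_1 \simeq G_{(p)}$ and $X_{\mathcal{P}} \simeq X_2 \simeq G_{\mathcal{P}}$, the space $X$ lies in the Mislin genus of $G$; since the Mislin genus of a simply-connected finite $H$-space consists entirely of finite $H$-spaces, $X$ is homotopy equivalent to a finite CW complex. Should $X$ admit an $A_p$-form, Proposition \ref{localizationofA_n} would yield an $A_p$-form on $X_{(p)} \simeq X_1$; composing with the product projection $X_1 \to S^{2p-1}_{(p)}$ would then produce an $A_p$-form on $S^{2p-1}_{(p)}$, contradicting Stasheff.

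The main obstacle is twofold. First, I must select $G$ so that $G_{(p)}$ genuinely splits as a space into $S^{2p-1}_{(p)} \times W$; this relies on the non-trivial classical input of $p$-regular splittings of Lie groups. Second, I must verify that the product $A_{p-1}$-form on $X_1$ is sufficiently compatible with the decomposition that the projection $X_1 \to S^{2p-1}_{(p)}$ is an $A_{p-1}$-homomorphism, so that an $A_p$-extension on $X_1$ would transport to one on the Stasheff sphere. Both points should be resolved by taking the $A_{p-1}$-structure on $X_1$ as an honest product, factor by factor, rather than a more exotic form.
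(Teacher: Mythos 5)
The first half of your argument --- $A_{p-1}$-forms on odd $p$-local spheres via the double suspension, mixing with a Lie group over the rationalization using Theorem \ref{lifting1}, and finiteness of the resulting complex via the genus --- is essentially the paper's construction (note that the paper routes both legs into $K(\mathbb{Q},3)\times\cdots\times K(\mathbb{Q},2p+1)$ precisely so that the two rationalizations land in one and the same $A_{p-1}$-space; you would need to arrange this too before Theorem \ref{lifting1} applies). The fatal problem is the non-existence half. Your final step deduces an $A_p$-form on $S^{2p-1}_{(p)}$ from one on $X_{(p)}\simeq S^{2p-1}_{(p)}\times W$ by ``composing with the product projection''. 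This implication is false, and your own choice of $G$ supplies the counterexample: if $G$ is $p$-regular with $G_{(p)}\simeq S^{2p-1}_{(p)}\times W$ (e.g.\ $G=\SU(p)$), then that product is homotopy equivalent to the localized Lie group $G_{(p)}$ and hence admits an $A_\infty$-form, even though $S^{2p-1}_{(p)}$ admits no $A_p$-form. A retract of an $A_p$-space does not inherit an $A_p$-structure for $p\geq 3$ (already homotopy associativity fails to pass to retracts), and the hypothetical $A_p$-form on $X_{(p)}$ produced by Proposition \ref{localizationofA_n} is an arbitrary one, not the product form you fixed at the $A_{p-1}$-stage, so the projection has no reason to be an $A_p$-map for it. Consequently there is no contradiction at the prime $p$ for your $X$; indeed your $X$ lies in the genus of a Lie group and may well admit an $A_p$-form.

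Two further points. First, the non-existence of an $A_p$-form on $S^{2p-1}_{(p)}$ is not Stasheff's theorem: it amounts to ruling out a space with cohomology $\mathbb{F}_p[y_{2p}]/(y^{p+1})$, i.e.\ the mod $p$ Hopf invariant one problem, which requires secondary operations because $\mathcal{P}^p$ is indecomposable in the Steenrod algebra. The paper avoids this entirely by mixing the product $S^3_{(p)}\times\cdots\times S^{2p+1}_{(p)}$ with the quasi-$p$-regular group $\SU(p+1)$ (not a $p$-regular one) and applying Lemma \ref{steenrod_on_A_p}, which extracts a contradiction from an \emph{arbitrary} $A_p$-form on the whole product via a primary-operation computation in its projective spaces: the degree-$4$ class $z_4$ satisfies $z_4^p=\mathcal{P}^2z_4$ with $\mathcal{P}^2$ decomposable in terms of $\mathcal{P}^1$, which forces $\mathcal{P}^1x_3$ to hit $x_{2p+1}$ modulo decomposables --- impossible for a product of localized spheres. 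Second, the splitting $G_{(p)}\simeq S^{2p-1}_{(p)}\times W$ is a splitting of spaces only, so $W$ does not ``inherit'' an $A_\infty$-form from $G_{(p)}$; if the splitting were multiplicative, the top sphere would itself be a loop space.
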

\begin{proof}
For the case when $p=2$, it is sufficient to take $X=S^2$.
Let $p$ be an odd prime.
Recall the fact that the double suspension map $\Sigma ^2:S^{2n-1}\to \Omega ^2S^{2n+1}$ induces the isomorphisms $\pi _i(S^{2n-1})_{(p)}\to \pi _i(\Omega ^2S^{2n+1})_{(p)}$ on $p$-localized homotopy groups for $i<2pn-3$.
This implies that the $p$-localized sphere $S^{2n-1}_{(p)}$ admits an $A_{p-1}$-form since $\Omega ^2S^{2n+1}_{(p)}$ is an $A_\infty$-space.

Next, consider the homotopy pullback $Y$ of the diagram $S^3_{(p)}\times \cdots \times S^{2p+1}_{(p)}\to K(\mathbb{Q},3)\times \cdots \times K(\mathbb{Q},2p+1)\leftarrow \SU(p+1)_{\mathcal{P}}$ of localizations of $A_{p-1}$-spaces, where $\mathcal{P}$ is the set of the primes other than $p$.
This is justified since $\SU(p+1)_{(0)}$ is $A_\infty$-equivalent to $K(\mathbb{Q},3)\times \cdots \times K(\mathbb{Q},2p+1)$ (because the classifying space $B\SU(p+1)_{(0)}$ is homotopy equivalent to $K(\mathbb{Q},4)\times \cdots \times K(\mathbb{Q},2p+2)$) and for each $i$, the rationalization $S^{2i-1}_{(p)}\to K(\mathbb{Q},2i-1)$ admits an $A_{p-1}$-form by an easy obstruction argument.
As easily seen, $Y$ is homotopy equivalent to a finite CW complex of which cells have the same dimensions as ones of $S^3\times \cdots \times S^{2p-1}$ and $\SU(p+1)$.
By Theorem \ref{lifting1}, $Y$ admits an $A_{p-1}$-form.

Consider the $p$-localization $Y_{(p)}\simeq S^3_{(p)}\times \cdots \times S^{2p+1}_{(p)}$.
Suppose $Y$ admits an $A_p$-form.
Then $Y_{(p)}$ also admits an $A_p$-form (Proposition \ref{localizationofA_n}).
This contradicts to the triviality of the action of the Steenrod operations on $H^*(Y_{(p)};\mathbb{F}_p)$ by Lemma \ref{steenrod_on_A_p} below.
Thus $Y$ admits no $A_p$-form.
\end{proof}

\begin{lem}\label{steenrod_on_A_p}
Let $X$ be an $A_p$-space with $p$ an odd prime.
If the mod $p$ cohomology of $X$ is the exterior algebra
\begin{align*}
H^*(X;\mathbb{F}_p)=\wedge_{\mathbb{F}_p}(x_3,x_5,\ldots,x_{2p+1})
\end{align*}
with $x_{2i+1}\in H^{2i+1}(X;\mathbb{F}_p)$, then there exists a unit $c\in \mathbb{F}_p-\{0\}$ such that $\mathcal{P}^1x_3-cx_{2p+1}$ is decomposable.
\end{lem}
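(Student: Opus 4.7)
My plan is to argue via the Stasheff projective $p$-space $P^p X$ associated to the $A_p$-form on $X$, whose mod $p$ cohomology in low degrees mimics that of the classifying space $B\SU(p+1)$, and then to transfer the standard calculation of $\mathcal{P}^1 c_2$ on $B\SU(p+1)$ to $X$ through this identification.

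First, I would form $P^p X$ from the $A_p$-form (as in \cite{Sta63a}, \cite{IM89}) together with its natural filtration $\Sigma X = P^1 X \subset P^2 X \subset \cdots \subset P^p X$ and the inclusion $\iota : \Sigma X \hookrightarrow P^p X$. Classes $y_{2i+2} \in H^{2i+2}(P^p X; \mathbb{F}_p)$ for $i = 1, \ldots, p$ can be chosen so that $\iota^* y_{2i+2} = \sigma^{-1}(x_{2i+1})$, where $\sigma$ is the suspension isomorphism. A spectral sequence argument (or a direct filtration analysis) then identifies $H^*(P^p X; \mathbb{F}_p)$ in degrees $\leq 2p+2$ with the corresponding truncation of the polynomial algebra $\mathbb{F}_p[y_4, y_6, \ldots, y_{2p+2}] \cong H^*(B\SU(p+1); \mathbb{F}_p)$ via $y_{2i+2} \leftrightarrow c_{i+1}$. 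The $A_p$-structure furnishes exactly the $k$-fold products ($k \leq p$) needed in this range.

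Under this ring identification, $\mathcal{P}^1 y_4$ is computed as $\mathcal{P}^1 c_2$ on $B\SU(p+1)$: expressing $c_2$ via Chern roots and applying Newton's identities together with $c_1 = 0$ gives $\mathcal{P}^1 y_4 \equiv c\, y_{2p+2}$ modulo decomposables, with $c \equiv p+1 \equiv 1 \not\equiv 0 \pmod p$. Pulling back to $\Sigma X$ along $\iota$ annihilates the decomposable part (cup products in $\Sigma X$ vanish), and then applying $\sigma$ (which commutes with $\mathcal{P}^1$) yields that $\mathcal{P}^1 x_3 - c\, x_{2p+1}$ is decomposable in $H^{2p+1}(X; \mathbb{F}_p)$, giving the lemma.

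The main obstacle is the ring identification in the second paragraph: one must verify that the $A_p$-form on $X$ produces enough multiplicative data in $P^p X$ for $H^*(P^p X; \mathbb{F}_p)$ in degrees $\leq 2p+2$ to match the truncation of the polynomial algebra $H^*(B\SU(p+1); \mathbb{F}_p)$, since we have neither a full $A_\infty$-structure nor an actual classifying space to appeal to directly.
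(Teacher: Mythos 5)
Your reduction to $B\SU(p+1)$ does not work: the sentence ``Under this ring identification, $\mathcal{P}^1 y_4$ is computed as $\mathcal{P}^1 c_2$ on $B\SU(p+1)$'' assumes exactly what the lemma is meant to establish. A ring isomorphism between $H^*(P^pX;\mathbb{F}_p)$ and a truncation of $H^*(B\SU(p+1);\mathbb{F}_p)$ in a range of degrees carries no information about the Steenrod action, since there is no map of spaces inducing it; Steenrod operations are not determined by the cup product structure. Indeed, the whole point of the application in the paper is that the mixed space $Y$ has the same mod $p$ cohomology ring as $\SU(p+1)_{(p)}$ but \emph{trivial} $\mathcal{P}^1$, and it is precisely this lemma that converts the existence of an $A_p$-form into a nontrivial $\mathcal{P}^1$. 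So the transfer step is not an ``obstacle to be checked''; it is the entire content, and it cannot be obtained from a ring identification in degrees $\leq 2p+2$.

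The paper's proof gets the Steenrod action intrinsically, and the mechanism is worth noting because it shows why you must go beyond degree $2p+2$. Writing $z_4\in H^4(XP^p;\mathbb{F}_p)$ for the transgression of $x_3$, the unstable condition gives $\mathcal{P}^2z_4=z_4^p$, and the key geometric input from the full $A_p$-structure is that $z_4^p\neq 0$ in $H^{4p}(XP^p;\mathbb{F}_p)$ (this uses the cofibration $\Sigma^{p-1}X^{\wedge p}\to XP^{p-1}\to XP^p$ and the algebra decomposition of $H^*(XP^{p-1};\mathbb{F}_p)$ from Iwase and Hemmi). The Adem relation $\mathcal{P}^1\mathcal{P}^1=2\mathcal{P}^2$ then forces $\mathcal{P}^1z_4\neq 0$, and a Cartan formula analysis of which monomials can produce $z_4^p$ under $\mathcal{P}^1$ pins down $\mathcal{P}^1z_4\equiv cz_{2p+2}$ modulo decomposables with $c\neq 0$; transgression (equivalently, your restriction to $\Sigma X$ followed by desuspension, which is fine) then yields the statement for $X$. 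If you want to salvage your write-up, replace the appeal to $B\SU(p+1)$ by this Adem-relation argument in $H^{4p}(XP^p;\mathbb{F}_p)$; the ring identification in low degrees alone cannot do the job.
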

\begin{proof}
We assume the results of \cite{Iwa84} and \cite{Hem91}.
Let us denote the $j$-th projective space of $X$ by $XP^j$.
Then we have a decomposition of an algebra
\begin{align*}
	H^*(XP^{p-1};\mathbb{F}_p)=\mathbb{F}_p[z_4,z_6,\ldots,x_{2p+2}]/(z_{2i_1+2}\cdots z_{2i_p+2}\mid 1\leq i_1\leq\cdots\leq i_p\leq p)\oplus S,
\end{align*}
where $z_{2i+2}$ is the transgression of $x_{2i+1}$ with respect to the canonical homotopy fibration $X\to \Sigma^{p-1}X^{\wedge p}\to XP^{p-1}$ and $S$ is some ideal.
Moreover, every non-trivial element in $S$ has degree $\geq 4p+1$.
Consider the canonical homotopy cofibration
\begin{align*}
	\Sigma^{p-1}X^{\wedge p}\to XP^{p-1}\to XP^p,
\end{align*}
where the left map is compatible with the projection in the homotopy fibration mentioned above.
Since the non-zero elements in $\tilde H^*(\Sigma^{p-1}X^{\wedge p};\mathbb{F}_p)$ have degree $\geq 4p-1$, the induced map $H^*(XP^p;\mathbb{F}_p)\to H^*(XP^{p-1};\mathbb{F}_p)$ is an isomorphism for $*\leq 4p-2$.
Then denote the corresponding element to $z_{2i+2}$ by the same symbol $z_{2i+2}\in H^{2i+2}(XP^p;\mathbb{F}_p)$.
The cohomology $H^{4p}(XP^p;\mathbb{F}_p)$ is spanned by the elements in $\mathbb{F}_p[z_4,z_6,\ldots,x_{2p+2}]$ of degree $4p$ and $z_4^p$ is non-trivial because $z_4^p$ is contained in the image of the connecting map $H^{4p-1}(\Sigma^{p-1}X^{\wedge p};\mathbb{F}_p)\to H^{4p}(XP^p;\mathbb{F}_p)$ and every non-trivial element in $S$ has degree $\geq 4p+1$.

Consider the element $\mathcal{P}^1z_4\in H^{2p+2}(XP^p;\mathbb{F}_p)$.
By the Adem relation, we have
\begin{align*}
	z_4^p=\mathcal{P}^2z_4=2\mathcal{P}^1\mathcal{P}^1z_4
\end{align*}
and thus $\mathcal{P}^1z_4\not=0$.
The element $\mathcal{P}^1z_4$ is written as a non-trivial linear combination of monomials in $\mathbb{F}_p[z_4,z_6,\ldots,x_{2p+2}]$ because of the degrees of non-trivial elements in $S$ and $\tilde H^*(\Sigma^{p-1}X^{\wedge p};\mathbb{F}_p)$.
Then there is a monomial $z_{2i_1+2}\cdots z_{2i_k+2}$ such that $z_4^p$ appears in the expression of $\mathcal{P}^1(z_{2i_1+2}\cdots z_{2i_k+2})$ modulo the ideal $(z_6,\ldots,z_{2p+2})$.
But by the Cartan formula, we have $k=1$ and thus obtain $i_1=2p+2$ by degree.
Therefore, the element $\mathcal{P}^1z_4-cz_{2p+2}$ is decomposable for some $c\in\mathbb{F}_p-\{0\}$ and we conclude the assertion of this proposition.
\end{proof}

\section{Framed fiberwise homotopy theory}\label{framedfiberwisehomotopytheory}
From this section to Section \ref{classificationsection}, we devote ourselves to preparing the tools to handle fiberwise $A_n$-spaces for our later applications to gauge groups.
\par
First of all, we recall the terminology of fiberwise homotopy theory \cite{CJ98}.
\begin{dfn}
Let $B$ be a space.
\begin{enumerate}
\item
	A \textit{fiberwise space} over $B$ consists of a space $E$ together with a map $\pi :E\to B$, called the \textit{projection}.
\item
	For fiberwise spaces $E\stackrel{\pi}{\to}B$ and $E'\stackrel{\pi'}{\to}B$ and a map $f:E\to E'$, $f$ is called a \textit{fiberwise map} over $B$ if $\pi '\circ f=\pi$.
\item
	For fiberwise spaces $E\stackrel{\pi}{\to}B$ and $E'\stackrel{\pi'}{\to}B$ and fiberwise maps $f_0,f_1:E\to E'$, a homotopy $f:I\times E\to E'$ between $f_0$ and $f_1$ is called a \textit{fiberwise homotopy} if $\pi'\circ f=\pi \circ p_2$, where $p_2:I\times E\to E$ is the second projection.
\item
	A \textit{fiberwise pointed space} over $B$ consists of a fiberwise space $E\stackrel{\pi}{\to}B$ together with a section $\sigma :B\to E$ of $\pi$ (i.e. $\pi \circ \sigma ={\rm id}_B$).
	Moreover, we require fiberwise pointed spaces \textit{fiberwise well-pointedness}:
	for a fiberwise map $f:E\to E'$, every fiberwise homotopy $h:I\times \sigma (B)\to E'$ with $h|_{\{0\}\times \sigma (B)}=f$ extends to a fiberwise homotopy $h':I\times E\to E'$ such that $h'|_{\{0\}\times E}=f$.
	Note that each fiber $E_b$ of a fiberwise pointed space $E$ is considered as a pointed space with basepoint $\sigma (b)$.
\item
	For fiberwise pointed spaces $B\stackrel{\sigma}{\to}E\stackrel{\pi}{\to}B$ and $B\stackrel{\sigma'}{\to}E'\stackrel{\pi'}{\to}B$ and a map $f:E\to E'$, $f$ is called a \textit{fiberwise pointed map} over $B$ if $\pi '\circ f=\pi$ and $f\circ \sigma =\sigma '$.
\item
	\textit{Fiberwise pointed homotopies} are defined to be section-preserving fiberwise homotopies in the obvious way.
\item
	For fiberwise spaces $E\stackrel{\pi}{\to}B$ and $E'\stackrel{\pi'}{\to}B$, the \textit{fiber product} $E\times_B E'$ is defined as
	\[
		E\times _BE'=\{\, (e,e')\in E\times E'\,|\, \pi (e)=\pi '(e')\,\}
	\]
	with the obvious projection $E\times _BE'\to B$.
	We denote the $n$-fold fiber product of $E$ by $E^{\times _Bn}$.
	The fiber product of fiberwise pointed spaces is defined as a fiber product of them with obvious section.
\item
	A fiberwise pointed space $B\stackrel{\sigma}{\to}E\stackrel{\pi}{\to}B$ is said to be an \textit{ex-fibration} if it has the following \textit{pointed homotopy lifting property}:
	for any base space $B'$ and fiberwise pointed space $B'\xrightarrow{\sigma '}E'\xrightarrow{\pi '}B'$ over $B'$, if $F:I\times B'\to B$ is a homotopy and a map $f_0:E'\to E$ satisfies $\pi \circ f_0=(F|_{\{0\}\times B'})\circ \pi'$ and $f_0\circ \sigma '=\sigma \circ (F|_{\{0\}\times B'})$, there is a homotopy $f:I\times E'\to E$ covering $F$ (i.e. $\pi \circ f=F\circ ({\rm id}_I\times \pi ')$) such that $f|_{\{0\}\times E'}=f_0$ and $f\circ (\mathrm{id}_I\times \sigma )=\sigma \circ F$.
\end{enumerate}
\end{dfn}
\begin{rem}
In the rest of this section, we quote May's result in \cite{May75}.
The terminology he used are different from the above.
Let $\mathcal{U}$ be the category of compactly generated spaces and $\mathcal{T}$ be the category of well-pointed compactly generated spaces.
We list corresponding his terminology here:
\begin{enumerate}
\item
	$\mathcal{U}$-space,
\item
	$\mathcal{U}$-map,
\item
	$\mathcal{U}$-homotopy,
\item
	$\mathcal{T}$-space,
\item
	$\mathcal{T}$-map,
\item
	$\mathcal{T}$-homotopy,
\item
	(he did not used fiber product in \cite{May75}),
\item
	$\mathcal{T}$-fibration.
\end{enumerate}
We remark that May's $\mathcal{U}$-fibration means just Hurewicz fibration.
\end{rem}
Let us introduce framed fibrations.
Since the unframed theory has already been established, we concentrate on the case such that the base space is path-connected.
\begin{dfn}
Fix a space $X$, a Hurewicz fibration $E\to B$ over a path-connected pointed space $B$ is said to be \textit{$X$-framed} if a homotopy equivalence, which we call \textit{framing}, from $X$ to the fiber $E_*$ over the basepoint is given.
For $X$-framed Hurewicz fibrations $E$ and $E'$ over $B$, a fiberwise map $f:E\to E'$ is said to be \textit{$X$-framed} if the following diagram commutes up to homotopy and such homotopy $h$ is given:
\[
	\xymatrix{
		 & X \ar[dl] \ar[dr] & \\
		E_* \ar[rr]^f & & E'_* \\
	}
\]
where $X\to E_*$ and $X\to E'_*$ are the framings.
Two $X$-framed fiberwise spaces are said to be \textit{$X$-framed equivalent} if there exists an $X$-framed fiberwise map between them. 

Similarly, an ex-fibration $E\to B$ over a path-connected pointed space $B$ is said to be \textit{$X$-framed} if a pointed homotopy equivalence from $X$ to the fiber $E_*$ over the basepoint is given.
The maps and equivalences between them are also similarly defined.
\end{dfn}
\begin{rem}
By Dold's Theorem \cite[Theorem 2.6]{May75}, $X$-framed equivalence is an equivalence relation between $X$-framed Hurewicz or ex-fibrations.
\end{rem}
Let $B$ be a path-connected pointed space.
For a space $X$ with the homotopy type of a CW complex, define a set $\mathcal{E}X(B)_0$ as the set of $X$-framed equivalence classes of $X$-framed Hurewicz fibrations over $B$.
Similarly, for a pointed space $X$ pointed homotopy equivalent to a CW complex, define a set $\mathcal{E}_0X(B)_0$ as the set of $X$-framed equivalence classes of $X$-framed ex-fibrations over $B$.
Let $\varphi :B\to B'$ be a pointed map.
Then the pullback of fibrations by $\varphi$ induces maps $\mathcal{E}X(B')_0\to \mathcal{E}X(B)_0$ and $\mathcal{E}_0X(B')_0\to \mathcal{E}_0X(B)_0$.
This map is determined by a pointed homotopy class of $\varphi$.
This follows from the next lemma, which is proved similarly to \cite[Lemma 2.4]{May75}.
\begin{lem}\label{lem_relativelifting}
Let $E$ and $E'$ be Hurewicz fibrations over $B$ and $B'$, and $A\subset B$ be a closed subset with homotopy extension property.
Then, for a homotopy $\bar h:I\times B\to B'$ and a map $h:(I\times E_A)\cup(\{0\}\times E)\to E'$ covering $\bar h|_{(I\times A)\cup(\{0\}\times B)}$, there exists an extension $h':I\times E\to E'$ of $h$ covering $\bar h$.

For ex-fibrations $E$ and $E'$, the similar assertion holds.
\end{lem}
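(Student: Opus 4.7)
The plan is to follow the classical argument for relative covering homotopy extension along the lines of \cite[Lemma 2.4]{May75}.

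Since $A\hookrightarrow B$ has the homotopy extension property and $\pi:E\to B$ is a Hurewicz fibration, a standard argument shows that $E_A=\pi^{-1}(A)\hookrightarrow E$ is also a closed cofibration. Consequently, $Z:=(\{0\}\times E)\cup(I\times E_A)$ is a strong deformation retract of $I\times E$: one obtains a retraction $r:I\times E\to Z$ together with a homotopy $D:I\times I\times E\to I\times E$ from the identity to $\iota\circ r$ (where $\iota:Z\hookrightarrow I\times E$) that fixes $Z$ throughout.

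First I would set $k:=h\circ r:I\times E\to E'$. This extends $h$ but covers the map $\bar h\circ(\mathrm{id}_I\times\pi)\circ r$ rather than $\bar h\circ(\mathrm{id}_I\times\pi)$. The two base maps are connected by the homotopy
\begin{align*}
H:I\times I\times E\to B',\qquad H(t,s,e)=\bar h\bigl((\mathrm{id}_I\times\pi)(D(1-t,s,e))\bigr),
\end{align*}
which is stationary on $Z$ and runs from $\pi'\circ k$ at $t=0$ to $\bar h\circ(\mathrm{id}_I\times\pi)$ at $t=1$. Applying the covering homotopy extension property of $\pi':E'\to B'$, with initial lift $k$ and base homotopy $H$, yields a covering homotopy $\tilde H:I\times I\times E\to E'$, and the desired extension is $h':=\tilde H(1,\cdot)$.

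The key technical point, which is the main obstacle, is to arrange that $\tilde H$ be stationary on $Z$ throughout; this is precisely what guarantees $h'|_Z=h$. This is the content one inherits from \cite[Lemma 2.4]{May75}: a Hurewicz fibration admits the right lifting property against the acyclic cofibration $Z\hookrightarrow I\times E$, so a lift of $H$ extending the stationary lift on $Z$ exists. For the ex-fibration case the same argument applies, provided the retraction $r$ and the covering homotopy $\tilde H$ are chosen section-preserving; this is possible because the image of the section lies inside $Z$ and all the constructions above admit section-preserving variants in the standard way (cf. the pointed covering homotopy property built into the definition of ex-fibration).
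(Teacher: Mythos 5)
Your argument for the Hurewicz fibration case is correct and is exactly the argument the paper intends: the paper itself gives no details, deferring to May's Lemma 2.4, and the Str\o m-type reasoning you use (that $E_A\hookrightarrow E$ is a closed cofibration because $A\hookrightarrow B$ is one and $\pi$ is a fibration, hence $Z=(\{0\}\times E)\cup(I\times E_A)$ is an acyclic closed cofibration in $I\times E$, against which the Hurewicz fibration $\pi'$ has the right lifting property) is the standard proof. In fact, once you have observed that $Z\hookrightarrow I\times E$ is an acyclic cofibration, the lift $h'$ exists in one step by solving the lifting problem for the square with top map $h$ and bottom map $\bar h\circ(\mathrm{id}_I\times\pi)$; the intermediate constructions $k$, $H$, $\tilde H$ are a correct but redundant unpacking of this.

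There is, however, a genuine error in your treatment of the ex-fibration case: the image of the section does \emph{not} lie inside $Z$. One has $\{0\}\times\sigma(B)\subset Z$ and $I\times\sigma(A)\subset Z$, but $I\times\sigma(B)\not\subset(\{0\}\times E)\cup(I\times E_A)$ unless $A=B$, so nothing forces your $h\circ r$ or the final lift to satisfy $h'(t,\sigma(b))=\sigma'(\bar h(t,b))$ for $t>0$ and $b\notin A$. The repair is to enlarge the subspace: extend $h$ over $I\times\sigma(B)$ by the forced formula $h(t,\sigma(b)):=\sigma'(\bar h(t,b))$, check this agrees with the given $h$ on the overlap $(\{0\}\times\sigma(B))\cup(I\times\sigma(A))$, and then run your argument with $Z$ replaced by $Z'=Z\cup(I\times\sigma(B))$. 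The fiberwise well-pointedness required of fiberwise pointed spaces in Section \ref{framedfiberwisehomotopytheory} guarantees that $\sigma(B)\hookrightarrow E$ is a fiberwise cofibration, so $Z'\hookrightarrow I\times E$ is still an acyclic closed cofibration, and the final lift is produced by the pointed homotopy lifting property in the definition of an ex-fibration (May's $\mathcal{T}$-fibration CHEP) rather than the unpointed one.
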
 
The following classification theorem is also shown similarly to the unframed versions \cite[Theorem 9.2]{May75}.
\begin{thm}[The classification theorem for framed fibrations]
The following statements hold.
\begin{enumerate}
\item
Let $X$ be a space with the homotopy type of a CW complex.
Then there exists an $X$-framed Hurewicz fibration $EX$ over the classifying space $BFX$ of the topological monoid $FX$ of self homotopy equivalences on $X$ such that the map $\Phi :[B,BFX]_0\to \mathcal{E}X(B)_0$ given by the pullback $[\varphi]\mapsto [\varphi ^*EX]$ is an isomorphism for any pointed CW complex $B$.
\item
Let $X$ be a pointed space pointed homotopy equivalent to a CW complex.
Then there exists an $X$-framed ex-fibration $EX$ over the classifying space $BHX$ of the topological monoid $HX$ of pointed self homotopy equivalences on $X$ such that the map $\Phi :[B,BHX]_0\to \mathcal{E}_0X(B)_0$ given by the pullback $[\varphi]\mapsto [\varphi ^*EX]$ is an isomorphism for any pointed CW complex $B$.
\end{enumerate}
\end{thm}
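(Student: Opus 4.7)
The plan is to mimic May's proof of the classification theorem for unframed Hurewicz and ex-fibrations \cite[Theorem 9.2]{May75}, carefully tracking the framing data throughout. I will treat the Hurewicz case (1); the ex-fibration case (2) is entirely analogous with $FX$ replaced by $HX$ and with $X$ viewed as a pointed space so that the associated bundle acquires a section.

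First, I would construct the universal $X$-framed Hurewicz fibration. The monoid $FX$ acts on $X$ by evaluation, and one forms $EX := EFX \times_{FX} X$ over $BFX$, where $EFX \to BFX$ is the universal principal $FX$-quasifibration used to build $BFX$. The fiber over the basepoint $* \in BFX$ is canonically identified with $X$ via the identity element of $FX$; I take this identity map $X \to EX_*$ as the tautological framing, making $EX$ into a canonical $X$-framed fibration over $BFX$.

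Second, I would verify that $\Phi$ is well-defined. For a pointed map $\varphi: B \to BFX$, the pullback $\varphi^*EX$ inherits a framing from the identification $\varphi(*) = *$ together with the tautological framing of $EX$. If $\varphi_0, \varphi_1: B \to BFX$ are pointed homotopic via $\bar h: I \times B \to BFX$, I apply Lemma \ref{lem_relativelifting} with $A = \{*\}$ to lift $\bar h$ to a fiberwise map $\varphi_0^*EX \to \varphi_1^*EX$ over $B$ that restricts on the basepoint fiber to the identity of $X$. By Dold's theorem this is a fiberwise homotopy equivalence, and it is $X$-framed by construction.

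Third, for surjectivity, given an $X$-framed Hurewicz fibration $E \to B$ with framing $\theta: X \to E_*$, May's unframed classification yields a (possibly unpointed) map $\varphi: B \to BFX$ together with an unframed fiberwise homotopy equivalence $f: E \to \varphi^*EX$. The restriction of $f$ to the basepoint fiber, composed with $\theta$, gives a self-equivalence of $X$, that is, an element of $FX$. Since $FX$ is path-connected to the identity in the pointed sense via the fibration $FX \to EFX \to BFX$, I use this path together with Lemma \ref{lem_relativelifting} to modify $\varphi$ by a free homotopy to a pointed map $\varphi'$ so that the new equivalence $E \to \varphi'^*EX$ is $X$-framed. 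For injectivity, suppose $\varphi_0^*EX$ and $\varphi_1^*EX$ are $X$-framed equivalent. The underlying fiberwise equivalence gives, by the unframed version, a free homotopy between $\varphi_0$ and $\varphi_1$; the framing compatibility at the basepoint provides a path in $BFX$ at the basepoint that, combined with Lemma \ref{lem_relativelifting} on a neighborhood of the basepoint of $I \times B$, deforms the free homotopy into a pointed one.

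The main technical obstacle is Step~3: one must ensure that the free classifying map delivered by the unframed theorem can be rigidified to a pointed map in a manner compatible with the prescribed framing $\theta$. This requires translating the ambiguity in the framing, which lives in $\pi_0 FX$, into a homotopy of $\varphi$ coming from the loops in $BFX$ at the basepoint, and then propagating this pointed correction to the total space via the relative lifting lemma. Once this rigidification step is carried out, the remainder of the argument is a routine adaptation of the unframed proof.
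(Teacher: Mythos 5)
Your argument is essentially correct, but it takes a genuinely different route from the paper. The paper constructs an explicit inverse $\Psi:\mathcal{E}X(B)_0\to[B,BFX]_0$ via the two-sided bar construction, pointing $B(PE,FX,*)$ by the framing $X\to E_*$, and the entire technical content is the well-definedness of $\Psi$ on framed equivalence classes: since a framed map preserves framings only up to a \emph{chosen} homotopy $h$, the induced map $B(Pf,\mathrm{id}_{FX},*)$ need not be pointed, and the paper repairs this with a whisker construction $B(PE,FX,*)\vee I$ carrying $h$ on the interval; after that it invokes May's argument verbatim. You instead prove bijectivity of $\Phi$ directly by reducing to the unframed theorem and then rigidifying the free classifying map, using Lemma \ref{lem_relativelifting} together with the identification $\pi_1(BFX)\cong\pi_0(FX)$ to absorb the framing ambiguity into a loop at the basepoint. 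This buys a more elementary-looking reduction at the price of having to verify, in the injectivity step, that the trace loop of the free homotopy produced by the unframed theorem maps under $\partial:\pi_1(BFX)\to\pi_0(FX)$ precisely to the class of $f_*$, so that the framing homotopy kills it; you correctly flag this holonomy bookkeeping as the main obstacle. One phrasing in your Step 3 is wrong as stated: $FX$ is in general \emph{not} path-connected to the identity; what you actually use is the contractibility of $EFX$, which makes $\partial:\pi_1(BFX)\to\pi_0(FX)$ a bijection so that every class in $\pi_0(FX)$ is realized by a loop in $BFX$ (and only the trivial class by the constant loop). Your closing paragraph shows you intend exactly this, so the slip is cosmetic rather than a gap.
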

\begin{proof}
(1)
Define a map $\Psi:\mathcal{E}X(B)_0\to [B,BFX]_0$ by the following procedure.
For a $X$-framed Hurewicz fibration $E\to B$, we have the associated principal fibration
\[
	PE=\{ \, u:X\to E \, |\, u \mathrm{\, is \, a \, homotopy \, equivalence\, between \,}X \mathrm{\, and \, some \, fiber} \, \}\longrightarrow B
\]
with basepoint given by the framing $X\to E$ and the following diagram of principal quasi-fibrations.
\[
	\xymatrix{
		PE \ar[d] & B(PE,FX,FX) \ar[l]_-{\simeq} \ar[d] \ar[r] & B(*,FX,FX)=EFX \ar[d] \\
		B \ar@/_2pc/[rr]^-{\psi} & B(PE,FX,*) \ar[l]_-{\simeq} \ar[r] & B(*,FX,*)=BFX
	}
\]
The horizontal arrows are natural maps and the map $\psi$ is the composite of the homotopy inverse of $B(PE,FX,*)\to B$ and the map $B(PE,FX,*)\to BFX$.
Define $\Psi (E)=[\psi ]$.
Since our framed maps preserve framing only up to homotopy, the well-definedness of $\Psi$ is not so trivial as in May's original proof.
More precisely, for an $X$-framed fibrewise map $(f,h):E\to E'$, the induced map $B(Pf,{\rm id}_{FX},*):B(PE,FX,*)\to B(PE',FX,*)$ is not necessarily a pointed map.
In order to verify $\Psi(E)=\Psi(E')$, it is sufficient to consider the following commutative diagram:
\begin{align*}
	\xymatrix{
		 & B & \\
		B(PE,FX,*) \ar[ur]^-{\simeq} \ar[dr] & B(PE,FX,*)\vee I \ar[u]^-{\simeq} \ar[l]_-{\simeq} \ar[r]^-{f\vee h} \ar[d] & B(PE',FX,*) \ar[ul]_-{\simeq} \ar[dl] \\
		 & B(*FX,*), &
	}
\end{align*}
where $B(PE,FX,*)\vee I$ is considered as a pointed space with base point $1\in I$ and $f\vee h$ is the map defined by $B(Pf,{\rm id}_{FX},*)$ on $B(PE,FX,*)$ and $h:I\to PE'$ on $I$ with $h(0)=(f_*\circ(\mathrm{the\, framing\, of\,}E))\in PE'$ and $h(1)=(\mathrm{the\, framing\, of\,}E')\in PE'$.
Once the well-definedness is established, the theorem follows from the completely same argument as in May's proof.

The assertion (2) also follows from the analogous argument.
\end{proof}
\begin{rem}
In the corollaries of \cite[Theorem 9.2]{May75}, the fibers are assumed to be compact.
But this assumption is not needed as remarked in the addenda of \cite{May75} and \cite[Lemma 1.1]{May80}.
\end{rem}
\begin{rem}
Similarly to \cite[Theorem 9.2]{May75}, one can also show the ``framed fiber bundle'' version.
\end{rem}

\section{Quasi-category of fiberwise $A_n$-spaces}\label{qcatAn}
In this section, we recall the definition of fiberwise $A_n$-spaces and introduce the framed version.
\begin{dfn}
Let $E$ be an ex-fibration over $B$.
A family of fiberwise maps $\{m_i:\mathcal{K}_i\times E^{\times _Bi}\to E\}_{i=2}^n$ over $B$ is said to be a \textit{fiberwise $A_n$-form} on $E$ if it restricts to an $A_n$-form $\{m_i:\mathcal{K}_i\times (E_b)^i\to E_b\}_{i=2}^n$ on each fiber $E_b$.
An ex-fibration equipped with a fiberwise $A_n$-form on it is called a \textit{fiberwise $A_n$-space}.
Fiberwise $A_n$-maps, their fiberwise homotopies, fiberwise $A_n$-homomorphisms and canonical composition are defined similarly.
\end{dfn}
We note that every $A_n$-space can be considered as a fiberwise $A_n$-space over a point.

Boardman and Vogt \cite{BV73} constructed the quasi-categories (the term used in \cite{Joy02}), which they called restricted Kan complexes, consisting of their ``$W\mathfrak{B}$-spaces''.
\begin{dfn}
Let $\mathcal{R}=\{\mathcal{R}_i\}_{i=0}^\infty$ be a simplicial class.
Then $\mathcal{R}$ is said to be a \textit{quasi-category} if the following \textit{restricted Kan condition} holds for $i\geq 2$:
for any $i-1$-simplices $x_0,\cdots ,x_{j-1},x_{j+1},\cdots ,x_i\in \mathcal{R}_{i-1}$, $0<j<i$, such that $d^{\ell-1}x_{k}=d^{k}x_{\ell}$ for $0\leq k<\ell \leq i$ and $k,\ell \not=j$, there exists an $i$-simplex $x\in \mathcal{R}_i$ such that $d^kx=x_k$ for $k\not=j$.
\par
For a quasi-category $\mathcal{R}$, the \textit{homotopy category} $\mathrm{ho}\mathcal{R}$ of $\mathcal{R}$ is the ordinary category whose objects are the elements in $\mathcal{R}_0$ and the morphisms between $a,b\in \mathcal{R}_0$ are the simplicial homotopy classes of edges $f\in \mathcal{R}_1$ with $d^1f=a$ and $d^0f=b$, where the composite of $[f]:a\to b$ and $[g]:b\to c$ is defined to be such a map $[h]:a\to c$ as there exists a 2-simplex $\sigma \in \mathcal{R}_2$ with $d^2\sigma =f$, $d^0\sigma =g$ and $d^1\sigma =h$.
\begin{align*}
	\xymatrix{
		& b \ar[dr]^-{g} & \\
		a \ar[ur]^-{f} \ar[rr]^-{h} & & c
	}
\end{align*}
\end{dfn}
\begin{rem}
The identity on $a$ in $\mathrm{ho}\mathcal{R}$ is represented by $s^0a\in \mathcal{R}_1$.
\end{rem}
Now we recall the terminology and results of \cite{BV73} arranged for fiberwise $A_n$-spaces as explained in \cite[Section 3 and 4]{Tsu12a}.
But now we need the pointed version.
\begin{enumerate}
\item
Fiberwise $A_n$-spaces and fiberwise $Q^nh\mathfrak{A}$-maps.

Let $\mathfrak{A}(n,1)$ be a point for $n\geq 0$ and then $\mathfrak{A}$ has the unique \textit{based monochrome PRO} \cite[Section V.2]{BV73} structure.
Applying the \textit{$W''$-construction} \cite[Section V.3]{BV73} to $\mathfrak{A}$, we obtain the associahedron $(W''\mathfrak{A})(n,1)\cong \mathcal{K}_n$ for each $n\geq 2$ and $W''\mathfrak{A}(0,1)$ and $W''\mathfrak{A}(1,1)$ are one point spaces, where the boundary map $\partial _k(r,s):\mathcal{K}_r\times \mathcal{K}_s\to \mathcal{K}_{r+s-1}$ and the degeneracy map $s_k:\mathcal{K}_i\to \mathcal{K}_{i-1}$ on associahedra correspond to the composite $s$-ary tree on the $k$-th twig and the composite the stump on the $k$-th twig in $W''\mathfrak{A}$, respectively.
We denote the based PRO-subcategory of $W''\mathfrak{A}$ generated by $(W''\mathfrak{A})(i,1)$ for $i\leq n$ by $Q^nW''\mathfrak{A}$.
Then, fiberwise $Q^nW''\mathfrak{A}$-spaces, defined similarly to \cite[Definition 5.1]{BV73}, and our fiberwise $A_n$-spaces coincide.
The linear category $\mathfrak{L}_n=\{0\to 1\to \cdots \to n\}$ is regarded as a PRO colored by the set $\{0,1,\cdots ,n\}$.
Consider the Boardman-Vogt tensor product \cite[Section II.3]{BV73} $\mathfrak{A}\otimes \mathfrak{L}_n$ of $\mathfrak{A}$ and $\mathfrak{L}_n$, which is a based PRO colored by $\{0,1,\cdots ,n\}$.
Similarly to $Q^nW''\mathfrak{A}$, let $Q^nHW''(\mathfrak{A}\otimes \mathfrak{L}_m)$ be the homogeneous PRO-subcategory of $HW''(\mathfrak{A}\otimes \mathfrak{L}_m)$ generated by $HW''(\mathfrak{A}\otimes \mathfrak{L}_m)(i^r,k)$ ($i^r:\{1,\ldots,r\}\to\{0,1,\ldots,m\}$ denotes the constant function with value $i$) with $r\leq n$.
We call a fiberwise $Q^nHW''(\mathfrak{A}\otimes \mathfrak{L}_1)$-space a \textit{fiberwise $Q^nh\mathfrak{A}$-map}.
Further, if its underlying map is a fiberwise pointed homotopy equivalence, it is said to be a \textit{fiberwise $Q^nh\mathfrak{A}$-equivalence}.
Similarly, we use the terms \textit{$Q^nh\mathfrak{A}$-map} and \textit{$Q^nh\mathfrak{A}$-equivalence} between $A_n$-spaces.
\item
Correspondence of fiberwise $A_n$-maps and fiberwise $Q^nh\mathfrak{A}$-maps.

In a similar manner to \cite[Definition 4.1]{BV73}, one can define the based PRO subcategory $LW''(\mathfrak{A}\otimes \mathfrak{L}_1)\subset HW''(\mathfrak{A}\otimes \mathfrak{L}_1)$ consisting of level-trees.
The PRO subcategory $Q^nLW''(\mathfrak{A}\otimes \mathfrak{L}_1)\subset LW''(\mathfrak{A}\otimes \mathfrak{L}_1)$ generated by $LW''(\mathfrak{A}\otimes \mathfrak{L}_1)(i^r,k)$ with $r\leq n$ is a deformation retract of $Q^nHW''(\mathfrak{A}\otimes \mathfrak{L}_1)$ as a based PRO subcategory by a similar argument to the proof of \cite[Proposition 4.6]{BV73}.
Then there is a natural homeomorphism $LW''(\mathfrak{A}\otimes \mathfrak{L}_1)(0^r,1)\cong \mathcal{J}_r$ for each $r$ which has the compatibility about boundary and degeneracy maps similar to the case of associahedra explained above.
Thus fibrewise $Q^nLW''(\mathfrak{A}\otimes \mathfrak{L}_1)$-spaces just correspond to fiberwise $A_n$-maps.
This observation implies that fiberwise $A_n$-maps and fiberwise $Q^nh\mathfrak{A}$-maps correspond one-to-one up to fiberwise homotopy preserving the multiplicative structures.
\item
Quasi-category of fiberwise $A_n$-spaces.

Quite analogously to \cite[Theorem 5.23]{BV73}, the sequence $\mathcal{R}^n(B)=\{\mathcal{R}_i^n(B)\}_{i=0}^\infty$ of classes such that $\mathcal{R}_i^n(B)$ consists of all fiberwise $Q^nW''(\mathfrak{A}\otimes \mathfrak{L}_i)$-spaces over the base $B$ is a quasi-category.
The boundary map $d^k:\mathcal{R}^n_i(B)\to \mathcal{R}^n_{i-1}(B)$ is induced from the $k$-th injection $\mathfrak{L}_{i-1}\to \mathfrak{L}_i$ and the degeneracy map $s^k:\mathcal{R}^n_i(B)\to \mathcal{R}^n_{i+1}(B)$ is induced from the $k$-th surjection $\mathfrak{L}_{i+1}\to \mathfrak{L}_i$.
Every map $B'\to B$ induces a simplicial map $\mathcal{R}^n(B)\to \mathcal{R}^n(B')$ defined by pullback.
\item
Composition of fiberwise $Q^nh\mathfrak{A}$-maps.

The color-forgetting map $\mathcal{J}_n\cong LW''(\mathfrak{A}\otimes \mathfrak{L}_1)(0^n,1)\hookrightarrow HW''(\mathfrak{A}\otimes \mathfrak{L}_1)(0^n,1)\to W''\mathfrak{A}(n,1)\cong \mathcal{K}_n$ induced from the map $\mathfrak{L}_1\to \mathfrak{L}_0$ satisfies the same condition for $\pi _n$ in Section \ref{anspaces}.
For a simplex $\sigma \in \mathcal{R}^n_i(B)$ and a fiberwise $A_n$-homomorphism $g:E_0'\to E_0=\sigma|_{Q^nHW''(\mathfrak{A}\otimes\{0\})}$, define the \textit{canonical composite} $\sigma \circ g\in \mathcal{R}^n_i(B)$ by
\begin{align*}
	(\sigma \circ g)(\rho )(x_1,\cdots ,x_j)=\left\{
	\begin{array}{ll}
		E_0'(\rho )(x_1,\cdots ,x_j) & (\rho \in Q^nHW''(\mathfrak{A}\otimes \mathfrak{L}_i)(0^j,0))\\
		\sigma (\rho )(g(x_1),\cdots ,g(x_i)) & (\rho \in Q^nHW''(\mathfrak{A}\otimes \mathfrak{L}_i)(0^j,\ell ),\ell \geq 1)\\
		\sigma (\rho )(x_1,\cdots ,x_i) & (\rho \in Q^nHW''(\mathfrak{A}\otimes \mathfrak{L}_i)(k^j,\ell ),k \geq 1),
	\end{array}
	\right.
\end{align*}
where $E_0'\in \mathcal{R}^n_0(B)$ denotes the $Q^nW''\mathfrak{A}$-structure of $E'_0$.
Similarly, for a fiberwise $A_n$-homomorphism $h:E_n=\sigma|_{Q^nHW''(\mathfrak{A}\otimes\{n\})}\to E_n'$, the canonical composite $h\circ \sigma \in \mathcal{R}^n_i(B)$ is defined.
Then, for a fiberwise $A_n$-homomorphism $f:E\to E'$, the fiberwise $Q^nh\mathfrak{A}$-map $f\circ (s^0E)=(s^0E')\circ f$ corresponds to the canonical $A_n$-form of $f$.
Using this, the canonical composite is compatible with the composite in the homotopy category $\mathrm{ho}\mathcal{R}^n(B)$ as follows:
for a fiberwise $A_n$-homomorphism $g:E\to E'$ and a fiberwise $Q^nh\mathfrak{A}$-map $f:E'\to E''$, the 2-simplex $\sigma :=(s^0f)\circ g\in \mathcal{R}^n_2(B)$ satisfies $d^2\sigma =(s^0E')\circ g$, $d^0\sigma =f$ and $d^1\sigma =f\circ g$.
\begin{align*}
	\xymatrix{
		&  E'\ar[dr]^-{f} & \\
		E \ar[ur]^-{(s^0E')\circ g} \ar[rr]^-{f\circ g} & & E'' \\
	}
\end{align*}
Similarly, for a fiberwise $Q^nh\mathfrak{A}$-map $f:E\to E'$ and a fiberwise $A_n$-homomorphism $h:E'\to E''$, the 2-simplex $\sigma '=h\circ (s^1f)\in \mathcal{R}^n_2(B)$ satisfies $d^2\sigma '=f$, $d^0\sigma '=h\circ (s^0E')$ and $d^1\sigma '=h\circ f$.
\item
Fiberwise homotopy invariance.

Analogously to \cite[Theorem 5.25]{BV73}, if $E$ is a fiberwise $A_n$-space and $f:E'\to E$ is a fiberwise pointed homotopy equivalence between ex-fibrations, then there exists a fiberwise $A_n$-form on $E'$ such that $f$ admits a fiberwise $A_n$-form.
Also, we obtain the following as the counterpart of \cite[Theorem 5.24]{BV73}:
for a fiberwise $Q^nh\mathcal{A}$-equivalence $f:E\to E'$, the fiberwise homotopy inverse $g$ of the underlying fiberwise pointed map $f$ also admits the structure of a fiberwise $Q^nh\mathfrak{A}$-map which represents the inverse of $f$ in the homotopy category $\mathrm{ho}\mathcal{R}^n(B)$.
Similarly to \cite[Lemma 5.7]{BV73}, for fiberwise $A_n$-forms $\{m_i\}$ and $\{m_i'\}$ on a ex-fibration $E$, $\{m_i\}$ and $\{m_i'\}$ are homotopic as fiberwise $A_n$-forms on $E$ if and only if the identity map $\mathrm{id}_E:E\to E$ admits a fiberwise $A_n$-form as a fiberwise $A_n$-map $(E,\{m_i\})\to (E,\{m_i'\})$.
\item
Fiberwise localization.

Let $E$ be a fiberwise $A_n$-space over $B$ whose fibers are path-connected and $\mathcal{P}$ a set of primes.
Since the fibers of $E$ are nilpotent, there exists a fiberwise $\mathcal{P}$-localization $\ell :E\to \bar E$ \cite{May80}.
By an analogous argument to the proof of Proposition \ref{localizationofA_n}, there is a fiberwise $Q^nh\mathfrak{A}$-map $\lambda \in \mathcal{R}^n_1(B)$ such that $d^1\lambda =E$ and the underlying map of $\lambda$ is $\ell $.
Moreover, for a fiberwise $Q^nh\mathfrak{A}$-map $f:E\to E'$ such that the fibers of $E'$ are $\mathcal{P}$-local, then there exists a 2-simplex $\sigma \in \mathcal{R}^n_2(B)$ such that $d^2\sigma =\lambda$ and $d^1\sigma =f$ because the induced map $(\ell ^i)^*:\Map _B^B(\bar E^i,E')\to \Map ^B_B(E^i,E')$ between the space of pointed fiberwise maps is a weak homotopy equivalent.
This universal property implies that the fiberwise $\mathcal{P}$-localization as a fiberwise $A_n$-space is unique up to fiberwise $A_n$-equivalence.
\end{enumerate}

Now we consider the framed version.
\begin{dfn}
A fiberwise $A_n$-space $E\to B$ over a path-connected pointed space $B$ with every fiber $A_n$-equivalent to an $A_n$-space $G$ is said to be $G$\textit{-framed} if a $Q^nh\mathfrak{A}$-equivalence from $G$ to the fiber $E_*$ over the basepoint is given, which we call the \textit{framing} of $E$.
For $G$-framed fiberwise $A_n$-spaces $E$ and $E'$ over $B$, a fiberwise $Q^nh\mathfrak{A}$-map $f:E\to E'$ is said to be $G$\textit{-framed} if a 2-simplex $\sigma \in \mathcal{R}_2^n(*)$ is given and satisfies the following conditions:
\begin{enumerate}
\item $d^2\sigma$ is the framing $G\to E_*$ of $E$,
\item $d^1\sigma$ is the framing $G\to E'_*$ of $E$,
\item $d^0\sigma$ is the restriction $f_*:E_*\to E'_*$ of $f$.
\end{enumerate}
Two $G$-framed fiberwise $A_n$-spaces are said to be $G$\textit{-framed equivalent} if there exists a $G$-framed fiberwise $Q^nh\mathfrak{A}$-map between them. 
In particular, a $G$-framed fiberwise $A_n$-space over $B$ is said to be \textit{$A_n$-trivial} if it is $G$-framed equivalent to the product $B\times G$ with framing $G\cong *\times G\subset B\times G$.
\end{dfn}
The relation $G$-framed equivalence is an equivalence relation between fiberwise $A_n$-spaces by Dold's theorem and the property of fiberwise $A_n$-spaces stated above.

The following lemma will be used in the next section.
It is shown by the analogous argument to the proof of the lifting theorem \cite[Theorem 3.17]{BV73}.
\begin{lem}\label{homotopyextensioninR_2}
Let $B$ be a space.
Consider a 2-cell $\sigma\in\mathcal{R}^n_2(B)$ and 1-cells $\Sigma_2,\Sigma_0\in\mathcal{R}^n_1(I\times B)$.
If the conditions $\Sigma_2|_{\{0\}\times B}=d^2\sigma$, $\Sigma_0|_{\{0\}\times B}=d^0\sigma$ and $d^0\Sigma_2=d^1\Sigma_0$ are satisfied, then there exists a 2-cell $\Sigma\in\mathcal{R}^n_2(I\times B)$ such that $\Sigma|_{\{0\}\times B}=\sigma$, $d^2\Sigma=\Sigma_2$ and $d^0\Sigma=\Sigma_0$.
\end{lem}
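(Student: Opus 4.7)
The plan is to interpret the lemma as a relative homotopy-extension problem for fiberwise PRO-actions and to solve it by the cell-by-cell inductive construction used in the proof of the lifting theorem \cite[Theorem 3.17]{BV73}. Unpacking definitions, a 2-simplex $\Sigma\in\mathcal{R}^n_2(I\times B)$ is a fiberwise action of the PRO $Q^nHW''(\mathfrak{A}\otimes\mathfrak{L}_2)$ on a triple of ex-fibrations over $I\times B$. The hypotheses prescribe this action on three subsystems: on the sub-PROs $Q^nHW''(\mathfrak{A}\otimes\{0\!\to\!1\})$ and $Q^nHW''(\mathfrak{A}\otimes\{1\!\to\!2\})$, corresponding to the faces $d^2$ and $d^0$, over all of $I\times B$ (giving $\Sigma_2$ and $\Sigma_0$); and on the full PRO $Q^nHW''(\mathfrak{A}\otimes\mathfrak{L}_2)$ but only over the subspace $\{0\}\times B\subset I\times B$ (giving $\sigma$). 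The three compatibility equalities ensure that these prescriptions agree on every overlap and thus define a coherent partial action.

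Next I would organize the as-yet-undefined cells of $Q^nHW''(\mathfrak{A}\otimes\mathfrak{L}_2)$---those indexed by trees whose leaves carry both of the colors $0$ and $2$, since monochromatic trees have already been handled---and filter them by tree complexity, say by the number of internal edges together with the total arity. The inductive step then reduces to the following local problem: given such a cell $C$ of the operation space such that a compatible partial action is already defined on $(C\times\{0\}\times B)\cup(\partial C\times I\times B)$, extend it over $C\times I\times B$. The $W''$-construction endows each such $C$ with canonical deformation retractions onto unions of its ``length-$0$'' and ``length-$1$'' faces---the same mechanism used to produce $\tilde X$ in the proof of Proposition \ref{replacement} and in \cite[Proposition 4.6]{BV73}---so that the pair $(C,\partial C)$ behaves like a relative CW pair. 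The required extension is then produced by the fiberwise pointed covering homotopy extension property built into the definition of an ex-fibration (cf.\ Lemma \ref{lem_relativelifting}).

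I expect the main obstacle to be combinatorial: one must fix a filtration of $Q^nHW''(\mathfrak{A}\otimes\mathfrak{L}_2)$ for which the PRO-composition relations respect the attaching order, so that at each stage the gluing face is genuinely a deformation retract on which the already-constructed data is compatible. This is precisely the bookkeeping carried out in the proof of \cite[Theorem 3.17]{BV73}; the present setting differs only by replacing ordinary pointed spaces with fiberwise pointed spaces over $I\times B$, an adjustment absorbed by the ex-fibration hypothesis and Lemma \ref{lem_relativelifting}.
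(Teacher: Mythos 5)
Your proposal is correct and follows exactly the route the paper takes: the paper proves this lemma by a one-line appeal to the argument of the lifting theorem \cite[Theorem 3.17]{BV73}, which is precisely the cell-by-cell extension over the operation spaces of $Q^nHW''(\mathfrak{A}\otimes\mathfrak{L}_2)$ that you describe, using the deformation retractions built into the $W''$-construction and the fiberwise (ex-fibration) homotopy extension property. Your write-up in fact supplies more detail than the paper does.
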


\section{Classification theorem for framed fiberwise $A_n$-spaces}\label{classificationsection}
The author has shown the classification theorem for (unpointed and unframed) fiberwise $A_n$-spaces \cite[Theorem 5.7]{Tsu12a}, which is used to show the finiteness of fiberwise $A_n$-types of adjoint group bundles (see Section \ref{gauge}).
In this section, we show the classification theorem for framed fiberwise $A_n$-spaces.

First, we arrange the classification theorem for our fiberwise $A_n$-spaces.
Let $G$ be an $A_n$-space.
Denote the set of equivalence classes of fiberwise $A_n$-spaces over a path-connected pointed space $B$ whose fibers are $A_n$-equivalent to $G$ by $\mathcal{E}^{A_n}G(B)$.
For an ex-fibration $E$ over $B$ whose fibers are pointed homotopy equivalent to $G$, define a space $M_n[E]$ by
\begin{align*}
	M_n[E]=\coprod_{b\in B}\left. \left\{\, \{m_i\} \in \prod_{i=2}^n \Map(\mathcal{K}_i\times E_b^{i},E_b)\, \right| 
	\begin{array}{l}
		\{m_i\}:{\rm an\,}A_n{\rm \mathchar`-form\, of\,}E_b{\rm \, such\, that\,} \\
		E_b{\rm \,and\,}G{\rm \,are\,}A_n{\rm \mathchar`-equivalent} \\
	\end{array}
	\right\}
\end{align*}
as a set, which is topologized as a subspace of the fiber product of appropriate fiberwise mapping spaces over $B$. 
There is a sequence of natural projections 
\begin{align*}
	M_n[E]\longrightarrow M_{n-1}[E]\longrightarrow \cdots \longrightarrow M_2[E]\longrightarrow M_1[E]=B,
\end{align*}
each of which is a Hurewicz fibration because of the homotopy extension property of the inclusion $\partial \mathcal{K}_i\hookrightarrow \mathcal{K}_i$ and the fiberwise well-pointedness of $E$.
Further, by an easy observation, we obtain the homotopy fiber sequence
\begin{align*}
	\Omega _0^{n-2}\Map _0(G^{\wedge n},G)\longrightarrow M_n[E]\longrightarrow M_{n-1}[E],
\end{align*}
where $G^{\wedge n}$ denotes the $n$-fold smash product $G\wedge \cdots \wedge G$, $\Map _0(X,Y)$ the space of all pointed maps $X\to Y$ and $\Omega ^m_0X$ the space of all pointed maps $S^m\to X$ homotopic to the constant map.
By the exponential law, each fiberwise $A_n$-form on $E$ corresponds to a section of the projection $M_n[E]\to B$.
The pullback $E_n[E]$ of $E$ by the projection $M_n[E]\to B$ has a natural fiberwise $A_n$-form such that the restricted $A_n$-form of the fiber over a point $\{m_i\}\in M_n[E]$ is $\{m_i\}$.
Let $E_1(G)\to M_1(G)=BHG$ be the universal ex-fibration with the fibers pointed homotopy equivalent to $G$.
Let us denote $M_n(G)=M_n[E_1(G)]$ and $E_n(G)=E_n[E_1(G)]$.

Using the properties of fiberwise $A_n$-spaces explained in the previous section, one can prove the following theorem by a completely parallel argument in \cite[Section 5]{Tsu12a}.
Denote the free homotopy set between spaces $X$ and $Y$ by $[X,Y]$. 
\begin{thm}[The classification theorem for fiberwise $A_n$-spaces]\label{unframedclassification}
Let $n$ be a finite positive integer and $G$ a well-pointed $A_n$-space of the homotopy type of a CW complex.
Then, there exists a fiberwise $A_n$-space $E_n(G)\to M_n(G)$ with fibers $A_n$-equivalent to $G$ such that the map $[B,M_n(G)]\to \mathcal{E}^{A_n}G(B)$ defined by the correspondence $[f]\mapsto [f^*E_n(G)]$ is bijective for any well-pointed space $B$ of the homotopy type of a connected CW complex.
\end{thm}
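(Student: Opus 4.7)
The plan is to mimic the argument of \cite[Section 5]{Tsu12a}, proceeding by induction on $n$ while exploiting at each step the classification of the underlying ex-fibrations together with the observation that fiberwise $A_n$-forms on a fixed ex-fibration $E \to B$ are in bijection with sections of the Hurewicz fibration $M_n[E] \to B$. For the base case $n=1$ one sets $M_1(G) = BHG$ and lets $E_1(G) \to BHG$ be the universal ex-fibration of the classification theorem of Section \ref{framedfiberwisehomotopytheory}; since an $A_1$-form is no data, the required bijection is just the classification of ex-fibrations with fibers pointed homotopy equivalent to $G$. For the inductive step, the construction $M_n[-]$ is natural under pullback, so for any map $f : B \to M_1(G)$ there is a canonical identification $f^* M_n(G) \cong M_n[f^* E_1(G)]$, and lifts $B \to M_n(G)$ of $f$ correspond to fiberwise $A_n$-forms on $f^* E_1(G)$; the tautological $A_n$-form over $M_n(G)$ supplies $E_n(G)$ itself.

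For surjectivity, given a fiberwise $A_n$-space $E \to B$ with fibers $A_n$-equivalent to $G$, classify its underlying ex-fibration by some $\bar f : B \to M_1(G)$ and fix a pointed fiberwise equivalence $\bar f^* E_1(G) \simeq E$. Transporting the given $A_n$-form on $E$ along this equivalence via the fiberwise homotopy invariance recalled in Section \ref{qcatAn} yields an $A_n$-form on $\bar f^* E_1(G)$, hence a section of $\bar f^* M_n(G) \to B$, hence a lift $f : B \to M_n(G)$ of $\bar f$; by construction $f^* E_n(G)$ is fiberwise $A_n$-equivalent to $E$.

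For injectivity, suppose $f_0^* E_n(G)$ and $f_1^* E_n(G)$ are fiberwise $A_n$-equivalent. The underlying ex-fibrations are unframed equivalent, so the classification theorem provides a homotopy $\bar F : I \times B \to M_1(G)$ between $\bar f_0$ and $\bar f_1$. Applying Lemma \ref{lem_relativelifting} along the tower $M_n(G) \to M_{n-1}(G) \to \cdots \to M_1(G)$, lift $\bar F$ relative to $\{0\} \times B$ to obtain a homotopy from $f_0$ to a new representative $f_1'$ whose composition with $M_n(G) \to M_1(G)$ equals $\bar f_1$, thereby reducing to the case $\bar f_0 = \bar f_1 =: \bar f$. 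Then $f_0$ and $f_1$ become two sections of $\bar f^* M_n(G) \to B$, i.e.\ two fiberwise $A_n$-forms on $\bar f^* E_1(G)$, and the given fiberwise $A_n$-equivalence between the pullbacks is represented by a 1-simplex together with a structural 2-simplex in the quasi-category $\mathcal{R}^n(B)$; Lemma \ref{homotopyextensioninR_2} together with the homotopy invariance of fiberwise $A_n$-structures converts this datum into an actual path of fiberwise $A_n$-forms on $\bar f^* E_1(G)$, which is precisely a homotopy $f_0 \simeq f_1$ in $[B, M_n(G)]$.

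The main obstacle is this last translation: turning an abstract fiberwise $A_n$-equivalence into a genuine path of $A_n$-forms on a single ex-fibration requires iterating Lemma \ref{homotopyextensioninR_2} and the homotopy extension property of $(\mathcal{K}_i, \partial \mathcal{K}_i)$ along the whole tower, invoking fiberwise well-pointedness at each stage to extend partial liftings from $\partial \mathcal{K}_i \times E^{\times_B i}$, exactly as in the unframed argument of \cite[Section 5]{Tsu12a}. Once this technical conversion is set up, the parallelism with the unframed case is complete and the inductive step closes.
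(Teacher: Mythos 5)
Your proposal is correct and follows essentially the same route as the paper, which itself only sketches this theorem by deferring to the parallel argument of \cite[Section 5]{Tsu12a}: you use the same tower $M_n[E]\to\cdots\to M_1[E]=B$ of Hurewicz fibrations, the same identification of fiberwise $A_n$-forms with sections of $M_n[E]\to B$, and the same key translation (the counterpart of \cite[Lemma 5.3]{Tsu12a}, stated after Theorem \ref{thm_cls_fr_fw_A_n}) between fiberwise $Q^nh\mathfrak{A}$-equivalences over the identity and homotopies of sections. The only point to make explicit is that in the injectivity step the homotopy $\bar F$ must be the one realizing the \emph{given} underlying fiberwise pointed equivalence (via the associated principal fibration and Dold's theorem), so that after lifting along the tower the remaining comparison is over a map fiberwise homotopic to the identity and the section lemma applies.
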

Now, we construct the `associated principal fibration' for $E_n[E]\to M_n[E]$.
Again, let $G$ be an $A_n$-space.
For an ex-fibration $E$ over $B$ whose fibers are pointed homotopy equivalent to $G$, define a space $C_n[E]$ as
\[
	C_n[E]=\coprod _{b\in M_n[E]}\left\{\, f:Q^nh\mathfrak{A} \mathrm{\mathchar`- equivalence},d^1f=G,d^0f=(E_n[E])_b \right\}.
\]
In particular, we denote $C_n(G)=C_n[E_1(G)]$.
Then there exists a homotopy fibration
\[
	F_{A_n}G\longrightarrow C_n[E]\longrightarrow M_n[E],
\]
where $F_{A_n}G$ is the space consisting of self $Q^nh\mathfrak{A}$-equivalences on $G$.
Similarly to \cite[Proposition 5.8]{Tsu12a}, the fiber of the Hurewicz fibration $C_n[E]\to C_{n-1}[E]$ is contractible.
Then $C_n(G)$ is weakly contractible since $C_1(G)=EHG$ is weakly contractible.

For an $A_n$-space $G$, denote the set of equivalence classes of $G$-framed fiberwise $A_n$-spaces over a path-connected pointed space $B$ by $\mathcal{E}^{A_n}G(B)_0$.
Using Lemma \ref{lem_relativelifting}, one can show that a pointed map $f:B\to B'$ induces the map $f^*:\mathcal{E}^{A_n}G(B')_0\to \mathcal{E}^{A_n}G(B)_0$ which depends only on the pointed homotopy class of $f$.
It is our aim of this section to prove the following classification theorem.
\begin{thm}[The classification theorem for framed fiberwise $A_n$-spaces]\label{thm_cls_fr_fw_A_n}
Let $n$ be a finite positive integer and $G$ an $A_n$-space of the homotopy type of a CW complex.
Then, there exists a $G$-framed fiberwise space $E_n(G)\to M_n(G)$ such that the map $[B,M_n(G)]_0\to \mathcal{E}^{A_n}G(B)_0$ defined by the correspondence $[f]\mapsto [f^*E_n(G)]$ is bijective for any pointed space $B$ pointed homotopy equivalent to a connected CW complex.
\end{thm}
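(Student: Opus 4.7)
The plan is to mimic the proof strategy of the classification theorem for framed ex-fibrations in Section~\ref{framedfiberwisehomotopytheory}, replacing May's bar construction for the monoid $HG$ by the bar construction for $F_{A_n}G$ (the grouplike monoid of self $Q^nh\mathfrak{A}$-equivalences of $G$), and using the properties of the quasi-category $\mathcal{R}^n(B)$ established in Section~\ref{qcatAn}. First, I take as basepoint of $M_n(G)$ the element corresponding to the given $A_n$-structure on $G$; pulling back the fiber of $E_n(G)$ at this basepoint gives a canonical framing $G\to (E_n(G))_{\ast}$, so $E_n(G)\to M_n(G)$ is naturally a $G$-framed fiberwise $A_n$-space. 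The map $\Phi:[B,M_n(G)]_0\to \mathcal{E}^{A_n}G(B)_0$, $[f]\mapsto [f^*E_n(G)]$ with the induced framing, is well-defined by Lemma~\ref{lem_relativelifting} together with the standard covering-homotopy argument for the fiberwise $A_n$-structure; that $\Phi$ lands in $\mathcal{E}^{A_n}G(B)_0$ rather than its unpointed analogue uses that $f$ is pointed, hence pulls back the framing at the basepoint.

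To construct an inverse $\Psi$, for a $G$-framed fiberwise $A_n$-space $E\to B$ let
\[
	P_nE=\coprod_{b\in B}\{\, u:G\to E_b\,|\,u\text{ is a }Q^nh\mathfrak{A}\text{-equivalence}\,\},
\]
which is a Hurewicz fibration over $B$ with fibers homotopy equivalent to $F_{A_n}G$; the given framing supplies a distinguished basepoint in $(P_nE)_{\ast}$, making $P_nE$ into a pointed principal $F_{A_n}G$-quasi-fibration. Then I use the diagram of bar constructions
\[
	\xymatrix{
		P_nE \ar[d] & B(P_nE,F_{A_n}G,F_{A_n}G) \ar[l]_-{\simeq} \ar[d] \ar[r] & EF_{A_n}G \ar[d] \\
		B \ar@/_2pc/[rr]^-{\psi} & B(P_nE,F_{A_n}G,\ast) \ar[l]_-{\simeq} \ar[r] & BF_{A_n}G
	}
\]
exactly as in Section~\ref{framedfiberwisehomotopytheory}. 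Since $C_n(G)\to M_n(G)$ is a principal $F_{A_n}G$-fibration with weakly contractible total space, it is a model for $EF_{A_n}G\to BF_{A_n}G$, so $\psi$ can be regarded as a pointed map $B\to M_n(G)$ and I set $\Psi[E]=[\psi]$.

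The main obstacle is the well-definedness of $\Psi$: a $G$-framed $Q^nh\mathfrak{A}$-map $f:E\to E'$ only compares framings through a 2-simplex $\sigma\in\mathcal{R}^n_2(\ast)$, not strictly, so the induced map $B(P_nf,\mathrm{id}_{F_{A_n}G},\ast)$ need not be pointed on the nose. To handle this, I adapt the workaround used for framed ex-fibrations: I replace $B(P_nE,F_{A_n}G,\ast)$ by $B(P_nE,F_{A_n}G,\ast)\vee I$, basepointed at $1\in I$, and extend $f$ to a map to $B(P_nE',F_{A_n}G,\ast)$ using the path in $P_nE'$ between $f_{\ast}\circ(\text{framing of }E)$ and the framing of $E'$ provided by $\sigma$. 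The essential fact enabling this is Lemma~\ref{homotopyextensioninR_2}, which propagates the 2-simplex $\sigma$ over the parameter $I$ and furnishes the required homotopy of framings in a manner compatible with the fiberwise $A_n$-structure; the same lemma will also be used to check that $\Psi$ descends through pointed homotopies of framed $Q^nh\mathfrak{A}$-maps.

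Once well-definedness is established, $\Phi\circ\Psi=\mathrm{id}$ and $\Psi\circ\Phi=\mathrm{id}$ follow by the same argument as in May's proof of \cite[Theorem~9.2]{May75} and the unframed theorem \cite[Theorem~5.7]{Tsu12a}: both composites are computed by the universal property of the bar construction together with the uniqueness (up to contractible choice) of framings encoded by the weak contractibility of $C_n(G)$. The only new ingredient beyond the unframed case is the basepoint bookkeeping, and this is exactly what Lemma~\ref{homotopyextensioninR_2} controls.
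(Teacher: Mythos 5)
Your overall strategy --- running May's bar-construction classification with $HG$ replaced by $F_{A_n}G$ --- has a genuine gap at its foundation: $F_{A_n}G$ is not a topological monoid. A self $Q^nh\mathfrak{A}$-equivalence of $G$ is a $Q^nHW''(\mathfrak{A}\otimes \mathfrak{L}_1)$-structure, i.e.\ an edge of the quasi-category $\mathcal{R}^n(*)$, and the composition of such edges is defined only up to the contractible choice of a filling $2$-simplex; it is neither canonical nor strictly associative. For the same reason your $P_nE$ carries no strict right $F_{A_n}G$-action, so the two-sided bar constructions $B(P_nE,F_{A_n}G,F_{A_n}G)$ and $B(P_nE,F_{A_n}G,*)$ are not defined as written, and the identification of $C_n(G)\to M_n(G)$ with $EF_{A_n}G\to BF_{A_n}G$ is only an a posteriori weak equivalence (coming from the homotopy fibration $F_{A_n}G\to C_n(G)\to M_n(G)$ and the weak contractibility of $C_n(G)$), not a construction you can feed into May's machinery. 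Rectifying this would require exactly the higher-categorical strictification the paper explicitly declines to carry out. This problem does not arise in Section \ref{framedfiberwisehomotopytheory}, where $FX$ and $HX$ are honest monoids under composition of maps; it is the reason the universal space $M_n(G)$ is built as a tower of section spaces over $BHG$ rather than as $BF_{A_n}G$.

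The paper sidesteps this by deriving the framed statement from the already established unframed Theorem \ref{unframedclassification}. For surjectivity: given a framed $E$, one first takes an unframed classifying map $\bar f$ and a fiberwise $Q^nh\mathfrak{A}$-equivalence $f$ covering it, chooses a $2$-simplex $\sigma\in\mathcal{R}^n_2(*)$ relating $f_*$ to the two framings, uses the weak contractibility of $C_n(G)$ to join $d^1\sigma$ to the framing of $E_n(G)$ by a path, and then applies Lemma \ref{homotopyextensioninR_2} to propagate $\sigma$ along that path; the resulting homotopy of $f_*$ extends to a homotopy of $f$ covering a homotopy of $\bar f$, and its endpoint is a framed equivalence over a pointed classifying map. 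Injectivity is then handled by the section-space argument of \cite[Proposition 5.6]{Tsu12a} together with a basepointed version of the section-homotopy lemma. You correctly identified the two essential ingredients (weak contractibility of $C_n(G)$ and Lemma \ref{homotopyextensioninR_2}), but they must be deployed to correct a classifying map supplied by the unframed theorem, not to build a principal-fibration classification from scratch.
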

\begin{proof}
We fix a $G$-framing of the universal fiberwise $A_n$-space $E_n(G)$.
First, we see that the map $[B,M_n(G)]_0\to \mathcal{E}^{A_n}G(B)_0$ is surjective.
For a $G$-framed fiberwise $A_n$-space $E\to B$, by Theorem \ref{unframedclassification}, there are maps $f:E\to E_n(G)$ and $\bar f:B\to M_n(G)$ such that $f$ covers $\bar f$ and $f$ induces a fiberwise $Q^nh\mathfrak{A}$-equivalence $E\to \bar f^*M_n(G)$.
Take a 2-cell $\sigma\in\mathcal{R}^n_2(*)$ such that $d^2\sigma:E_*\to G$ represents the inverse of the framing $G\to E_*$ in the homotopy category and $d^0\sigma=f_*:E_*\to E_n(G)_*$ as above.
There is a 1-cell $\Sigma^0\in\mathcal{R}^n_1(I)$ defined by a path $I\to PE_n(G)$ from $d^1\sigma$ to the framing of $E_n(G)$ since $PE_n(G)$ is weakly contractible.
Denote the stationary homotopy on $d^2\sigma$ by $\Sigma^2\in\mathcal{R}^n_1(I)$.
Then by Lemma \ref{homotopyextensioninR_2}, there exists a 2-sell $\Sigma\in\mathcal{R}^n_2(I)$ such that $\Sigma|_{\{0\}}=\sigma$, $d^2\Sigma=\Sigma_2$ and $d^0\Sigma=\Sigma_0$.
The homotopy $d^1\Sigma$ extends to a homotopy $F:I\times E\to E_n(G)$ from $f$ covering some homotopy $\bar F:I\times B\to M_n(G)$ from $\bar f$.
Denote $g=F|_{\{1\}\times E}$ and $\bar g=\bar F|_{\{1\}\times B}$.
Since $d^2\Sigma|_{\{1\}}:E^*\to G$ represents the inverse of the framing, $d^0\Sigma|_{\{1\}}:G\to E_n(G)$ is the framing and $d^1\Sigma|_{\{1\}}=g_*$, the map $g$ induces a $G$-framed fiberwise $Q^nh\mathfrak{A}$-map $E\to\bar g^*E_n(G)$.
Thus we obtain the equality $[\bar g^*E_n(G)]=[E]$ in $\mathcal{E}^{A_n}G(B)_0$.

The injectivity of the map $[B;M_n(G)]_0\to\mathcal{E}_0^{A_n}G(B)_0$ is proved similarly to \cite[Proposition 5.6]{Tsu12a}.
The counterpart of \cite[Lemma 5.3]{Tsu12a} is stated as follows.
\end{proof}
\begin{lem}
Let $E\to B$ be an ex-fibration and $\{m_i\}_{i=2}^n$ and $\{m_i'\}_{i=2}^n$ be fiberwise $A_n$-forms on $E$ which restrict to the same $A_n$-form on the fiber over the basepoint.
Then the identity map $E\to E$ is a framed fiberwise $Q^nh\mathfrak{A}$-map $(E,\{m_i\})\to (E,\{m_i'\})$ if and only if $\{m_i\}$ and $\{m_i'\}$ are homotopic as sections of $M_n[E]\to B$ through a homotopy which is constant on the basepoint.
\end{lem}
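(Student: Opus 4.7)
The plan is to exploit the correspondence between fiberwise $A_n$-map structures on $\mathrm{id}_E$ and homotopies of fiberwise $A_n$-forms recorded in item (5) of Section \ref{qcatAn}, and then extract the constancy condition on the basepoint from the 2-simplex $\sigma \in \mathcal{R}_2^n(*)$ that witnesses the framed condition. Throughout the argument, we may take $G := E_*$ with its common $A_n$-form and choose the identity $\mathrm{id}_{E_*}$ as the framing on each side, since both $\{m_i\}$ and $\{m_i'\}$ restrict to the same $A_n$-form on $E_*$.

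For the $\Leftarrow$ direction, let $H : I \times B \to M_n[E]$ be a homotopy of sections from $\{m_i\}$ to $\{m_i'\}$ constant on $\{*\}$. Pulling back the tautological fiberwise $A_n$-form along $H$ equips the ex-fibration $I \times E \to I \times B$ with a fiberwise $A_n$-form interpolating between $\{m_i\}$ and $\{m_i'\}$. Via the natural homeomorphism $\mathcal{J}_r \cong LW''(\mathfrak{A} \otimes \mathfrak{L}_1)(0^r, 1)$ of item (2) of Section \ref{qcatAn}, the cylinder interpretation translates this into a fiberwise $Q^nh\mathfrak{A}$-map structure on $\mathrm{id}_E : (E, \{m_i\}) \to (E, \{m_i'\})$ in the standard way. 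Because $H$ is constant on the basepoint, the restriction of this structure to $E_*$ collapses to the canonical $Q^nh\mathfrak{A}$-structure of the $A_n$-homomorphism $\mathrm{id}_{E_*}$; a suitable degenerate 2-simplex in $\mathcal{R}_2^n(*)$ built from $\mathrm{id}_{E_*}$ then meets the three boundary requirements and serves as the required $\sigma$.

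For the $\Rightarrow$ direction, assume the identity supports a framed fiberwise $Q^nh\mathfrak{A}$-map structure with witness $\sigma \in \mathcal{R}_2^n(*)$. Forgetting the framing, item (5) of Section \ref{qcatAn} yields a homotopy $H : I \times B \to M_n[E]$ of sections joining $\{m_i\}$ to $\{m_i'\}$. Its restriction $\lambda := H|_{I \times \{*\}}$ is a loop in the fiber $M_n[E]_*$ based at the common value $\{m_i\}|_* = \{m_i'\}|_*$. Since $d^2 \sigma$ and $d^1 \sigma$ are both the chosen identity framing, the cylinder interpretation of $\sigma$ supplies precisely a null-homotopy of $\lambda$ rel endpoints inside $M_n[E]_*$. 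Combining this null-homotopy with Lemma \ref{homotopyextensioninR_2} and the homotopy extension property of the well-pointed inclusion $\{*\} \hookrightarrow B$, and working inductively along the tower of Hurewicz fibrations $M_n[E] \to M_{n-1}[E] \to \cdots \to M_2[E] \to B$, we produce a homotopy rel endpoints from $H$ to a new homotopy $H'$ which is stationary on $\{*\}$; this $H'$ is the required homotopy of sections constant on the basepoint.

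The main obstacle lies in the $\Rightarrow$ direction: one must convert the purely combinatorial datum $\sigma$ into a concrete null-homotopy inside $M_n[E]_*$ and then propagate it to a homotopy of $H$ while preserving the section property over all of $B$. Each step reduces to a standard lifting-extension argument using the Hurewicz fibration property of $M_j[E] \to M_{j-1}[E]$ and the well-pointedness of $* \in B$, but some careful bookkeeping is needed to align the simplicial faces of $\sigma$ with the topological paths in $M_n[E]_*$ so that Lemma \ref{homotopyextensioninR_2} applies cleanly at every stage of the tower.
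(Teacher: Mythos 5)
Your argument is correct and follows essentially the route the paper intends: the paper states this lemma without proof as ``the counterpart of [Tsu12a, Lemma 5.3]'', relying on exactly the ingredients you use --- the correspondence of item (5) of Section \ref{qcatAn} between $A_n$-map structures on $\mathrm{id}_E$ and homotopies of sections of $M_n[E]\to B$, the translation of the witnessing $2$-simplex $\sigma$ into a vertical null-homotopy over the basepoint, and a relative lifting argument along the fibration tower $M_n[E]\to\cdots\to B$ using the well-pointedness of $B$. The bookkeeping you flag (matching the faces of $\sigma$ with the restriction of the section homotopy over $\ast$) is real but is handled by the same Boardman--Vogt lifting machinery (Lemma \ref{homotopyextensioninR_2}) that the paper invokes throughout, so no essential step is missing.
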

Similarly to \cite[Proposition 6.3]{Tsu12a}, we have the following proposition for fiberwise localizations.
\begin{prp}\label{A_nfiberwiselocalization}
Let $G$ be a path-connected $A_n$-space.
Denote the classifying map of the fiberwise $\mathcal{P}$-localization of $E_n(G)$ by $\lambda :M_n(G)\to M_n(G_{\mathcal{P}})$ as a framed fiberwise $A_n$-space.
Then for a ($G$-framed) fiberwise $A_n$-space $E$ over $B$ classified by $\alpha :B\to M_n(G)$, the fiberwise $\mathcal{P}$-localization of $E$ is classified by $\lambda \circ \alpha$.
Moreover, if $G$ is homotopy equivalent to a finite complex and $r\geq 2$, then the induced homomorphism $\lambda _*:\pi _r(M_n(G))\to \pi _r(M_n(G_{\mathcal{P}}))$ is a $\mathcal{P}$-localization of the abelian group $\pi _r(M_n(G))$.
\end{prp}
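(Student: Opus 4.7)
For the first statement, the plan is to exploit the uniqueness of fiberwise $\mathcal{P}$-localizations of fiberwise $A_n$-spaces (item (6) of Section \ref{qcatAn}). By construction, $\lambda$ classifies a fiberwise $Q^nh\mathfrak{A}$-map $E_n(G)\to \lambda^*E_n(G_{\mathcal{P}})$ which is a fiberwise $\mathcal{P}$-localization. Pulling back by $\alpha$, one obtains a fiberwise $Q^nh\mathfrak{A}$-map
\[
E=\alpha^*E_n(G)\longrightarrow \alpha^*\lambda^*E_n(G_{\mathcal{P}})=(\lambda\circ\alpha)^*E_n(G_{\mathcal{P}})
\]
which restricts fiberwise to a $\mathcal{P}$-localization of the original fiber (pullback does not alter fibers). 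By uniqueness, this is \emph{the} fiberwise $\mathcal{P}$-localization of $E$, and Theorem \ref{thm_cls_fr_fw_A_n} then identifies its classifying map as $\lambda\circ\alpha$.

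For the second statement, the plan is to induct on $n$ using the tower of Hurewicz fibrations
\[
M_n(G)\to M_{n-1}(G)\to\cdots\to M_1(G)=BHG
\]
from Section \ref{classificationsection} together with its $G_{\mathcal{P}}$-counterpart, which are related at each stage by $\lambda$. The induction step reduces, via the morphism of homotopy fiber sequences
\[
\Omega_0^{n-2}\Map_0(G^{\wedge n},G)\to M_n(G)\to M_{n-1}(G),
\]
to a five-lemma argument on the long exact homotopy sequences. The inductive hypothesis handles the base map. For the fiber, since $G$ is homotopy equivalent to a finite CW complex and $G_{\mathcal{P}}$ is nilpotent and $\mathcal{P}$-local, the componentwise $\mathcal{P}$-localization theorem for mapping spaces out of a finite complex \cite{HMR75} gives that $\Map_0(G^{\wedge n},G)\to \Map_0(G^{\wedge n},G_{\mathcal{P}})\simeq\Map_0(G_{\mathcal{P}}^{\wedge n},G_{\mathcal{P}})$ is a $\mathcal{P}$-localization on each path component, and this is preserved by $\Omega_0^{n-2}$, giving a $\mathcal{P}$-localization on $\pi_r$ for all $r\geq 0$.

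The base case $n=1$ amounts, via $\pi_r(BHX)=\pi_{r-1}(HX)$, to the assertion that the identity component of $HG\subset\Map_0(G,G)$ is mapped to the identity component of $HG_{\mathcal{P}}$ by a $\mathcal{P}$-localization on higher homotopy groups, which again follows from \cite{HMR75} applied to the components of $\Map_0(G,G)$.

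The main obstacle I anticipate is the careful bookkeeping of path components of mapping spaces: since the localization theorem of \cite{HMR75} is intrinsically componentwise, one must verify that the component of $\Map_0(G^{\wedge n},G_{\mathcal{P}})$ containing the constant map is correctly identified with the $\mathcal{P}$-localization of the corresponding component for $G$, and likewise that the fiber inclusion in the tower sits compatibly with these identifications on both sides. This is essentially the content of the analogous argument in \cite[Proposition 6.3]{Tsu12a}, whose strategy we will follow.
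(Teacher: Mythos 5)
Your proposal matches the paper's argument: the first statement is obtained exactly as you describe, from the uniqueness of the fiberwise $\mathcal{P}$-localization as a fiberwise $A_n$-space together with compatibility of everything with pullback (the paper simply defers to \cite[Proposition 6.3]{Tsu12a}), and the second statement by induction on $n$ over the fibration tower $\Omega^{n-2}\Map_0(G^{\wedge n},G)\to M_n(G)\to M_{n-1}(G)$ with the fiber handled by the componentwise localization theorem \cite[Theorem 3.11]{HMR75}. The only (minor) divergence is at the base case $n=1$: the paper cites \cite[Theorem 4.1]{May80}, which directly asserts that $\lambda_*:\pi_r(M_1(G))\to\pi_r(M_1(G_{\mathcal{P}}))$ is a $\mathcal{P}$-localization for finite $G$ and $r\geq 2$, whereas your re-derivation via $\pi_r(BHG)\cong\pi_{r-1}(HG)$ tacitly identifies the classifying map of the fiberwise localization with the delooping of the localization map on mapping spaces --- that identification is exactly the content of May's theorem, so you should quote it rather than treat it as formal.
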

\begin{rem}
The finiteness condition for $G$ is used as follows.
May \cite[Theorem 4.1]{May80} proved that $\lambda _*:\pi _r(M_1(G))\to \pi _r(M_1(G_{\mathcal{P}}))$ is a $\mathcal{P}$-localization for $r\geq 2$ when $G$ is a finite complex.
By the fibration
\[
	\Omega ^{n-2}\Map _0(G^{\wedge n},G)\longrightarrow M_n(G)\longrightarrow M_{n-1}(G)
\]
and the fact that $\Map _0(G^{\wedge n},G)\to \Map _0(G^{\wedge n}_{\mathcal{P}},G_{\mathcal{P}})$ $\mathcal{P}$-localizes each connected components for a connected finite complex $G$ \cite[Theorem 3.11]{HMR75}, one can inductively show that $\lambda _*:\pi _r(M_n(G))\to\pi _r(M_n(G_{\mathcal{P}}))$ is a $\mathcal{P}$-localization of the abelian group $\pi _r(M_n(G))$ if $G$ is homotopy equivalent to a finite complex and $r\geq 2$.
\end{rem}

\section{An application to $A_n$-types of gauge groups}\label{gauge}
Let us recall elementary facts on gauge groups.
In this section, we assume that every principal bundle have the basepoint in the fiber over the basepoint of the base space.
For a principal $G$-bundle $P$ over a pointed finite CW complex $B$, the \textit{gauge group} $\mathcal{G}(P)$ of $P$ is the topological group consisting of unpointed $G$-equivariant self maps $P\to P$ covering the identity on $B$, which is topologized by the compact open topology.
Note that $\mathcal{G}$ is a contravariant functor from the category of principal $G$-bundles and bundle maps to the one of topological groups and continuous homomorphisms.
We denote the restriction map on the fiber over the basepoint by $ev:\mathcal{G}(P)\to \mathcal{G}(P|_*)\cong G$ and the kernel of $ev:\mathcal{G}(P)\to G$ by $\mathcal{G}_0(P)$.
In other words, the subgroup $\mathcal{G}_0(P)\subset \mathcal{G}(P)$ consists of pointed $G$-equivariant self maps $P\to P$ covering the identity map on $B$.

Gottlieb \cite{Got72} proved that the classifying space $B\mathcal{G}(P)$ of $\mathcal{G}(P)$ has the homotopy type of the connected component $\Map (B,BG;\alpha )\subset \Map (B,BG)$ of the basepoint free mapping space containing the classifying map $\alpha :B\to BG$ of $P$.
Similarly, $B\mathcal{G}_0(P)$ has the homotopy type of the connected component $\Map _0(B,BG;\alpha )\subset \Map _0(B,BG)$ of the pointed mapping space containing $\alpha$.
Moreover, there exists the homotopy fiber sequence:
\begin{align*}
\mathcal{G}_0(P)\to \mathcal{G}(P)\xrightarrow{ev}G\xrightarrow{\delta }\Map _0(B,BG;\alpha )\to \Map (B,BG;\alpha )\xrightarrow{ev}BG,
\end{align*}
where $ev:\Map (B,BG;\alpha )\to BG$ is the evaluation map on the basepoint.

Take the fiber bundle $\ad P=P\times _GG$ associated to $P$, where the left $G$-action on $G$ is given by conjugation.
We call $\ad P$ the \textit{adjoint group bundle} of $P$.
The adjoint group bundle $\ad P$ is a fiberwise topological group.
Then the space $\varGamma (\ad P)$ of sections is a topological group with the multiplication induced from the fiberwise topological group structure of $\ad P$.
In fact, the gauge group $\mathcal{G}(P)$ is naturally isomorphic to $\varGamma (\ad P)$ as a topological group.
Note that the basepoint of $P$ defines the natural $G$-framing $\ad P|_*\cong G$ which is an isomorphism of topological groups.
\begin{rem}
The finiteness theorem on fiberwise $A_n$-types of adjoint group bundles \cite[Theorem 8.6]{Tsu12a} also holds for framed fiberwise $A_n$-types.
The proof is almost same as the original one except for that we need to use the fibration
\begin{align*}
	\Omega _0^{n-2}\Map _0(G^{\wedge n},G)\longrightarrow M_n(G)\longrightarrow M_{n-1}(G).
\end{align*}
In particular, for an adjoint group bundle $\ad P$ over a sphere $S^r$ for $f\geq 1$, the classifying map in $\pi_r(M_n(G))$ has finite order.
\end{rem}

We shall investigate gauge groups using the localization technique.
Though a localization of a topological group is not a topological group in general, a localization of it has a natural $A_\infty$-form by Proposition \ref{localizationofA_n}.
But, as explained before, it is difficult to handle homotopy pullback of general $A_n$-maps in our setting.
To avoid this difficulty, we replace the Lie group $G$ by a convenient one.
\begin{prp}
Let $G$ be a compact connected Lie group.
For a partition $\mathcal{P}\sqcup \mathcal{P}'$ of the set of primes, there exist a $\mathcal{P}$-localization $G_{\mathcal{P}}$, a $\mathcal{P}'$-localization $G_{\mathcal{P}'}$ and a rationalization $G_{(0)}$ of $G$ such that they are topological groups and a rationalization $\rho :G_{\mathcal{P}}\to G_{(0)}$ and $\rho ':G_{\mathcal{P}'}\to G_{(0)}$ can be taken to be homomorphisms as well as Hurewicz fibrations.
\end{prp}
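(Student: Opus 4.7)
The plan is to first realize $G_{\mathcal{P}}$, $G_{\mathcal{P}'}$, and $G_{(0)}$ as honest topological groups together with canonical comparison homomorphisms, and then apply the path-space replacement in its strict form (Remark \ref{replace}) to promote the rationalization maps to Hurewicz fibrations while keeping them homomorphisms.

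For the first step, since $G$ is a compact connected Lie group, $BG$ is simply connected, and Kan's loop-group construction applied to $\mathrm{Sing}(BG)$ yields a simplicial group whose geometric realization is a topological group weakly equivalent to $G$. Apply a functorial Bousfield--Kan localization in the category of nilpotent simplicial groups at $\mathcal{P}$, at $\mathcal{P}'$, and at the empty set of primes; since $\pi_{*}(G)$ is finitely generated, the resulting simplicial groups have the expected localized homotopy groups. Their geometric realizations then give topological groups $G_{\mathcal{P}}$, $G_{\mathcal{P}'}$, $G_{(0)}$ of the required homotopy types, and the natural transformations from $\mathcal{P}$- and $\mathcal{P}'$-localization to rationalization supply continuous homomorphisms $\rho_{0}:G_{\mathcal{P}}\to G_{(0)}$ and $\rho_{0}':G_{\mathcal{P}'}\to G_{(0)}$ that realize the rationalization on the underlying spaces.

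Finally, I would apply the construction of Proposition \ref{replacement} to $\rho_{0}$: by Remark \ref{replace}, the total space $\widetilde{G_{\mathcal{P}}}$ of the standard path-space fibration inherits a natural topological group structure, and the replacement map $\tilde\rho_{0}:\widetilde{G_{\mathcal{P}}}\to G_{(0)}$ is simultaneously a continuous homomorphism and a Hurewicz fibration, with $\widetilde{G_{\mathcal{P}}}\to G_{\mathcal{P}}$ a homotopy equivalence (so $\widetilde{G_{\mathcal{P}}}$ is still a $\mathcal{P}$-local topological group). Renaming $\widetilde{G_{\mathcal{P}}}$ as $G_{\mathcal{P}}$ and doing the same for $\rho_{0}'$ then yields the desired pair of rationalization fibration homomorphisms $\rho$ and $\rho'$.

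The main obstacle is the first step, namely the strict rectification of a localized $A_{\infty}$-structure (which is all that Proposition \ref{localizationofA_n} provides) to a genuine topological group structure together with strict comparison homomorphisms. Once the simplicial-group model above is in place and one checks that Bousfield--Kan localization applies to the relevant nilpotent simplicial groups with finitely generated homotopy, the fibration upgrade via the path-space construction of Remark \ref{replace} is entirely routine.
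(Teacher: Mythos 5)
Your overall strategy --- deloop to $BG$, produce strict group models by a loop-group construction, and then upgrade the rationalization homomorphisms to Hurewicz fibrations via the path-space replacement of Remark \ref{replace} --- is the same as the paper's, and that last step is handled correctly. The gap is in the middle step, which you yourself flag as ``the main obstacle'' but then do not actually close. A ``functorial Bousfield--Kan localization in the category of nilpotent simplicial groups'' is not an off-the-shelf tool: the Bousfield--Kan $R$-completion is built from the free $R$-module functor, which does not preserve finite products strictly, so it does not carry group objects to group objects; and the Kan loop group $G\,\mathrm{Sing}(BG)$ is levelwise a free group, hence not levelwise nilpotent, so one cannot localize it by applying the functorial localization of nilpotent discrete groups in each simplicial degree either. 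As written, the existence of strict group models $G_{\mathcal{P}}$, $G_{\mathcal{P}'}$, $G_{(0)}$ together with strict comparison homomorphisms is asserted rather than proved.

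The repair is to reverse the order of localization and looping, which is exactly what the paper does: since $BG$ is simply connected, take its $\mathcal{P}$-, $\mathcal{P}'$- and rational localizations as countable simplicial complexes, with the rationalization maps $(BG)_{\mathcal{P}}\to(BG)_{(0)}$ and $(BG)_{\mathcal{P}'}\to(BG)_{(0)}$ realized as simplicial maps, and only then apply Milnor's simplicial loop-group construction, which is functorial for simplicial maps and outputs topological groups and continuous homomorphisms with the correct localized homotopy types. After that, Remark \ref{replace} finishes the argument exactly as you describe.
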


\begin{proof}
We may take the localized classifying spaces $(BG)_{\mathcal{P}}$, $(BG)_{\mathcal{P}'}$ and $(BG)_{(0)}$ as countable simplicial complexes.
Take rationalizations $\bar \rho :(BG)_{\mathcal{P}}\to (BG)_{(0)}$ and $\bar \rho ':(BG)_{\mathcal{P}'}\to (BG)_{(0)}$ as simplicial maps.
Denote the Milnor's simplicial loop spaces \cite{Mil56} of them by $(G_{\mathcal{P}})_0$, $(G_{\mathcal{P}'})_0$ and $G_{(0)}$, which are topological groups with classifying spaces $(BG)_{\mathcal{P}}$, $(BG)_{\mathcal{P}'}$ and $(BG)_{(0)}$, respectively.
Then $\bar \rho $ and $\bar \rho '$ induce rationalizing homomorphisms $\rho _0:(G_{\mathcal{P}})_0\to G_{(0)}$ and $\rho '_0:(G_{\mathcal{P}'})_0\to G_{(0)}$, respectively.
As remarked in Remark \ref{replace}, $\rho _0$ and $\rho_0'$ can be replaced by Hurewicz fibrations $\rho :G_{\mathcal{P}}\to G_{(0)}$ and $\rho ':G_{\mathcal{P}'}\to G_{(0)}$ which are homomorphisms between topological groups.
\end{proof}

The topological group $\hat G$ obtained as the pullback of the diagram $G_{\mathcal{P}}\stackrel{\rho}{\longrightarrow }G_{(0)}\stackrel{\rho '}{\longleftarrow }G_{\mathcal{P}'}$ is $A_\infty $-equivalent to $G$ and the natural projections $\lambda :\hat G\to G_{\mathcal{P}}$ and $\lambda ':\hat G\to G_{\mathcal{P}}$ are localizing homomorphisms.
Then principal $G$-bundles and principal $\hat G$-bundles correspond one-to-one up to isomorphism and, considering the homotopy types of the classifying spaces, this correspondence preserves the $A_\infty$-types of gauge groups.
If $P$ is a principal $\hat G$-bundle, then the associated bundles $P_{\mathcal{P}}$, $P_{\mathcal{P}'}$ and $P_{(0)}$ induced from the homomorphisms $\hat G\to G_{\mathcal{P}}$, $\hat G\to G_{\mathcal{P}'}$ and $\hat G\to G_{(0)}$ give the corresponding fiberwise localizations $\ad P_{\mathcal{P}}$, $\ad P_{\mathcal{P}'}$ and $\ad P_{(0)}$ of $\ad P$, respectively.

Now we concentrate on our attention to the gauge groups of principal $G$-bundles over the $r$-dimensional sphere $S^r$.
Fix a pointed map $\epsilon :S^{r-1}\to G$.
For an integer $k\in \mathbb{Z}$, we denote the principal $G$-bundle with classifying map $k\epsilon$ by $P_k$.
As easily checked, if the adjoint group bundles $\ad P_k$ and $\ad P_{k'}$ are $G$-framed fiberwise $A_n$-equivalent, then one can deform the framed fiberwise $Q^nh\mathfrak{A}$-equivalence $\ad P_k\to \ad P_{k'}$ to the one $f:\ad P_k\to \ad P_{k'}$ which restricts to the isomorphism of topological groups with canonical $A_n$-form on the fiber over the basepoint since the framings of adjoint group bundles are given by isomorphisms of topological groups.
Moreover, if $\varphi :D^r\to S^r$ is the characteristic map, then we obtain the following strictly commutative diagram of topological groups and $A_n$-maps with respect to canonical composition:
\[
\xymatrix{
\varGamma (\varphi ^*\ad P_k) \ar[r] \ar[d]_-{(\varphi ^*f)_*} & \varGamma ^{\mathrm{id}}((\varphi ^*\ad P_k)|_{S^{r-1}}) \ar[d]^-{(\varphi ^*f)_*} & G \ar[l] \ar@{=}[d] \\
\varGamma (\varphi ^*\ad P_{k'}) \ar[r] & \varGamma ^{\mathrm{id}}((\varphi ^*\ad P_{k'})|_{S^{r-1}}) & G \ar[l] \\
}
\]
where $\varGamma ^{\mathrm{id}}((\varphi ^*\ad P_k)|_{S^{r-1}})$ represents the identity component of the topological group $\varGamma ((\varphi ^*\ad P_k)|_{S^{r-1}})$, $\varGamma (\varphi ^*\ad P_k)\to \varGamma ^{\mathrm{id}}((\varphi ^*\ad P_k)|_{S^{r-1}})$ and $\varGamma (\varphi ^*\ad P_{k'})\to \varGamma ^{\mathrm{id}}((\varphi ^*\ad P_{k'})|_{S^{r-1}})$ are the restrictions on the boundary $S^{r-1}$ of $D^r$, which are Hurewicz fibrations, $G\to \varGamma ^{\mathrm{id}}((\varphi ^*\ad P_k)|_{S^{r-1}})$ and $G\to \varGamma ^{\mathrm{id}}((\varphi ^*\ad P_{k'})|_{S^{r-1}})$ are the inclusions through the framings of the fiber over the basepoint and $\varGamma (\varphi ^*\ad P_k)\to \varGamma (\varphi ^*\ad P_{k'})$ and $\varGamma ((\varphi ^*\ad P_k)|_{S^{r-1}})\to \varGamma ((\varphi ^*\ad P_{k'})|_{S^{r-1}})$ are the induced $A_n$-equivalence from the framed fiberwise $Q^nh\mathfrak{A}$-equivalence $\varphi ^*f:\varphi ^*\ad P_k\to \varphi ^*\ad P_{k'}$.
The gauge groups $\mathcal{G}(P_k)$ and $\mathcal{G}(P_{k'})$ are isomorphic to the pullback of the corresponding horizontal lines.
Since $\varphi ^*P_k$ and $\varphi ^*P_{k'}$ are trivial bundles, the following diagram is equivalent to the above one,
\[
	\xymatrix{
		\Map (D^r,G) \ar[r] \ar@{=}[d] & \Map (S^{r-1},G;0) \ar[d]^-{F} & G \ar[l]_-{\alpha _k} \ar@{=}[d] \\
		\Map (D^r,G) \ar[r]  & \Map (S^{r-1},G;0) & G \ar[l]_-{\alpha _{k'}} \\
	}
\]
where $\Map (S^{r-1},G;0)$ denotes the identity component of the topological group $\Map (S^{r-1},G)$.
The map $\alpha _k:G\to \Map (S^{r-1},G;0)$ is defined as
\begin{align*}
	\alpha _k(g)(s)=(k\epsilon )(s)g(k\epsilon )(s)^{-1}
\end{align*}
for $g\in G$ and $s\in S^{r-1}$.
The right square commutes strictly and the left one up to homotopy of $A_n$-maps.
Denote this homotopy by $H$.
Since $F$ is induced from the framed fiberwise $Q^nh\mathfrak{A}$-equivalence $f$, the canonical composite of $H$ and the evaluation map $\Map(S^{r-1},G;0)\to G$ is the stationary homotopy of the evaluation map $\Map (D^r,G)\to G$.
Now, we give the proof of our main theorem.

\begin{proof}[Proof of Theorem \ref{thm}]
Let $\mathcal{P}$ be the set of primes which divide $N$ and $\mathcal{P}'$ the set of the other primes.
As above, we may assume that the structure group is $\hat G$.
Denote the localizing homomorphisms by $\lambda :\hat G\to G_{\mathcal{P}}$, $\lambda':\hat G\to G_{\mathcal{P}'}$, $\rho :G_{\mathcal{P}}\to G_{(0)}$ and $\rho ':G_{\mathcal{P}'}\to G_{(0)}$, where $\rho\circ\lambda =\rho'\circ\lambda'$.
So, in this proof, we denote the principal $\hat G$-bundle corresponding to the original principal $G$-bundle $P_k$ by the same symbol $P_k$, and so on.
Then, since $(N,k)=(N,k')$, there exists an integer $A\in \mathbb{Z}$ prime to $N$ such that $Ak\equiv k'\, {\rm mod}\, N$.
From Theorem \ref{thm_cls_fr_fw_A_n}, $\ad P_{Ak}$ and $\ad P_{k'}$ are framed fiberwise $Q^nh\mathfrak{A}$-equivalent.
Thus we have the following diagram:
\begin{align}
	\xymatrix{
		\Map (D^r,G_{\mathcal{P}}) \ar[r] \ar@{=}[d] & \Map (S^{r-1},G_{\mathcal{P}};0) \ar[d]^-{A^*} & G_{\mathcal{P}} \ar[l]_-{\alpha _k} \ar@{=}[d] \\
		\Map (D^r,G_{\mathcal{P}}) \ar[r] \ar@{=}[d] & \Map (S^{r-1},G_{\mathcal{P}};0) \ar[d]^-{F} & G_{\mathcal{P}} \ar[l]_-{\alpha _{Ak}} \ar@{=}[d] \\
		\Map (D^r,G_{\mathcal{P}}) \ar[r] & \Map (S^{r-1},G_{\mathcal{P}};0) & G_{\mathcal{P}} \ar[l]_-{\alpha _{k'}} \\
	} \tag{*}
\end{align}
where $F$ is an $A_n$-equivalence induced from the framed fiberwise $Q^nh\mathfrak{A}$-equivalence $\ad P_{Ak}\to \ad P_{k'}$ which restricts to an isomorphism on the fiber over the basepoint, the right squares commute strictly and the left ones up to homotopies such that the canonical composite of them with the evaluation map $\Map(S^{r-1},G_{\mathcal{P}};0)\to G_{\mathcal{P}}$ are stationary homotopy.
We remark that the map $A^*:\Map (S^{r-1},G_{\mathcal{P}};0)\to \Map (S^{r-1},G_{\mathcal{P}};0)$ is an $A_\infty $-equivalence since $A$ is not divided by any prime in $\mathcal{P}$.
Similarly, since the fiberwise $\mathcal{P}'$-localizations of $\ad P_{k}$ and $\ad P_{k'}$ are $A_n$-trivial by Proposition \ref{A_nfiberwiselocalization}, we obtain the following diagram:
\begin{align}
	\xymatrix{
		\Map (D^r,G_{\mathcal{P}'}) \ar[r] \ar@{=}[d] & \Map (S^{r-1},G_{\mathcal{P}'};0) \ar[d]^-{F'} & G_{\mathcal{P}'} \ar[l]_-{\alpha _{k}} \ar@{=}[d] \\
		\Map (D^r,G_{\mathcal{P}'}) \ar[r] & \Map (S^{r-1},G_{\mathcal{P}'};0) & G_{\mathcal{P}'} \ar[l]_-{\alpha _{k'}} \\
	} \tag{**}
\end{align}
where $F'$ is an $A_n$-equivalence, the right square commutes strictly and the left one commutes up to homotopy with the same property as stated above.
Note that the evaluation map $\Map (S^{r-1},G_{(0)};0)\to G_{(0)}$ is a homotopy equivalence because the homotopy fiber $\Omega ^{r-1}_0G_{(0)}$ of it is contractible since $r\geq 2n_\ell$.
Then, since the pullback of the diagram
\begin{align*}
\Map (S^{r-1},G_{\mathcal{P}};0)\xrightarrow{\rho _*}\Map (S^{r-1},G_{(0)};0)\xleftarrow{\rho '_*}\Map (S^{r-1},G_{\mathcal{P}'};0)
\end{align*}
is $\Map (S^{r-1},\hat G;0)$, there exists a lift $\hat F:\Map (S^{r-1},\hat G;0)\to \Map (S^{r-1},\hat G;0)$, which is an $A_n$-equivalence, of $F\circ A^*\circ \lambda _*:\Map (S^{r-1},\hat G;0)\to \Map (S^{r-1},G_{\mathcal{P}};0)$ and $F'\circ \lambda _*':\Map (S^{r-1},\hat G;0)\to \Map (S^{r-1},G_{\mathcal{P}'};0)$ by Theorem \ref{homomorphismpullback}.
This lift is determined uniquely up to homotopy by Theorem \ref{homomorphismpullback2} taking the homotopy between $\rho _*\circ F\circ A^*\circ \lambda _*$ and $\rho _*'\circ F'\circ \lambda _*'$ as the lift of the stationary homotopy of  $ev\circ \rho _*\circ \lambda _*:\Map (S^{r-1},\hat G;0)\to G_{(0)}$ through the homotopy equivalence $ev:\Map (S^{r-1},G_{(0)})\to G_{(0)}$.
%This can be done because $ev:\Map (S^{r-1},G_{(0)})\to G_{(0)}$ is not only a homotopy equivalence but also a homomorphism.
Now we have the following diagram:
\[
	\xymatrix{
		\Map (D^r,\hat G) \ar[r] \ar@{=}[d] & \Map (S^{r-1},\hat G;0) \ar[d]^-{\hat F} & \hat G \ar[l]_-{\alpha _{k}} \ar@{=}[d] \\
		\Map (D^r,\hat G) \ar[r] & \Map (S^{r-1},\hat G;0) & \hat G \ar[l]_-{\alpha _{k'}} \\
	}
\]
Again, using Theorem \ref{homomorphismpullback} and \ref{homomorphismpullback2}, this diagram commutes up to homotopy of $A_n$-maps.
Therefore, taking the pullback along the horizontal direction, $\mathcal{G}(P_k)$ and $\mathcal{G}(P_{k'})$ are $A_n$-equivalent by Theorem \ref{homomorphismpullback}. 
\end{proof}

\section{$A_n$-types of gauge groups of principal $\SU(2)$-bundle over $S^4$}\label{section_classification}
In this section, we consider the gauge groups of principal $\SU(2)$-bundles over $S^4$.
Denote the principal $\SU(2)$-bundle over $S^4$ of the second Chern number $k\in\mathbb{Z}$ by $P_k$, where this notation is compatible with the notation in Theorem \ref{thm} for the appropriate generator $\epsilon \in \pi _3(\SU (2))\cong \mathbb{Z}$.
We will denote the fiberwise $p$-localization of $\ad P_k$ by $\ad P_{k,p}$.
Let $a_n^{\mathrm{fw}}$ be the least positive integer such that the adjoint group bundle $\ad P_{a_n^{\rm fw}}$ is $A_n$-trivial.
In this section, we will determine $a_n^{\mathrm{fw}}$ up to power of $2$.
By Proposition \ref{A_nfiberwiselocalization}, $a_n^{\rm fw}$ is the product of the order of the classifying maps of $\ad P_{1,p}$ in $\pi_4(M_n(\SU(2)_{(p)}))\cong\pi_4(M_n(\SU(2)))_{(p)}$ for all prime $p$.
From the result of \cite{CS00}, we already have $a_1^{\rm fw}=12$ and $a_2^{\rm fw}=180$.

By the result of Kishimoto and Kono \cite{KK10}, the adjoint bundle $\ad P_{k,p}$ is $A_n$-trivial if and only if there exists a map $S^4_{(p)}\times\mathbb{H}P^n_{(p)}\to\mathbb{H}P^\infty_{(p)}$ that restricts to the map $(k,i):S^4_{(p)}\vee\mathbb{H}P^n_{(p)}\to\mathbb{H}P^\infty_{(p)}$, where $i:\mathbb{H}P^n_{(p)}\to\mathbb{H}P^\infty_{(p)}$ is the inclusion.

\begin{prp}\label{oddtriviality}
Suppose $p$ is an odd prime.
\begin{enumerate}
\item The adjoint bundle $\ad P_{k,p}$ is $A_{(p-1)/2-1}$-trivial.
\item If the adjoint group bundle $\ad P_{k,p}$ is $A_n$-trivial (suppose nothing if $n=0$), then $\ad P_{kp,p}$ is $A_{n+(p-1)/2}$-trivial.
\end{enumerate}
\end{prp}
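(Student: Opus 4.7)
Both parts will be derived from the Kishimoto--Kono criterion quoted just before the statement: $\ad P_{k,p}$ is $A_n$-trivial if and only if $(k,i):S^4_{(p)}\vee \mathbb{H}P^n_{(p)}\to\mathbb{H}P^\infty_{(p)}$ extends over $S^4_{(p)}\times\mathbb{H}P^n_{(p)}$. Throughout we will use $\pi_d(\mathbb{H}P^\infty)_{(p)}=\pi_{d-1}(S^3)_{(p)}$.

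For part (1), take $m=(p-3)/2$. The relative cells of the pair $(S^4\times\mathbb{H}P^m,\,S^4\vee\mathbb{H}P^m)$ are the smash cells of dimensions $4(j+1)$ for $j=1,\ldots,m$, so the obstructions to extending $(k,i)$ lie in $\pi_{4j+2}(S^3)_{(p)}$ in degrees at most $2p-4$. Serre's theorem ($\pi_i(S^3)_{(p)}=0$ for $4\le i\le 2p-1$) kills them all, giving the extension for every $k$.

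For part (2), let $c^{(m)}\in\pi_4(M_m(\SU(2)))_{(p)}$ denote the class of $\ad P_1$, so that $\ad P_{k,p}$ is $A_m$-trivial if and only if $k\cdot c^{(m)}=0$. The hypothesis is $k\cdot c^{(n)}=0$; we want $kp\cdot c^{(n+(p-1)/2)}=p\cdot(k\cdot c^{(n+(p-1)/2)})=0$. Since the projection $\pi_4(M_{n+(p-1)/2})_{(p)}\to\pi_4(M_n)_{(p)}$ carries $k\cdot c^{(n+(p-1)/2)}$ to $k\cdot c^{(n)}=0$, it suffices to show that the kernel $K$ of this projection has exponent $p$. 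The fibrations $\Omega _0^{m-2}\Map _0(\SU(2)^{\wedge m},\SU(2))\to M_m(\SU(2))\to M_{m-1}(\SU(2))$ contribute, at the level of $\pi_4$, a fiber isomorphic to $\pi_{4m+2}(S^3)_{(p)}$ for each $m\in\{n+1,\ldots,n+(p-1)/2\}$. By Toda's description of the $\alpha$-family together with Serre's theorem, the non-trivial $p$-local classes of $\pi_*(S^3)$ at degrees $\equiv 2\pmod 4$ are spaced at least $2(p-1)$ apart, while our window of $(p-1)/2$ degrees spans length only $2p-4<2(p-1)$; hence at most one of the fibers is non-trivial, and any non-trivial one is $\mathbb{Z}/p$. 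Consequently $K$ has exponent $p$, and $p\cdot(k\cdot c^{(n+(p-1)/2)})=0$ follows.

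The main technical obstacle is the sparse-spacing input: verifying that the $p$-local classes of $\pi_*(S^3)$ at degrees $\equiv 2\pmod 4$ really satisfy the spacing $\ge 2(p-1)$ in every relevant range. The $\alpha$-family gives this bound directly from Toda's formulas, but higher-order families ($\beta$, etc.) appearing at larger degrees may demand a more refined analysis, likely invoking the Cohen--Moore--Neisendorfer exponent theorem (ensuring $p$-exponent on $\pi_*(S^3)_{(p)}$, so that even if several fibers were non-trivial they could combine only into a group of exponent $p$) to make the kernel-exponent assertion uniform in $n$.
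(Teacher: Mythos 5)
Your part (1) is correct and is essentially the paper's argument: the relative cells of $(S^4_{(p)}\times\mathbb{H}P^m_{(p)},\,S^4_{(p)}\vee\mathbb{H}P^m_{(p)})$ for $m=(p-1)/2-1$ produce obstructions in $\pi_{4j+3}(\mathbb{H}P^\infty_{(p)})\cong\pi_{4j+2}(S^3)_{(p)}$ with $4j+2\leq 2p-4$, and these groups vanish by Serre.

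Part (2), however, has a genuine gap, and it sits exactly where the real content of the proposition lies. Your reduction is fine as far as it goes: $kc^{(n+(p-1)/2)}$ lies in the kernel $K$ of $\pi_4(M_{n+(p-1)/2}(\SU(2)))_{(p)}\to\pi_4(M_n(\SU(2)))_{(p)}$, each stage of the tower contributes a subquotient of $\pi_{4m+2}(S^3)_{(p)}$, and each such group has exponent $p$ by Selick. But this only shows that $K$ has exponent dividing $p^{(p-1)/2}$: an iterated extension of groups of exponent $p$ need not have exponent $p$ (e.g.\ $\Z/p^2$), so your fallback appeal to the Selick/Cohen--Moore--Neisendorfer exponent theorem does not close the argument --- that theorem bounds the exponent of each fiber group, not of the extension. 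To get exponent exactly $p$ you need your sparseness claim, that the degrees $\equiv 2\pmod 4$ in which $\pi_*(S^3)_{(p)}$ is nonzero are spaced at least $2(p-1)$ apart. This must hold for \emph{all} $m>n$ with $n$ arbitrary; it is consistent with the $\alpha$-family in low degrees but is not a known theorem, the $p$-primary homotopy of $S^3$ being understood only in a limited range, and there is no reason to expect such sparseness to persist. So the key step is unproven and very likely false in general.

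The paper avoids any exponent count on $K$. For $n\geq(p-1)/2$ it attaches the cells of $S^4_{(p)}\times\mathbb{H}P^{n+r}_{(p)}$ one at a time, forming pushouts $X_r$ and $X_r'$ of the given extension $f$ and of $f\circ(p\times\mathrm{id})$, and shows that $\pi_{4n+4r+7}(X_r')$ modulo the image of $\pi_*(\mathbb{H}P^\infty_{(p)})$ is \emph{free} of rank one, generated both by the next attaching map $\varphi'_{r+1}$ and by the relative Whitehead product $\theta_r'=[(j_r')_*i,\bar{\varphi}_r']$; using that $p$ annihilates $\pi_{4n+4r+7}(\mathbb{H}P^\infty_{(p)})$ (Selick) one gets $\varphi'_{r+1}=c\theta_r'$ on the nose for a unit $c$, and a retraction to $\mathbb{H}P^\infty_{(p)}$ kills such a Whitehead product, so the retraction extends inductively. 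The moral is that the obstruction class generates a free $\Z_{(p)}$-summand and is not ``$p$ times something'' in any group of bounded exponent; some structural identification like the Whitehead-product one is needed, and a purely exponent-theoretic argument of the kind you propose cannot substitute for it.
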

\begin{proof}
If $\ad P_{k,p}$ is $A_n$-trivial, as noted above, there is a map
	\begin{align*}
		f:S^4_{(p)}\times\mathbb{H}P^n_{(p)}\cup *\times\mathbb{H}P^\infty_{(p)}\rightarrow\mathbb{H}P^\infty_{(p)}
	\end{align*}
which is an extension of $(k,\mathrm{id}):S^4_{(p)}\vee\mathbb{H}P^\infty_{(p)}\to\mathbb{H}P^\infty_{(p)}$.
It is well-known that there are isomorphisms
	\begin{align*}
		\pi_i(\mathbb{H}P^\infty_{(p)})\cong\pi_{i-1}(S^3_{(p)})\cong\left\{
		\begin{array}{ll}
			\Z/p\Z & (i=2p+1) \\
			0 & (\mathrm{otherwise})
		\end{array}
		\right.
	\end{align*}
for $4<i<4p-1$.
Then, by an easy observation of the obstructions to extending the map $f$, the assertion (1) follows.
Consider the commutative diagram
\begin{align*}
	\xymatrix{
		S^{2p+1}_{(p)} \ar[r] \ar[d]_{p} & S^4_{(p)}\times\mathbb{H}P^{(p-1)/2-1}_{(p)}\cup *\times\mathbb{H}P^\infty_{(p)} \ar[r]^-{f\circ(p\times\mathrm{id})} \ar[d]_-{p\times\mathrm{id}} & \mathbb{H}P^\infty_{(p)} \ar@{=}[d] \\
		S^{2p+1}_{(p)} \ar[r] & S^4_{(p)}\times\mathbb{H}P^{(p-1)/2-1}_{(p)}\cup *\times\mathbb{H}P^\infty_{(p)} \ar[r]^-{f} & \mathbb{H}P^\infty_{(p)},
	}
\end{align*}
where the left horizontal maps are the attaching map of the $p$-local cell of dimension $2p+2$ in $S^4_{(p)}\times\mathbb{H}P^{(p-1)/2}_{(p)}$.
Again, by the above computation of $\pi_i(\mathbb{H}P^\infty_{(p)})$, the obstruction to extending $f\circ(p\times\mathrm{id})$ vanishes and (2) for $n<(p-1)/2$ also follows.

From now, we assume $n\ge(p-1)/2$.
Define a space $X_r$ by the following pushout diagram
	\begin{align*}
		\xymatrix{
			S^4_{(p)}\times\mathbb{H}P^n_{(p)}\cup *\times\mathbb{H}P^\infty_{(p)} \ar[r]^-{f} \ar[d] & \mathbb{H}P^\infty_{(p)} \ar[d]^-{j_r} \\
			S^4_{(p)}\times\mathbb{H}P^{n+r}_{(p)}\cup *\times\mathbb{H}P^\infty_{(p)} \ar[r]^-{f_r} & X_r, 
		}
	\end{align*}
where the left vertical arrow is the inclusion.
There is a natural $p$-local relative cell decomposition $X_r=\mathbb{H}P^\infty_{(p)}\cup e^{4n+8}\cup e^{4n+12}\cup\cdots\cup e^{4n+4r+4}$ and we denote the attaching map of $e^{4n+4r+4}\subset X_r$ by $\varphi_r:S^{4n+4k+3}\to X_{r-1}$.
In the following, we only consider the case $r\le(p-1)/2$

Set generators $u\in H^4(S^4_{(p)};\Z_{(p)})$ and $c\in H^4(\mathbb{H}P^\infty_{(p)};\Z_{(p)})$.
The cohomology ring of $X_r$ is computed as
	\begin{align*}
		H^*(X_r;\Z_{(p)})=\Z_{(p)}[x,y]/(y^2,x^ry),
	\end{align*}
where $j_r^*x=c\in H^4(\mathbb{H}P^\infty_{(p)};\Z_{(p)})$ and $f_r^*y=u\times c^{n+1}\in H^4(S^4_{(p)}\times\mathbb{H}P^\infty_{(p)};\Z_{(p)})$.
Then, by the Serre spectral sequence, one can compute the cohomology of the homotopy fiber $R_r$ of $j_r:\mathbb{H}P^\infty_{(p)}\to X_r$ as
	\begin{align*}
		H^i(R_r;\Z_{(p)})\cong\left\{
		\begin{array}{ll}
			\Z_{(p)} & (i=4n+7\,\mathrm{or}\,4n+4r+6) \\
			0 & (\mathrm{otherwise})
		\end{array}
		\right.
	\end{align*}
for $i<8n+14$ since we assume $n\ge(p-1)/2$ and $r\le(p-1)/2$.
From this isomorphism, there is a map
	\begin{align*}
		R_r\rightarrow K(\Z_{(p)},4n+7)\times K(\Z_{(p)},4n+4r+6)
	\end{align*}
which induces an isomorphism on the cohomology of degree $<4n+2p+6$ if $r<(p-1)/2$.
Thus the homotopy groups are
	\begin{align*}
		\pi_i(R_r)\cong\left\{
		\begin{array}{ll}
			\Z_{(p)} & (i=4n+7\,\mathrm{or}\,4n+4r+6) \\
			0 & (\mathrm{otherwise})
		\end{array}
		\right.
	\end{align*}
for $i<4n+2p+4$.
The above description of $\pi_i(R_r)$ also holds for $r=(p-1)/2,i<4n+2p+4$.
From the homotopy fiber sequence $R_r\to\mathbb{H}P^\infty\to X_r$, we can compute $\pi_i(X_r)$ as follows.
\begin{itemize}
\item
When $4n+8<i<4n+4r+6$, the map $(j_r)_*:\pi_i(\mathbb{H}P^\infty_{(p)})\to\pi_i(X_r)$ is an isomorphism.
%\item
%When $r<(p-1)/2$, there is the following exact sequence:
%\begin{align*}
%	0\rightarrow\pi_{4n+4r+7}(\mathbb{H}P^\infty_{(p)})\xrightarrow{(j_r)_*}\pi_{4n+4r+7}(X_r)\rightarrow\mathbb{Z}_{(p)}\rightarrow\pi_{4n+4r+6}(\mathbb{H}P^\infty_{(p)})\xrightarrow{(j_r)_*}\pi_{4n+4r+6}(X_r)\rightarrow 0.
%\end{align*}
%By this sequence, $\pi_{4n+4r+6}(X_r)$ is a quotient of $\pi_{4n+4r+6}(\mathbb{H}P^\infty_{(p)})$ and $(j_r)_*:\pi_{4n+4r+7}(\mathbb{H}P^\infty_{(p)})\to\pi_{4n+4r+7}(X_r)$ is injective.
\item
When $r<(p-1)/2$ and $4n+4r+7<i<4n+2p+4$, the map $(j_r)_*:\pi_i(\mathbb{H}P^\infty_{(p)})\to\pi_i(X_r)$ is again an isomorphism.
Then, for $r<(p-1)/2$, there is the following short exact sequence
\begin{align*}
	0\rightarrow\pi_{4n+4r+8}(X_{r+1},X_r)\rightarrow\pi_{4n+4r+7}(X_r)\rightarrow\pi_{4n+4r+7}(X_{r+1})\rightarrow 0
\end{align*}
since there is an isomorphism $\pi_{4n+4r+7}(X_{r+1},X_r)=0$, $\pi_{4n+4r+8}(X_{r+1},X_r)\cong\mathbb{Z}_{(p)}$ and $\pi_{4n+4r+8}(X_{r+1})$ is a finite group.
We have a canonical section $\pi_{4n+4r+7}(X_{r+1})\cong\pi_{4n+4r+7}(\mathbb{H}P^\infty_{(p)})\to\pi_{4n+4r+7}(X_r)$ of it and obtain the equality
\begin{align*}
	\pi_{4n+4r+7}(X_r)=(j_r)_*\pi_{4n+4r+7}(\mathbb{H}P^\infty_{(p)})\oplus\Z_{(p)}\{\varphi_{r+1}\}.
\end{align*}
\end{itemize}

We have another generator of the free part of $\pi_{4n+4r+7}(X_r)$: the relative Whitehead product $\theta_r=[(j_r)_*i,\bar{\varphi}_r]\in\pi_{4n+4r+7}(X_r)$ of the inclusion $j_r\circ i:S^4\to X_r$ and the characteristic map $\bar{\varphi}_r:(D^{4n+4r+4},S^{4n+4r+3})\to(X_r,X_{r-1})$ of $e^{4n+4r+4}\subset X_r$ has infinite order and is not divided by $p$ for $r<(p-1)/2$.
This follows from the similar computation on $\pi_i(X_r\cup_{\theta_r}D^{4n+4r+8})$ and the exact sequence
\begin{align*}
	0\rightarrow\pi_{4n+4r+8}(X_r\cup_{\theta_r}D^{4n+4r+8},X_r)\rightarrow\pi_{4n+4r+7}(X_r)\rightarrow\pi_{4n+4r+7}(X_r\cup_{\theta_r}D^{4n+4r+8})\rightarrow 0.
\end{align*}

Consider the next similar pushout diagram
\begin{align*}
		\xymatrix{
			S^4_{(p)}\times\mathbb{H}P^n_{(p)}\cup *\times\mathbb{H}P^\infty_{(p)} \ar[r]^-{f\circ(p\times\mathrm{id})} \ar[d] & \mathbb{H}P^\infty_{(p)} \ar[d]^-{j_r'} \\
			S^4_{(p)}\times\mathbb{H}P^{n+r}_{(p)}\cup *\times\mathbb{H}P^\infty_{(p)} \ar[r]^-{f_r'} & X_r'.
		}
	\end{align*}
and denote the attaching map of the cell $(e')^{4n+4r+4}\subset X_r'$ by $\varphi_r':S^{4n+4r+3}\to X_{r-1}'$.
There is a natural map $\chi_r:X_r'\to X_r$ such that the diagram
	\begin{align*}
		\xymatrix{
			S^{4n+4r+3} \ar[r]^-{\varphi_r'} \ar[d]_-{p} & X_{r-1}' \ar[d]^-{\chi_r} & \mathbb{H}P^\infty_{(p)} \ar@{=}[d] \ar[l]_-{j_r'} \\
			S^{4n+4r+3} \ar[r]^-{\varphi_r} & X_{r-1} & \mathbb{H}P^\infty_{(p)} \ar[l]_-{j_r}
		}
	\end{align*}
commutes, which maps the characteristic map $\bar{\varphi}_r'\in\pi_{4n+4r+4}(X_r',X_{r-1}')$ of the cell $(e')^{4n+4r+4}\subset X_r'$ to $p\bar{\varphi}_r\in\pi_{4n+4r+4}(X_r,X_{r-1})$ and hence $\theta_r'=[(j_r')_*i,\bar{\varphi}_r']\in\pi_{4n+4r+7}(X_r')$ to $p\theta_r\in\pi_{4n+4r+7}(X_r)$.
Note the equality
	\begin{align*}
		\pi_{4n+4r+7}(X'_r)/(j'_r)_*\pi_{4n+4r+7}(\mathbb{H}P^\infty_{(p)})=\Z_{(p)}\{\varphi'_{r+1}\}=\Z_{(p)}\{\theta'_r\}.
	\end{align*}
Then there is a unit $c\in\Z_{(p)}^\times$ such that $\varphi_{r+1}'-c\theta_r'\in (j_r')_*\pi_{4n+4r+7}(\mathbb{H}P^\infty_{(p)})$.
From this, we have
\begin{align*}
	(\chi_r)_*(\varphi_{r+1}'-c\theta_r')=p\varphi_{r+1}-cp\theta_r=0
\end{align*}
because $\pi_{4n+4r+7}(\mathbb{H}P^\infty_{(p)})$ is annihilated by $p$ (the result of Selick \cite{Sel78}).
The injectivity of $(j_r)_*$ implies $\varphi_{r+1}'=c\theta_r'$ in $\pi_{4n+4r+7}(X_r')$.

Using the above computations, we shall construct a retraction $\rho_r:X_r'\to\mathbb{H}P^\infty_{(p)}$ by an induction on $r\le(p-1)/2$.
When $r=1$, the attaching map of $(e')^{4n+4r+8}\subset X_1'$ is null-homotopic as $\varphi_1'=p\varphi_1=0\in\pi_{4n+4r+7}(\mathbb{H}P^\infty_{(p)})$.
Thus a retraction $\rho_1:X_1'\to\mathbb{H}P^\infty_{(p)}$ exists.
Assume that there is a retraction $\rho_r:X_r'\to\mathbb{H}P^\infty_{(p)}$ for some $r<(p-1)/2$.
Then we have $(\rho_r)_*(\bar{\varphi}_r')\in\pi_{4n+4r+4}(\mathbb{H}P^\infty_{(p)},\mathbb{H}P^\infty_{(p)})=0$ and
	\begin{align*}
		(\rho_r)_*(\varphi_{r+1}')=(\rho_r)_*(c\theta_r')=c(\rho_r)_*([(j_r')_*i,\bar{\varphi}_r'])=c[i,(\rho_r)_*(\bar{\varphi}_r')]=0.
	\end{align*}
Hence there is a retraction $\rho_{r+1}:X_{r+1}'\to\mathbb{H}P^\infty_{(p)}$.

Therefore, the composition $\rho_r\circ f_r':S^4_{(p)}\times\mathbb{H}P^\infty_{(p)}\to\mathbb{H}P^\infty_{(p)}$ is an extension of $f\circ(p\times\mathrm{id})$.
\end{proof}
In the proof of the next corollary, we quote the result of the author \cite{Tsu12b}.
The author computed the divisibility of the invariant $\epsilon_n\in\mathbb{Q}$ defined by Tsukuda \cite{Tsu01}, which has the following property: if $\ad P_{k,p}$ is $A_n$-trivial, then $k\epsilon_n\in\mathbb{Z}_{(p)}$.
But the author must apologize for that the description of the main result \cite[Theorem4.5]{Tsu12b} is \textbf{not correct} because of a very elementary error of counting.
Nevertheless, the essential computation \cite[Proposition 4.2, 4.4]{Tsu12b} is correct.
\begin{cor}\label{odddivisibility}
For the integer $a_n^{\mathrm{fw}}$ defined above and an odd prime $p$, we have the $p$-exponent as
\begin{align*}
	v_p(a_n^{\mathrm{fw}})=\left[\frac{2n}{p-1}\right],
\end{align*}
where $[m]$ denotes the largest integer not greater than $m$.
\end{cor}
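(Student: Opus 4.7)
The strategy is to establish matching upper and lower bounds on $v_p(a_n^{\mathrm{fw}})$. By Proposition \ref{A_nfiberwiselocalization}, this exponent equals the $p$-valuation of the order of the classifying map of $\ad P_{1,p}$ in $\pi_4(M_n(\SU(2)_{(p)}))$, so it suffices to pin down the least $j \geq 0$ for which $\ad P_{p^j,p}$ is $A_n$-trivial.

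For the upper bound $v_p(a_n^{\mathrm{fw}}) \leq [2n/(p-1)]$, I induct on $j \geq 0$ to show that $\ad P_{p^j,p}$ is $A_{(j+1)(p-1)/2 - 1}$-trivial. The base case $j = 0$ is Proposition \ref{oddtriviality}(1) with $k = 1$, and the inductive step is a direct application of Proposition \ref{oddtriviality}(2) to the previous step, which raises the $A_n$-index by $(p-1)/2$. Setting $j = [2n/(p-1)]$ and writing $2n = q(p-1) + r$ with $0 \leq r < p-1$, the claim $(j+1)(p-1)/2 - 1 \geq n$ reduces to the inequality $(j+1)(p-1) \geq 2n + 2$; since both $(j+1)(p-1)$ and $2n$ are even (as $p-1$ is even and $p$ odd), the integrality forces this provided $r \neq p-2$, and indeed $r = p-2$ is excluded because $p-2$ is odd while $2n$ is even. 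Hence $\ad P_{p^{[2n/(p-1)]},p}$ is $A_n$-trivial.

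For the lower bound $v_p(a_n^{\mathrm{fw}}) \geq [2n/(p-1)]$, I invoke Tsukuda's invariant $\epsilon_n \in \mathbb{Q}$ from \cite{Tsu01}, which has the key property that $A_n$-triviality of $\ad P_{k,p}$ implies $k \epsilon_n \in \mathbb{Z}_{(p)}$. The $p$-adic valuation of the denominator of $\epsilon_n$ is computed in \cite[Propositions 4.2 and 4.4]{Tsu12b} to be exactly $[2n/(p-1)]$; these propositions remain valid despite the acknowledged counting error in \cite[Theorem 4.5]{Tsu12b}. Consequently any $k$ with $\ad P_{k,p}$ being $A_n$-trivial must satisfy $v_p(k) \geq [2n/(p-1)]$, giving the required bound.

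The lower bound is the substantive input, but since it is imported from \cite{Tsu12b}, the real content to be written out is the clean iteration of Proposition \ref{oddtriviality} together with the small parity check $2n \not\equiv p-2 \pmod{p-1}$ that makes the exponent in the upper bound match $[2n/(p-1)]$ on the nose. I expect no further obstacle beyond carefully verifying this matching.
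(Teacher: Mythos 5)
Your proposal is correct and follows essentially the same route as the paper: the lower bound via Tsukuda's invariant and \cite[Proposition 4.4]{Tsu12b}, and the upper bound by iterating Proposition \ref{oddtriviality}. The only cosmetic differences are that the paper starts the iteration from $A_r$-triviality with $n=q(p-1)/2+r$ (landing exactly on $A_n$ without your parity check) and applies the denominator computation to $\epsilon_{q(p-1)/2}$ rather than to $\epsilon_n$ itself, which is the form in which \cite{Tsu12b} states it.
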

\begin{proof}
Take integers $q$ and $0\leq r<(p-1)/2$ as $n=q(p-1)/2+r$, where $q=[2n/(p-1)]$.
By \cite[Proposition 4.4]{Tsu12b}, we have $v_p(\epsilon_{q(p-1)/2})=-q$.
Since $v_p(a_n^{\mathrm{fw}}\epsilon_{q(p-1)/2})\geq 0$, we obtain
\begin{align*}
	v_p(a_n^{\mathrm{fw}})\geq q.
\end{align*}
By Proposition \ref{oddtriviality} (1), $\ad P_{1,p}$ is $A_r$-trivial.
Then, by using the Proposition \ref{oddtriviality} (2) repeatedly, $\ad P_{p^q,p}$ is $A_n$-trivial.
Therefore, by the definition of $a_n^{\mathrm{fw}}$, we have
\begin{align*}
	v_p(a_n^{\mathrm{fw}})\leq q.
\end{align*}
Finally, we conclude $v_p(a_n^{\mathrm{fw}})=q=[2n/(p-1)]$.
\end{proof}

\begin{rem}\label{2divisibility}
For $p=2$, by a similar observation of the obstruction to extending the map $S^4_{(2)}\vee\mathbb{H}P^n_{(2)}\to\mathbb{H}P^\infty_{(2)}$, we obtain
\begin{align*}
	n\leq v_2(a_n^{\mathrm{fw}})\leq 2n.
\end{align*}
The left hand side follows from \cite[Proposition 4.2]{Tsu12b}.
The right hand side follows from the fact that $4$ annihilates the $2$-component of the homotopy groups of $S^3$ by the result of James \cite[Corollay (1.22)]{Jam57}.
\end{rem}

We close this paper by proving Theorem \ref{SU(2)}.
\begin{proof}[Proof of Theorem \ref{SU(2)}]
The if part is a consequence of Corollay \ref{odddivisibility}, Remark \ref{2divisibility} and Theorem \ref{thm}.
The converse is shown as follows.
Suppose that the gauge groups $\mathcal{G}(P_k)$ and $\mathcal{G}(P_{k'})$ are $A_n$-equivalent and $v_2(k)\leq 1$.
By \cite[Proposition 10.2]{Tsu12a} and Corollay \ref{odddivisibility}, the equality $v_p(k)=v_p(k')$ must hold for any odd prime $p$.
Moreover, $v_2(k)=v_2(k')$ follows from the Kono's result \cite{Kon91} and $v_2(k)\leq 1$.
\end{proof}

\end{document}